\theoremstyle{plain}
\newtheorem{theorem}{Theorem}[section]
\newtheorem{lemma}[theorem]{Lemma}
\newtheorem{proposition}[theorem]{Proposition}
\theoremstyle{definition}
\newtheorem{definition}[theorem]{Definition}
\newtheorem{remark}[theorem]{Remark}
\numberwithin{equation}{section}
\newtheorem*{theorem*}{Theorem}
\newcommand{\R}{{\mathbb R}}
\def\Xint#1{\mathchoice
{\XXint\displaystyle\textstyle{#1}}
{\XXint\textstyle\scriptstyle{#1}}
{\XXint\scriptstyle\scriptscriptstyle{#1}}
{\XXint\scriptscriptstyle\scriptscriptstyle{#1}}
\!\int}
\def\XXint#1#2#3{{\setbox0=\hbox{$#1{#2#3}{\int}$}
\vcenter{\hbox{$#2#3$}}\kern-.5\wd0}}
\def\dashint{\Xint-}
\newcommand{\dla}{\, \mathrm{d} \lambda}
\newcommand{\loc}{{\mbox{\scriptsize{loc}}}}
\DeclareMathOperator{\dive}{div}
\DeclareMathOperator{\BMO}{BMO}
\DeclareMathOperator{\PBMO}{PBMO}
\providecommand{\abs}[1]{ \lvert#1  \rvert}
\providecommand{\norm}[1]{ \lVert#1  \rVert}
\title[Parabolic weighted norm inequalities]{Parabolic weighted norm inequalities and partial differential equations}
\author{Juha Kinnunen and Olli Saari}
\address{Department of Mathematics, Aalto University, P.O. Box 11100, 
FI-00076 Aalto University, Finland}
\email{juha.k.kinnunen@aalto.fi, olli.saari@aalto.fi}
\thanks{The research is supported by the Academy of Finland and the V\"ais\"al\"a Foundation.}
\subjclass[2010]{42B25, 42B37, 35K55}
\keywords{Parabolic BMO, weighted norm inequalities, parabolic PDE, doubly nonlinear equations, one-sided weight.}
\begin{document}
\begin{abstract}
We introduce a class of weights related to the regularity theory of nonlinear parabolic partial differential equations. 
In particular, we investigate connections of the parabolic Muckenhoupt weights to the parabolic $\BMO$.
The parabolic Muckenhoupt weights need not be doubling and they may grow arbitrarily fast in the time variable.
Our main result characterizes them through weak and strong type weighted norm inequalities for forward-in-time maximal operators. In addition, we prove a Jones type factorization result for the parabolic Muckenhoupt weights and a 
Coifman-Rochberg type characterization of the parabolic $\BMO$ through maximal functions.
Connections and applications to the doubly nonlinear parabolic PDE are also discussed.
\end{abstract}

\maketitle

\section{Introduction}

Muckenhoupt's seminal result characterizes weighted norm inequalities for the Hardy-Littlewood maximal operator through the so called $A_p$ condition
\[ \sup_{Q} \dashint_{Q} w \left( \dashint_{Q} w^{1-p'} \right)^{p-1} < \infty, \quad 1<p< \infty . \]
Here the supremum is taken over all cubes $Q \subset \mathbb{R}^{n}$,  and $w \in L^{1}_{loc}(\mathbb{R}^{n})$ is a nonnegative weight. These weights exhibit many properties that are powerful in applications, such as reverse H\"older inequalities, factorization property, and characterizability through $\BMO$, where $\BMO$ refers to the functions of bounded mean oscillation. 
Moreover, the Muckenhoupt weights play a significant role in the theory of Calder\'on-Zygmund singular integral operators, see \cite{GR1985}. 

Another important aspect of the Muckenhoupt weights and $\BMO$ is that they also arise in the regularity theory of nonlinear PDEs. 
More precisely, the logarithm of a nonnegative solution to any PDE of the type
\[
\dive (\abs{ \nabla u}^{p-2} \nabla u) =0, \quad1<p<\infty,
\]
belongs to $\BMO$ and the solution itself is a Muckenhoupt weight. This was the crucial observation in \cite{Moser1961}, where Moser proved the celebrated Harnack inequality for nonnegative solutions of such equations.

Even though the theory of the Muckenhoupt weights is well established by now, many questions related to higher dimensional versions of the one-sided Muckenhoupt condition
\[\sup_{x \in \mathbb{R}, h > 0} \frac{1}{h} \int_{x-h}^{x} w \left( \frac{1}{h} \int_{x}^{x+h} w^{1-p'} \right)^{p-1} < \infty  \]
remain open. This condition was introduced by Sawyer \cite{Sawyer1986} in connection with ergodic theory. Since then these weights and the one-sided maximal functions have been a subject of intense research; see \cite{AC1998}, \cite{AFMR1997}, \cite{CUNO1995}, \cite{Martin1993}, \cite{MOST1990}, \cite{MRPT1993}, \cite{MRT1992}, \cite{MRT1994} and \cite{Sawyer1986}. In comparison with the classical $A_p$ weights, the one-sided $A_p^{+}$ weights can be quite general. 
For example, they may grow exponentially, since any increasing function belongs to  $A_p^{+}$. 
It is remarkable that this class of weights still allows for weighted norm inequalities for some special classes of singular integral operators (see \cite{AFMR1997}), but the methods are limited to the dimension one.

The first extensions to the higher dimensions of the one-sided weights are by Ombrosi \cite{Ombrosi2005}. The subsequent research in \cite{Berkovits2011}, \cite{FMRO2011} and \cite{LO2010} contains many significant advances, but even in the plane many of the most important questions, such as getting the full characterization of the strong type weighted norm inequalities for the corresponding maximal functions, have not received satisfactory answers yet.

In this paper, we propose a new approach which enables us to solve many of the previously unreachable problems. In contrast with the earlier attempts, our point of view is related to Moser's work on the parabolic Harnack inequality in \cite{Moser1964} and  \cite{Moser1967}. 
More precisely, in the regularity theory for the doubly nonlinear parabolic PDEs of the type
\begin{equation}
\label{intro:equation}
\frac{\partial(|u|^{p-2}u)}{\partial t} - \dive (\abs{\nabla u}^{p-2} \nabla u)=0,  \quad1<p<\infty,
\end{equation}
(see \cite{GV2006}, \cite{KK2007}, \cite{KSU}, \cite{Trudinger1968}), there is a condition (Definition \ref{def:MAq}) that plays a role identical to that of the classical Muckenhoupt condition in the corresponding elliptic theory. 
Starting from the parabolic Muckenhoupt condition
\begin{equation}\label{intro:muck}
\sup_{R}  \dashint_{R ^{-}} w  \left( \dashint_{R  ^{+}} w^{1-q'}   \right)^{q-1}  < \infty,
\quad 1<q<\infty,
\end{equation}
where $R^{\pm}$ are space time rectangles with a time lag, we create a theory of parabolic weights. Here we use $q$ to distinguish from $p$ in the doubly nonlinear equation.
Indeed, they are not related to each other.

The time variable scales as the modulus of the space variable raised to the power $p$ in the geometry natural for \eqref{intro:equation}. Consequently, the Euclidean balls and cubes have to be replaced by parabolic rectangles respecting this scaling in all estimates. In order to generalize the one-sided theory of weighted norm inequalities, it would be sufficient to work with the case $p = 2$.
However, in view of the connections to nonlinear PDEs (see \cite{Saari2014} and \cite{KS2016}), we have decided to develop a general theory for $1<p<\infty$. As far as we know, the results in this work are new even for the heat equation with $p=2$. There are no previous studies about weighted norm inequalities with the same optimal relation to solutions of parabolic partial differential equations. 

Observe that the theory of parabolic weights contains the classical $A_p$ theory as a special case. However, the difference between elliptic and parabolic weights is not only a question of switching from cubes to parabolic rectangles.
There is an extra challenge in the regularity theory of \eqref{intro:equation} because of the time lag appearing in the estimates. 
%In the context of weights, the lag is a source of behaviour typical to one-sided weights.
%As pointed out in \cite{Moser1964}, the time lag is an unavoidable feature of the regularity theory rather than a mere technicality. For more information about the doubly non-linear equation, we refer to , \cite{KK2007}, \cite{KSU} , \cite{Trudinger1968} and \cite{V1992}. 
A similar phenomenon also occurs in the harmonic analysis with one-sided weights, and it has been the main obstacle in the previous approaches  \cite{Berkovits2011}, \cite{FMRO2011}, \cite{LO2010}, and \cite{Ombrosi2005}.
%A similar phenomenon also occurs in the previous approaches  \cite{Berkovits2011}, \cite{FMRO2011}, \cite{LO2010} and \cite{Ombrosi2005}; and the main obstacle has been to accommodate this time lag. 
Except for the one-dimensional case, an extra time lag appears in the arguments. Roughly speaking, a parabolic Muckenhoupt condition without a time lag implies boundedness of maximal operators with a time lag. In our approach, both the maximal operator and the Muckenhoupt condition have a time lag. This allows us to prove the necessity and sufficiency of the parabolic Muckenhoupt condition for both weak and strong type weighted norm inequalities of the corresponding maximal function. Our main technical tools are covering arguments related to the work of Ombrosi et al. \cite{Ombrosi2005}, \cite{FMRO2011};  parabolic chaining arguments from \cite{Saari2014}, and a Calder\'on-Zygmund argument based on a slicing technique.

Starting from the parabolic Muckenhoupt condition \eqref{intro:muck}, we build a complete parabolic theory of one-sided weighted norm inequalities and $\BMO$ in the multidimensional case. Our main results are a reverse H\"older inequality (Theorem \ref{thm:RHI}), strong type characterizations for weighted norm inequalities for a parabolic forward-in-time maximal function (Theorem \ref{thm:strongchar}), a Jones type factorization result for parabolic Muckenhoupt weights (Theorem \ref{thm:factorization}) and a Coifman-Rochberg type characterization of parabolic $\BMO$ through maximal functions (Theorem \ref{thm:coif-roch}). 
In Section 8, we explain in detail the connection between parabolic Muckenhoupt weights and the doubly nonlinear equation.
We refer to \cite{Aimar1988}, \cite{FG1985}, \cite{KK2007}, \cite{Moser1964}, \cite{Moser1967}, \cite{Saari2014} and \cite{Trudinger1968} for more on parabolic $\BMO$ and its applications to PDEs. 
%The main novelty of this work is in proving the characterizations of parabolic weighted norm inequalities in higher dimensions. 

\section{Notation}
Throughout the paper, the $n$ first coordinates of $\mathbb{R}^{n+1}$ will be called \textit{spatial} and the last one \textit{temporal}. The temporal translations will be important in what follows. Given a set $E \subset \mathbb{R}^{n+1}$ and $t \in \mathbb{R}$, we denote
\[E+t := \{e + (0,\ldots,0,t): e  \in E  \} .\]
The exponent $p$, with $1<p<\infty$, related to the doubly nonlinear equation \eqref{intro:equation} will be a fixed throughout the paper.

Constants $C$ without subscript will be generic and the dependencies will be clear from the context. We also write $K \lesssim 1$ for $K \leq C$ with $C$ as above. The dependencies can occasionally be indicated by subscripts or parentheses such as $K = K(n,p) \lesssim_{n,p} 1$.

A weight will always mean a real valued positive locally integrable function on $\mathbb{R}^{n+1}$. Any such function $w$ defines a measure absolutely continuous with respect to Lebesgue measure, and for any measurable $E \subset \mathbb{R}^{n+1}$, we denote 
\[w(E) := \int_{E} w .\]
We often omit mentioning that a set is assumed to be measurable. They are always assumed to be. 
For a locally integrable function $f$, the integral average is denoted as
\[\frac{1}{\abs{E}} \int_{E} f = \dashint_{E} f = f_{E}.\]
The positive part of a function $f$ is  $(f)^{+}=(f)_{+} = 1_{\{f > 0\}}f$ and the negative part  $(f)^{-}= (f)_{-} = -1_{\{f < 0\}}f$. 

\section{Parabolic Muckenhoupt weights}
Before the definition of the parabolic Muckenhoupt weights, we introduce the parabolic space-time rectangles in the natural geometry for the doubly nonlinear equation. 

\begin{definition}
\label{def:rectangles}
Let $Q(x,l) \subset \mathbb{R}^{n}$ be a cube with center $x$ and side length $l$ and sides parallel to the coordinate axes.
Let $p>1$ and $\gamma \in [0,1)$. We denote
\[
R(x,t,l )= Q(x,l) \times (t-  l^{p},t+ l^{p})
\]
and
\[
R ^{+}(\gamma) = Q(x,l) \times (t+ \gamma  l^{p},t+ l^{p}).
\]
The set $R(x,t,l)$ is called a $(x,t)$-centered parabolic rectangle with side $l$. We define $R^{-}(\gamma)$ as the reflection of $R ^{+}(\gamma)$ with respect to $\mathbb{R}^{n} \times \{t\}$. The shorthand $R^{\pm}$ will be used for $R^{\pm}(0)$.
\end{definition} 

Now we are ready for the definition of the parabolic Muckenhoupt classes. Observe that there is a time lag in the definition for $\gamma>0$.

\begin{definition}
\label{def:MAq}
Let $q > 1$ and $\gamma \in [0,1)$. A weight $w > 0$ belongs to the parabolic Muckenhoupt class $A_q^{+}(\gamma)$, if 
\begin{equation}
\label{eq:MA2}
\sup_{R} \left( \dashint_{R ^{-}(\gamma)} w   \right) \left( \dashint_{R  ^{+}(\gamma)} w^{1-q'}   \right)^{q-1} =: [w]_{A_q^{+}(\gamma)} < \infty.
\end{equation}
If the condition above is satisfied with the direction of the time axis reversed, we denote $w \in A_q^{-}(\gamma)$. 
If $\gamma$ is clear from the context or unimportant, it will be omitted in the notation.
\end{definition}

The case $A_2^{+}(\gamma)$ occurs in the regularity theory of parabolic equations, see  \cite{Moser1964} and \cite{Trudinger1968}. Before investigating the properties of  parabolic  weights, we briefly discuss how they differ from the ones already present in the literature. The weights of \cite{FMRO2011} and \cite{LO2010} were defined on the plane, and the sets $R^{\pm}(\gamma)$ in Definition \ref{def:MAq} were replaced by two squares that share exactly one corner point. The definition used in \cite{Berkovits2011} is precisely the same as our Definition \ref{def:MAq} with $p=1$ and $\gamma=0$. 
 
An elementary but useful property of the parabolic Muckenhoupt weights is that they can effectively be approximated by bounded weights. 

\begin{proposition}
\label{prop:truncation}
Assume that  $u,v \in A_q^{+}(\gamma)$. Then $f = \min \{u,v\} \in A_q^{+}(\gamma)$ and
\[[f]_{A_q^{+}} \lesssim [u]_{A_q^{+}}  + [v]_{A_q^{+}} . \]
The corresponding result holds for $\max \{u,v\}$ as well.
\end{proposition}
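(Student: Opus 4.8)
The plan is to exploit how the two quantities appearing in the $A_q^{+}(\gamma)$ functional behave under the pointwise operations $\min$ and $\max$, combined with the elementary inequality $(a+b)^{q-1}\le C(a^{q-1}+b^{q-1})$, valid for $q>1$ with $C=C(q)$. Note first that $f=\min\{u,v\}$ is again a weight, being positive and bounded above by $u$.

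Fix a parabolic rectangle $R$ and consider $f=\min\{u,v\}$. Since $f\le u$ and $f\le v$ pointwise, averaging over $R^{-}(\gamma)$ gives $\dashint_{R^{-}(\gamma)}f\le\min\{\dashint_{R^{-}(\gamma)}u,\dashint_{R^{-}(\gamma)}v\}$. For the second factor, the exponent $1-q'$ is negative because $q>1$, so $s\mapsto s^{1-q'}$ is decreasing and hence $f^{1-q'}=\max\{u^{1-q'},v^{1-q'}\}\le u^{1-q'}+v^{1-q'}$; averaging yields $\dashint_{R^{+}(\gamma)}f^{1-q'}\le\dashint_{R^{+}(\gamma)}u^{1-q'}+\dashint_{R^{+}(\gamma)}v^{1-q'}$. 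Multiplying the two bounds and applying $(a+b)^{q-1}\le C(a^{q-1}+b^{q-1})$ to the second factor, I would distribute and then discard the minimum in favour of the matching weight in each term: $\min\{\dashint_{R^{-}(\gamma)}u,\dashint_{R^{-}(\gamma)}v\}\bigl(\dashint_{R^{+}(\gamma)}u^{1-q'}\bigr)^{q-1}\le\bigl(\dashint_{R^{-}(\gamma)}u\bigr)\bigl(\dashint_{R^{+}(\gamma)}u^{1-q'}\bigr)^{q-1}\le[u]_{A_q^{+}(\gamma)}$, and likewise for $v$. Summing and taking the supremum over all $R$ gives $[f]_{A_q^{+}(\gamma)}\le C\bigl([u]_{A_q^{+}(\gamma)}+[v]_{A_q^{+}(\gamma)}\bigr)$.

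For $g=\max\{u,v\}$ the roles of the two factors are interchanged: $\dashint_{R^{-}(\gamma)}g\le\dashint_{R^{-}(\gamma)}u+\dashint_{R^{-}(\gamma)}v$, whereas $g^{1-q'}=\min\{u^{1-q'},v^{1-q'}\}$ forces $\dashint_{R^{+}(\gamma)}g^{1-q'}\le\min\{\dashint_{R^{+}(\gamma)}u^{1-q'},\dashint_{R^{+}(\gamma)}v^{1-q'}\}$. Multiplying, distributing the sum in the first factor, and pairing each summand with the correspondingly labelled weight inside the minimum, one obtains $[g]_{A_q^{+}(\gamma)}\le[u]_{A_q^{+}(\gamma)}+[v]_{A_q^{+}(\gamma)}$, this time with no constant. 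I do not expect any genuine obstacle here: the only point needing care is the direction of the inequality for $s^{1-q'}$, which is exactly where $q>1$ enters, and the argument uses nothing about the parabolic geometry beyond the fact that $R^{\pm}(\gamma)$ are fixed measurable sets.
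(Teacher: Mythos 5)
Your proof is correct and follows essentially the same route as the paper: the paper's splitting of $R^{+}(\gamma)$ into $\{u>v\}$ and $\{u\le v\}$ is just the domain-decomposition form of your pointwise bound $f^{1-q'}=\max\{u^{1-q'},v^{1-q'}\}\le u^{1-q'}+v^{1-q'}$, and both arguments then pair the minimum in the first factor with the matching weight in each summand. Your treatment of $\max\{u,v\}$ is likewise the intended ``similar manner'' argument, made explicit.
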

\begin{proof}
A direct computation gives
%%%%%%%%%%%%%%%%%%
%% 2 rivi
%& \leq \left( \dashint_{R^{-}(\gamma)} f \right) \left(\frac{1}{\abs{R^{+}(\gamma)}} \int_{R^{+}(\gamma) \cap \{u > v\}}  f^{1-q'} +\frac{1}{\abs{R^{+}(\gamma)}} \int_{R^{+}(\gamma)\cap \{u \leq v\}} f ^{1-q'} \right)^{q-1} \\
%%%%%%%%%%%%%%%%%%
\begin{align*}
 \left( \dashint_{R^{-}(\gamma)} f \right) &\left( \dashint_{R^{+}(\gamma)} f^{1-q'} \right)^{q-1} \\
& \lesssim  \left( \dashint_{R^{-}(\gamma)} f \right) \left( \frac{1}{\abs{R^{+}(\gamma)}} \int_{R^{+}(\gamma) \cap \{u > v\}}  f^{1-q'} \right)^{q-1} \\
& \hspace{2.2cm} + \left( \dashint_{R^{-}(\gamma)} f \right)\left(\frac{1}{\abs{R^{+}(\gamma)}} \int_{R^{+}(\gamma)\cap \{u \leq v\}} f ^{1-q'} \right)^{q-1} \\
&\leq  \left( \dashint_{R^{-}(\gamma)} v \right) \left( \frac{1}{\abs{R^{+}(\gamma)}} \int_{R^{+}(\gamma) \cap \{u > v\}}  v^{1-q'} \right)^{q-1} \\
& \hspace{2.2cm} + \left( \dashint_{R^{-}(\gamma)} u \right)\left( \frac{1}{\abs{R^{+}(\gamma)}} \int_{R^{+}(\gamma)\cap \{u \leq v\}} u ^{1-q'} \right)^{q-1} \\
& \leq [u]_{A_q^{+}}  + [v]_{A_q^{+}}.
\end{align*}
The result for $\max\{u,v\}$ is proved in a similar manner.
\end{proof}

\subsection{Properties of parabolic Muckenhoupt weights}
The special role of the time variable makes the parabolic Muckenhoupt weights quite different from the classical ones. For example, the doubling property does not hold, but it can be replaced by a weaker forward-in-time comparison condition. The next proposition is a collection of useful facts about the parabolic Muckenhoupt condition, the most important of which is the property that the value of $\gamma \in [0,1)$ does not play as big a role as one might guess. This is crucial in our arguments. The same phenomenon occurs later in connection with the parabolic $\BMO$. 

\begin{proposition}
\label{prop:properties}
Let $\gamma \in [0,1)$. Then the  following properties hold true.
\begin{enumerate}
\item[(i)] If $1< q < r < \infty$, then $A_q^{+}(\gamma)  \subset A_r^{+}(\gamma) $.
\item[(ii)] Let $\sigma = w^{1-q'}$. Then $\sigma $ is in $A_{q'}^{-}(\gamma)$ if and only if $w \in A_q^{+}(\gamma)$.
\item[(iii)] Let $w \in A_q^{+}(\gamma)$, $\sigma =w^{1-q'}$ and $t > 0$. Then
\[\dashint_{R^{-}(\gamma )} w \leq C_t \dashint_{t + R^{-}(\gamma )} w \quad \textrm{and} \quad \dashint_{R^{+}(\gamma )} \sigma \leq C_t \dashint_{-t + R^{+}(\gamma)} \sigma.\]  
\item[(iv)] If $w \in A_q^{+}(\gamma)$, then we may replace $R^{-}(\gamma)$ by $R^{-}(\gamma) - a$ and  $R^{+}(\gamma)$ by $R^{+}(\gamma) + b$ for any $a,b \geq 0$ in the definition of the parabolic Muckenhoupt class. The new condition is satisfied with a different constant $[w]_{A_q^{+}}$.
\item[(v)] If $1>\gamma' > \gamma$, then $A_q^{+}(\gamma) \subset A_q^{+}(\gamma')$.
\item[(vi)] Let $w \in A_q^{+}(\gamma)$. Then
\[w(R^{-}(\gamma)) \leq C \left(\frac{\abs{R^{-}(\gamma)}}{\abs{S}}  \right)^{q} w(S) \]
 for every  $S \subset R^{+}(\gamma)$.  
\item[(vii)] If $w \in A_q^{+} (\gamma)$ with some $\gamma \in [0,1)$, then $w\in A_q^{+}(\gamma')$ for all $\gamma' \in (0,1) $. 
\end{enumerate}
\end{proposition}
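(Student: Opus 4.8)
The plan is to prove the seven items in the stated order, each building on the earlier ones. Items (i), (ii) and (v) are purely formal. For (i), write $w^{1-r'}=\big(w^{1-q'}\big)^{\theta}$ with $\theta=\tfrac{r'-1}{q'-1}=\tfrac{q-1}{r-1}\in(0,1]$ (as $q\le r$) and apply Jensen's inequality on $R^{+}(\gamma)$; since $\theta(r-1)=q-1$ this gives $[w]_{A_r^{+}(\gamma)}\le[w]_{A_q^{+}(\gamma)}$. For (ii), note $(q-1)(q'-1)=1$, so $\big(w^{1-q'}\big)^{1-q}=w$; raising the quantity defining $[w]_{A_q^{+}(\gamma)}$ to the power $q'-1$ and reading off the result (the time reversal in $A_{q'}^{-}$ interchanging the roles of $R^{+}$ and $R^{-}$) gives $[\sigma]_{A_{q'}^{-}(\gamma)}=[w]_{A_q^{+}(\gamma)}^{q'-1}$, whence the equivalence. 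For (v), if $\gamma'>\gamma$ then $R^{-}(\gamma')\subset R^{-}(\gamma)$, $R^{+}(\gamma')\subset R^{+}(\gamma)$ and $|R^{\pm}(\gamma')|=\tfrac{1-\gamma'}{1-\gamma}|R^{\pm}(\gamma)|$, so each average in the definition changes by at most the factor $\tfrac{1-\gamma}{1-\gamma'}$ and $[w]_{A_q^{+}(\gamma')}\le\big(\tfrac{1-\gamma}{1-\gamma'}\big)^{q}[w]_{A_q^{+}(\gamma)}$.

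Items (iii), (iv) and (vi) come from the defining inequality combined with Jensen's and Hölder's inequalities. From $\dashint_{R^{+}(\gamma)}w^{1-q'}\ge\big(\dashint_{R^{+}(\gamma)}w\big)^{1-q'}$ (Jensen, $1-q'<0$) one gets the one-step forward comparison $\dashint_{R^{-}(\gamma)}w\le[w]_{A_q^{+}(\gamma)}\dashint_{R^{+}(\gamma)}w$; iterating it along a finite chain of overlapping parabolic rectangles marching forward in time proves the first estimate of (iii), and the second is this applied to $\sigma\in A_{q'}^{-}(\gamma)$ via (ii). Item (iv) follows by using (iii) to pass from $R^{-}(\gamma)-a$ and $R^{+}(\gamma)+b$ back to $R^{-}(\gamma)$ and $R^{+}(\gamma)$. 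For (vi), Hölder's inequality gives $|S|=\int_{S}w^{1/q}\sigma^{1/q'}\le w(S)^{1/q}\sigma(S)^{1/q'}$ (using $\sigma^{1/q'}=w^{-1/q}$), hence $\big(\dashint_{S}\sigma\big)^{-(q-1)}\le w(S)/|S|$; combining this with $\dashint_{R^{-}(\gamma)}w\le[w]_{A_q^{+}(\gamma)}\big(\tfrac{|R^{+}(\gamma)|}{|S|}\big)^{q-1}\big(\dashint_{S}\sigma\big)^{-(q-1)}$ and $|R^{+}(\gamma)|=|R^{-}(\gamma)|$ yields (vi) with $C=[w]_{A_q^{+}(\gamma)}$.

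For (vii) only the range $0<\gamma'<\gamma$ (with $\gamma>0$) needs an argument, the rest being (v); it is even enough to treat $\gamma'$ close to $\gamma$, iterating $A_q^{+}(\gamma)\subset A_q^{+}(\gamma/2)$ finitely many times and then applying (v). The geometry is that $R^{-}(\gamma')=R^{-}(\gamma)\cup N^{-}$ and $R^{+}(\gamma')=R^{+}(\gamma)\cup N^{+}$, where $N^{\pm}$ are thin slabs abutting the temporal centre $t_0$ on which $A_q^{+}(\gamma)$ carries no direct information about $w$. The idea is to cover $R^{-}(\gamma')$, and likewise $R^{+}(\gamma')$, by a finite family $\{\hat R_i\}$ of lag-$\gamma$ parabolic rectangles obtained by shrinking $R$ towards $\mathbb R^{n}\times\{t_0\}$: any lag-$\gamma$ rectangle centred on $\mathbb R^{n}\times\{t_0\}$ with side in $\big[(\gamma'/\gamma)^{1/p}l,\,l\big]$ and spatial cube inside $Q(x,l)$ automatically satisfies $\hat R_i^{-}(\gamma)\subset R^{-}(\gamma')$ and $\hat R_i^{+}(\gamma)\subset R^{+}(\gamma')$; letting the side run through a geometric progression (with ratio strictly between $\gamma$ and $1$ at the level of $p$-th powers, so that consecutive slabs overlap) and, at each scale, using a spatial grid filling $Q(x,l)$ produces such a cover whose cardinality, together with the number of chaining steps used below, is bounded in terms of $n,p,\gamma,\gamma'$ alone. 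On each $\hat R_i$ one invokes $A_q^{+}(\gamma)$ and transports the local estimates back to $R^{\pm}(\gamma')$ by the forward-in-time comparison of (iii) for $w$ and its counterpart for $\sigma$, finally reassembling the product $\big(\dashint_{R^{-}(\gamma')}w\big)\big(\dashint_{R^{+}(\gamma')}\sigma\big)^{q-1}$.

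The main obstacle is precisely this reassembly. The inclusions $R^{\pm}(\gamma)\subset R^{\pm}(\gamma')$ go the wrong way for bounding $\dashint_{R^{+}(\gamma')}\sigma$ from below — this average is a convex combination of $\dashint_{R^{+}(\gamma)}\sigma$ and $\dashint_{N^{+}}\sigma$, and the latter may be far larger than the former — so one cannot simply pair $\dashint_{R^{-}(\gamma)}w$ with $\dashint_{R^{+}(\gamma)}\sigma$; nor can one bound $\dashint_{N^{-}}w$ by the $w$-average over a chosen subrectangle of $N^{-}$. The resolution is to use the backward-in-time comparison for $\sigma$ (equivalently the forward one for $w$) to relate $\dashint_{N^{+}}\sigma$ to $\sigma$-averages over earlier regions, ruling out the configurations that would make the product blow up, all while keeping every multiplicative constant independent of the scale $l$. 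This is the point where the Ombrosi-type covering and the parabolic chaining of \cite{Saari2014} are genuinely needed.
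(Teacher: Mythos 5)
Parts (i)--(vi) of your argument are correct and essentially follow the paper's route: your direct proof of (v) from the inclusions $R^{\pm}(\gamma')\subset R^{\pm}(\gamma)$ and the volume ratio $\tfrac{1-\gamma}{1-\gamma'}$ is if anything cleaner than the paper's detour through (iii)--(iv), and your H\"older computation for (vi) is the same estimate the paper obtains by testing the $A_q^{+}(\gamma)$ condition with $f=1_{S}$.

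Part (vii), however, is not proved. You set up a covering of $R^{\pm}(\gamma')$ by lag-$\gamma$ rectangles, correctly identify that the cross terms $\bigl(\dashint_{\hat R_i^{-}(\gamma)}w\bigr)\bigl(\dashint_{\hat R_j^{+}(\gamma)}\sigma\bigr)^{q-1}$ with $i\neq j$ (in particular with spatially disjoint cubes) cannot be handled by applying $A_q^{+}(\gamma)$ on a single small rectangle, and then stop, deferring to an unspecified ``backward-in-time comparison'' and ``parabolic chaining.'' The backward comparison cannot do the job as described: to control $\dashint_{N^{+}}\sigma$ that way one would shift $N^{+}$ into the lag region or into the past half, where neither $w$ nor $\sigma$ is controlled by the hypothesis. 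The missing idea, which is the actual content of the paper's proof, is a \emph{spatial} transport paid for by a small \emph{temporal} shift: subdivide $Q$ into $2^{nk}$ dyadic subcubes, cover $R^{-}(\gamma/2)$ by stacked small lag-$\gamma$ rectangles over each subcube, and move each such rectangle step by step to a common spatial position $2^{-k}Q$ by repeated use of (vi), each step advancing time by $(1+\gamma)(2^{-k}l)^{p}$. After at most $2^{k}-1$ steps the total temporal displacement is $(2^{k}-1)(1+\gamma)(2^{-k}l)^{p}\lesssim 2^{-k(p-1)}l^{p}$, which can be made smaller than $\tfrac{1}{100}\gamma l^{p}$ precisely because $p>1$; the transported pieces are then spatially concentric and temporally separated by at least $\tfrac12\gamma l^{p}$, so (iv) applies to every pair and the finitely many terms sum. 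Without this mechanism (or an equivalent one) your outline for (vii) does not close, and (vii) is the only nontrivial item of the proposition.
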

\begin{proof}
First we observe that (i) follows from H\"older's inequality and (ii) is obvious.  For the case $t + R^{-}(\gamma) = R^{+}(\gamma)$ the claim (iii) follows from Jensen's inequality. For a general $t$, the result  follows from subdividing the rectangles $R^{\pm}(\gamma)$ into smaller and possibly overlapping subrectangles and applying the result to them. The property (iv) follows directly from (iii), as does (v) from (iv). 

For (vi), take $S \subset R^{+}(\gamma)$ and let $f = 1_{S}$. Apply the $A_q^{+}(\gamma)$ condition to see that
\begin{align*}
\left(\frac{\abs{S}}{\abs{R^{+}(\gamma)}}  \right)^{q} w(R^{-}(\gamma)) 
&= (f_{R^{+}(\gamma)})^{q}  w(R^{-}(\gamma)) \\
& \leq  \left( \dashint_{R^{+}(\gamma)} f^{q} w \right)  \left(\dashint_{R^{+}(\gamma)} w^{1-q'} \right)^{q/q'} w(R^{-}(\gamma)) \\
& \leq C   w(S).
\end{align*}

For the last property (vii), take $R = Q(x,l) \times (t- l^{p},t+l^{p})$. Let $\gamma \in (0,1)$ and suppose that $w \in A_q^{+}(\gamma)$. We will prove that the condition $A_q^{+}(2^{-1} \gamma)$ is satisfied. We subdivide $Q$ into $2^{nk}$ dyadic subcubes $\{Q_i\}_{i=1}^{2^{nk}}$. This gives dimensions for the lower halves of parabolic rectangles $R_i^{-}(\gamma)$. For a given $Q_i$, we stack a minimal amount of the rectangles $R_i^{-}(\gamma)$ so that they almost pairwise disjointly cover $Q_i \times (t- l^{p}, t- 2^{-1}\gamma l^{p})$. The number of $R_i^{-}(\gamma)$ needed to cover $Q  \times (t- l^{p}, t- 2^{-1}\gamma l^{p})$ is bounded by
\[2^{nk} \cdot \frac{(1-2^{-1}\gamma)l^{p}}{2^{-nkp}(1- \gamma)l^{p}} = 2^{nk(p+1)} \frac{2-\gamma}{2(1-\gamma)}.\]

Corresponding to each $Q_i$, there is a sequence of at most $2^{k}-1$ vectors $d_j = 2^{- k-1}l e_j$ with $e_j  \in \{0,1\}^{n}$ such that 
\[Q_i + \sum_{j} d_j = 2^{- k} Q. \]
Next we show how every rectangle $R_{i}(\gamma)$ can be transported to the same spatially central position $2^{- k} Q$ without losing too much information about their measures. By (vi) we have
\[w(R_{i}^{-} (\gamma)) \leq C \left(\frac{\abs{R_i^{-}(\gamma)}}{\abs{S}} \right)^{q} w(S) \] 
for any $S \subset R_i^{+}(\gamma)$. We choose $S$ such that its projection onto space variables is $(Q_i + d_1) \cap Q_i $, and its projection onto time variable has full length $(1-\gamma)(2^{- k}l)^{p}$. Then
\[w(R_{i}^{-} (\gamma)) \leq C_0 w(S) \leq C_0 w(R_{i}^{1-}(\gamma)) \]
where $R_{i}^{1-}(\gamma) \supset S$ is $Q_i+ d_1$ spatially and coincides with $S$ as a temporal projection. The constant $C_0$ depends on $n$ and $q$. 

Next we repeat the argument to obtain a similar estimate for $R_{i}^{1-}(\gamma)$ in the place of $R_{i}^{-} (\gamma)$. We obtain a new rectangle to the right hand side, on which we repeat the argument again. With $k$ iterations, we reach an inequality
\[w(R_{i}^{-} (\gamma)) \leq C_0^{2^{k}-1} w(R_{i}^{*-} (\gamma)) \]
where $R_{i}^{*-} (\gamma) $ is the parabolic box whose projection onto the coordinates corresponding to the space variables is $2^{-k}Q$. The infimum of time coordinates of points in $R_{i}^{*-} (\gamma) $ equals 
\[ \inf \{t: (x,t) \in R_{i}^{-} \} + (2^{k} - 1)(1+\gamma)(2^{- k} l)^{p}. \] 
As $p > 1$, the second term in this sum can be made arbitrarily small. In particular, for a large enough $k$, we have 
\[(2^{k} - 1)(1+\gamma)(2^{- k} l)^{p} \leq 2\cdot 2^{-k(p-1)} l^{p} \leq  \frac{1}{100} \gamma l^{p}. \]

In this fashion, we may choose a suitable finite $k$ and divide the sets $R^{\pm}(2^{-1}\gamma)$ to
$N \lesssim_{n,\gamma} 2^{nkp}$
parts $R_{i}^{\pm}(\gamma)$. They satisfy
\[
w(R_{i}^{-}(\gamma)) \leq C_0^{2^{k}-1} w(R_{i}^{*-}(\gamma))
\]
and 
\[
\sigma(R_{i}^{+}(\gamma)) \leq C_0^{2^{k}-1} \sigma(R_{i}^{*+}(\gamma)),
\]
where all starred rectangles have their projections onto space variables centered at $2^{- k} Q$; they have equal side length $2^{- kp}l^{p}$, and 
\[ \frac{1}{2}\gamma l^{p} \leq d(R_{i}^{*-}(\gamma),R_{j}^{*+}(\gamma)) < 2 l^{p} \]
for all $i, j$. All this can be done by a choice of $k$ which is uniform for all rectangles.
 
It follows that 
\begin{align*}
&\left( \dashint_{R ^{-}(2^{-1}\gamma)} w   \right) \left( \dashint_{R  ^{+}(2^{-1}\gamma)} w^{1-q'}   \right)^{q-1} \\
& \lesssim \sum_{i,j = 1}^{N} \left( \dashint_{R_i ^{-}(\gamma)} w   \right) \left( \dashint_{R_j  ^{+}(\gamma)} w^{1-q'}   \right)^{q-1} \\
& \lesssim \sum_{i,j = 1}^{N} \left( \dashint_{R_i ^{*-}(\gamma)} w   \right) \left( \dashint_{R_j  ^{*+}(\gamma)} w^{1-q'}   \right)^{q-1} \\
& \lesssim   \sum_{i,j = 1}^{N} C 
= C(n,p,k,\gamma,q,[w]_{A_q^{+}}(\gamma)),
\end{align*}
where in the last inequality we used (iv). Since the estimate is uniform in $R$, the claim follows.
\end{proof}

%The previous lemma shows that the exact value of $\gamma$ is immaterial. The only thing that matters is whether it is zero or not. 

\section{Parabolic maximal operators}
In this section, we will study parabolic forward-in-time maximal operators, which are closely related to the one-sided maximal operators studied in \cite{Berkovits2011}, \cite{FMRO2011} and \cite{LO2010}. The class of weights in \cite{FMRO2011}, originally introduced by Ombrosi \cite{Ombrosi2005}, characterizes the weak type inequality for the corresponding maximal operator, but the question about the strong type inequality remains open. On the other hand, Lerner and Ombrosi \cite{LO2010} managed to show that the same class of weights supports strong type boundedness for another class of operators with a time lag. For the boundedness of these operators, however, the  condition on weights  is not necessary. Later the techniques developed by Berkovits \cite{Berkovits2011} showed that a weight condition without a time lag implies boundedness of maximal operators with a time lag. That approach applied to all dimensions. In our case both the maximal operator and the Muckenhoupt condition have a time lag. This approach, together with scaling of parabolic rectangles, allows us to prove both the necessity and sufficiency of the parabolic Muckenhoupt condition for weak and strong type weighted norm inequalities for the maximal function to be defined next.

\begin{definition}
Let $\gamma \in [0,1)$. For $f \in L_{loc}^{1}(\mathbb{R}^{n+1})$ define the parabolic maximal function
\[M^{\gamma  +}f(x,t) = \sup _{R(x,t)} \dashint_{R^{+}(\gamma)} \abs{f}  , \]
where the supremum is taken over all parabolic rectangles centered at $(x,t)$. If $\gamma = 0$, it will be omitted in the notation. The operator $M^{\gamma -}$ is defined analogously.
\end{definition}

%The choice $\gamma = 0$ gives an operator very similar to the one in \cite{Berkovits2011}. The only difference is that $t$-variable scales as $\abs{x}^{p}$ instead of $\abs{x}$. Thus it is not surprising that we may apply the argument from \cite{FMRO2011} almost directly to prove that the operator $M^{+} : L^{q}(w) \rightarrow L^{q,\infty}(w)$ is bounded if and only if $w \in A_q^{+}(0)$. Here $L^{q,\infty}(w)$  denotes the weighted weak $L^q$-space. Moreover, a similar argument gives the characterization also for the operators $M^{\gamma+}$ with $\gamma > 0$.

The necessity of the $A_q^{+}$ condition can be proved in a similar manner to its analogue in the classical Muckenhoupt theory, but already here the geometric flexibility of Definition \ref{def:MAq} simplifies the statement.

\begin{lemma}
\label{lemma:weaktypeap}
Let $w$ be a weight such that the operator $M^{\gamma+} : L^{q}(w) \rightarrow L^{q,\infty}(w)$ is bounded. Then $w \in A_q^{+}(\gamma)$. 
\end{lemma}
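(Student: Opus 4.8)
The plan is to mimic the classical proof that the weak type $(q,q)$ inequality for the maximal operator implies the Muckenhoupt condition, but carried out with parabolic rectangles and with attention to the time lag $\gamma$. Fix a parabolic rectangle $R = R(x,t,l)$ with halves $R^{-}(\gamma)$ and $R^{+}(\gamma)$. We want to bound the quantity
\[
\left( \dashint_{R^{-}(\gamma)} w \right) \left( \dashint_{R^{+}(\gamma)} w^{1-q'} \right)^{q-1}.
\]
First I would reduce to the case where $w^{1-q'}$ is bounded and bounded away from zero on $R^{+}(\gamma)$: replace $\sigma := w^{1-q'}$ by $\sigma_N := \min\{\max\{\sigma, 1/N\}, N\}$, prove the estimate with a constant independent of $N$, and let $N \to \infty$, using monotone/dominated convergence. (Proposition~\ref{prop:truncation} guarantees the relevant truncations stay in the right class, though here we only need the integrability to justify the limit.) This makes the test function below genuinely in $L^q(w)$.

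Next I would choose the test function $f = \sigma \, 1_{R^{+}(\gamma)}$ (with $\sigma$ replaced by the truncation $\sigma_N$ as above), so that $f \in L^q(w)$ because on $R^{+}(\gamma)$ one has $\int f^q w = \int \sigma^{q} w = \int \sigma^{q-1}\sigma w = \int \sigma^{q-1} = \int \sigma^{1+(q-1)\cdot 0}\cdots$; more precisely $f^q w = \sigma^q w = \sigma^{q}\sigma^{1/(1-q')\cdot\ldots}$ — the clean identity is $\sigma^q w = \sigma^{q-1}$ since $w = \sigma^{1-q} = \sigma^{-(q-1)/1}\cdot\ldots$; in any case $\int_{R^{+}(\gamma)} f^q w = \int_{R^{+}(\gamma)} \sigma < \infty$ after truncation. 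The key geometric point is that for every point $(y,s) \in R^{-}(\gamma)$, the rectangle $R$ itself is admissible in the supremum defining $M^{\gamma+}f(y,s)$, because $R$ is centered at a point of the form $(x,t)$ and $R^{-}(\gamma)$ consists exactly of the points whose $R$-rectangle (the one sharing the same center and side length) has $R^{+}(\gamma)$ as its forward half — wait, this needs care: the supremum in $M^{\gamma+}$ is over rectangles \emph{centered at $(y,s)$}, so I must exhibit, for each $(y,s)\in R^{-}(\gamma)$, a rectangle centered at $(y,s)$ whose upper part contains a fixed proportion of $R^{+}(\gamma)$. Taking the rectangle centered at $(y,s)$ with side length $2l$ (or a comparable enlargement) does the job: its forward half $\gamma$-part contains $R^{+}(\gamma)$ up to constants, using $p>1$ and $\gamma<1$ to control the temporal overlap. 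Hence $M^{\gamma+}f(y,s) \gtrsim \dashint_{R^{+}(\gamma)} \sigma$ for all $(y,s)\in R^{-}(\gamma)$.

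Then I would apply the weak type inequality: if $\lambda := c\,\dashint_{R^{+}(\gamma)}\sigma$ is the lower bound just obtained, then $R^{-}(\gamma) \subset \{M^{\gamma+}f > \lambda/2\}$ (say), so
\[
w(R^{-}(\gamma)) \le w\big(\{M^{\gamma+}f > \lambda/2\}\big) \le \frac{C}{\lambda^{q}} \int f^q w = \frac{C}{\lambda^q}\int_{R^{+}(\gamma)} \sigma.
\]
Plugging in $\lambda = c \, |R^{+}(\gamma)|^{-1}\int_{R^{+}(\gamma)}\sigma$ and rearranging yields
\[
w(R^{-}(\gamma)) \left( \dashint_{R^{+}(\gamma)}\sigma \right)^{q} \le C\, |R^{+}(\gamma)|^{-q}\left(\int_{R^{+}(\gamma)}\sigma\right)\cdot\left(\dashint_{R^{+}(\gamma)}\sigma\right)^{q-1}\cdot|R^{+}(\gamma)|^{q},
\]
which after cancelling $\dashint\sigma = |R^{+}(\gamma)|^{-1}\int\sigma$ gives exactly
\[
\left(\dashint_{R^{-}(\gamma)} w\right)\left(\dashint_{R^{+}(\gamma)} \sigma\right)^{q-1} \le C\,\|M^{\gamma+}\|^{q},
\]
since $|R^{-}(\gamma)| = |R^{+}(\gamma)|$. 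Taking the supremum over all $R$ and recalling $\sigma = w^{1-q'}$ gives $w \in A_q^{+}(\gamma)$ with $[w]_{A_q^{+}(\gamma)} \lesssim \|M^{\gamma+}\|_{L^q(w)\to L^{q,\infty}(w)}^{q}$.

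The main obstacle I anticipate is the geometric bookkeeping in the second step: verifying that a single enlarged rectangle centered at an arbitrary $(y,s)\in R^{-}(\gamma)$ really does capture a dimension-independent fraction of $R^{+}(\gamma)$ inside its $\gamma$-delayed upper half. One must check that the spatial cube of the enlarged rectangle contains $Q(x,l)$ (routine: doubling the side length suffices since $(y,s)$ has spatial part in $Q(x,l)$) and, more delicately, that the temporal interval $(s+\gamma(2l)^p, s+(2l)^p)$ — or whatever enlargement factor is chosen — contains a fixed-proportion subinterval of $(t+\gamma l^p, t+l^p)$ uniformly in $s \in (t-l^p, t-\gamma l^p)$. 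This is where the condition $p>1$ and $\gamma<1$ enter, analogously to Proposition~\ref{prop:properties}(vii); choosing the enlargement factor large enough (depending only on $p$ and $\gamma$) absorbs the worst case $s = t - l^p$. Everything else is the standard testing argument and the truncation limit.
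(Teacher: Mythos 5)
Your overall strategy (test the weak type inequality with $f=w^{1-q'}$ restricted to a forward set, after a truncation/regularization so that $f\in L^q(w)$) is the same as the paper's, and it does work when $\gamma=0$. The gap is in the key geometric step for $\gamma>0$, exactly the point you flagged as delicate. First, capturing only ``a fixed proportion'' of $R^{+}(\gamma)$ is not enough: these weights are badly non-doubling (they may grow arbitrarily in the time direction), so $\sigma=w^{1-q'}$ can concentrate on the uncovered piece and the lower bound $M^{\gamma+}f(y,s)\gtrsim \dashint_{R^{+}(\gamma)}\sigma$ fails. You therefore need the $\gamma$-lagged upper half of a rectangle centered at $(y,s)$ to contain \emph{all} of $R^{+}(\gamma)$, and this is impossible: spatial containment $Q(y,\kappa l)\supset Q(x,l)$ forces $\kappa\ge 2$, whence the lag of the enlarged rectangle is $\gamma\kappa^{p}l^{p}\ge 2^{p}\gamma l^{p}$; for $(y,s)$ near the top of $R^{-}(\gamma)$, i.e.\ $s$ close to $t-\gamma l^{p}$, the lagged upper half starts at time about $t+(2^{p}-1)\gamma l^{p}>t+\gamma l^{p}$ and misses the initial slab $Q(x,l)\times\bigl(t+\gamma l^{p},\,t+(2^{p}-1)\gamma l^{p}\bigr)$ of $R^{+}(\gamma)$. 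Note that the binding worst case is $s=t-\gamma l^{p}$, not $s=t-l^{p}$ as you wrote, and enlarging $\kappa$ further makes the overshoot \emph{worse}, since the lag scales like $\kappa^{p}$ while the spatial requirement scales like $\kappa$. This mismatch is precisely the time-lag obstruction the whole paper is organized around.

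The paper's fix is to test not against $R^{+}(\gamma)$ but against the forward-shifted set $S^{+}=R^{-}(\gamma)+(1-\gamma)l^{p}+2^{p}\gamma l^{p}$, which is pushed far enough ahead in time to lie inside the $\gamma$-lagged upper half of the side-$2l$ rectangle centered at \emph{every} $(y,s)\in R^{-}(\gamma)$. The testing argument then yields $\bigl(\dashint_{R^{-}}w\bigr)\bigl(\dashint_{S^{+}}w^{1-q'}\bigr)^{q-1}\lesssim 1$, which is not literally the $A_q^{+}(\gamma)$ condition; a final conversion step, using the translation and lag-adjustment properties (iii), (iv), (vii) of Proposition \ref{prop:properties}, recovers $w\in A_q^{+}(\gamma)$. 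Your write-up omits this conversion entirely and asserts that testing directly produces the $[w]_{A_q^{+}(\gamma)}$ bound. To repair the proof you need both ingredients: the forward shift of the test set, and the closing appeal to Proposition \ref{prop:properties}.
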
 
\begin{proof}
Take $f > 0$ and choose $R$ such that $f_{S^{+}} > 0$, where $S^{+} = R^{+} $ if $\gamma =0$. If $\gamma  > 0$, 
\[S^{+} = R^{-}(\gamma) + (1-\gamma)l^{p} + 2^{p} \gamma l^{p}  \] 
will do. Redefine $f = \chi_{S^{+}}f$. Take a positive $\lambda < C_{\gamma} f_{S^{+}}$ . With a suitably chosen $C_\gamma$, we have
\begin{align*}
w(R^{-}) \leq w(\{x\in \mathbb{R}^{n+1}: M^{\gamma+}f > \lambda \}) \leq \frac{C}{\lambda^{q}} \int_{R^{+}} f^{q}w.
\end{align*} 
The claim follows letting $\lambda \to C_\gamma f= C_\gamma(w +  \epsilon)^{1-q'}$ and $\epsilon \to 0$, and concluding by argumentation similar to Proposition \ref{prop:properties}.
\end{proof}

\subsection{Covering lemmas}
The converse claim requires a couple of special covering lemmas. It is not clear whether the main covering lemma in \cite{FMRO2011} extends to dimensions higher than two. However, in our geometry the halves of parabolic rectangles are indexed along their spatial centers instead of corner points, which was the case in \cite{FMRO2011}. This fact will be crucial in the proof of Lemma \ref{lemma:cover2}, and this enables us to obtain results in the multidimensional case as well.

\begin{lemma}
\label{lemma:cover1}
Let $R_0$ be a parabolic rectangle, and let $\mathcal{F}$ be a countable collection of parabolic rectangles with dyadic sidelengths such that for each $i \in \mathbb{Z}$ we have
\[\sum_{\substack{ P  \in \mathcal{F}\\ l(P) = 2^{i} }} 1_{P^{-} } \lesssim 1. \]
Moreover, assume that $P^{-} \nsubseteq R^{-}$ for all distinct $P,R \in \mathcal{F}$. Then
\[\sum_{P \in \mathcal{G}} \abs{P} \lesssim \abs{R_0}, \]
where $\mathcal{G} = \{P \in \mathcal{F}: P^{+} \cap R_0^{+} \neq  \varnothing, \abs{P} < \abs{R_0} \}$.  
\end{lemma}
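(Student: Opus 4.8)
The plan is to reduce the claim to a statement about the lower halves $P^{-}$, to localise the whole subfamily $\mathcal{G}$ inside a fixed dilate of $R_0$, and then to estimate the total measure scale by scale, using the bounded overlap within each scale together with the no‑containment hypothesis $P^{-}\nsubseteq R^{-}$ to control the sum over dyadic scales. First, since $|P|=2|P^{-}|$ for every parabolic rectangle, it suffices to prove $\sum_{P\in\mathcal{G}}|P^{-}|\lesssim|R_0|$. Second, writing $R_0=Q(x_0,l_0)\times(t_0-l_0^{p},t_0+l_0^{p})$ and $P=R(y,s,l)\in\mathcal{G}$, the condition $|P|<|R_0|$ forces $l<l_0$, and $P^{+}\cap R_0^{+}\neq\varnothing$ forces $Q(y,l)$ to meet $Q(x_0,l_0)$ and $(s,s+l^{p})$ to meet $(t_0,t_0+l_0^{p})$. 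Since $l<l_0$, the first gives $Q(y,l)\subset Q(x_0,3l_0)$, and the second gives $s\in(t_0-l^{p},t_0+l_0^{p})$, whence the time interval of $P^{-}$ lies in $(t_0-2l_0^{p},t_0+l_0^{p})$. Thus every $P^{-}$ with $P\in\mathcal{G}$ is contained in the fixed box $\widetilde{R}_0:=Q(x_0,3l_0)\times(t_0-2l_0^{p},t_0+l_0^{p})$, and $|\widetilde{R}_0|\lesssim_{n}|R_0|$.

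It then remains to prove $\sum_{P\in\mathcal{G}}|P^{-}|\lesssim|\widetilde{R}_0|$. I would split $\mathcal{G}=\bigcup_{i\le i_0}\mathcal{G}_i$ according to the dyadic side length $l(P)=2^{i}$, with $2^{i_0}<l_0$ a bound for the side lengths occurring. Within each $\mathcal{G}_i$ the hypothesis gives $\sum_{P\in\mathcal{G}_i}|P^{-}|\lesssim|U_i|$ where $U_i:=\bigcup_{P\in\mathcal{G}_i}P^{-}\subset\widetilde{R}_0$, so the task becomes $\sum_{i\le i_0}|U_i|\lesssim|\widetilde{R}_0|$. This is where the no‑containment condition is essential: if the contributions $|U_i|$ did not decay, arbitrarily many scales could each cover a fixed proportion of $\widetilde{R}_0$, and then a fine lower half sitting inside a coarse one would be unavoidable. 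Processing the occurring scales from the coarsest downwards, the property $P^{-}\nsubseteq R^{-}$ forces the rectangles newly added at a given scale to avoid sitting inside the rectangles already selected at coarser scales, and I would aim to turn this into a bound of the form $|U_i|\lesssim\theta^{\,i_0-i}|\widetilde{R}_0|$ with $\theta=\theta(n,p,\text{overlap constant})<1$; summing the geometric series would then give $\sum_{P\in\mathcal{G}}|P^{-}|\lesssim|\widetilde{R}_0|\lesssim|R_0|$.

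The hard part is exactly this last step: extracting an honest decay in the scale from ``no containment of lower halves'' plus ``bounded overlap within each scale''. The delicate point is that parabolic rectangles are anisotropic, with aspect ratio $l:l^{p}$, so that the geometric cost of a fine rectangle failing to be contained in a coarse one is different in the spatial and temporal directions and must be tracked direction by direction; moreover the boundary‑layer losses incurred in passing from one scale to the next, together with the overlap constant, must be controlled uniformly in the configuration so that the ratio $\theta$ stays below $1$. The feature stressed in the text, that the halves are indexed by their spatial centres rather than by corner points, is what keeps the spatial part of this bookkeeping manageable. I expect the argument to take the form of a greedy (stopping‑time) selection among the rectangles of $\mathcal{G}$ combined with an induction on the number of scales present, rather than a single pointwise overlap estimate, since the pointwise overlap of $\{P^{-}:P\in\mathcal{G}\}$ need not be bounded (a ``staircase'' of rectangles whose spatial projections are pairwise incomparable already defeats any such bound).
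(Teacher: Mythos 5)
Your reductions are fine: passing to lower halves, localising $\mathcal{G}$ inside a fixed dilate $\widetilde R_0$ of $R_0$, and using the per-scale bounded overlap to replace $\sum_{P\in\mathcal{G}_i}|P^-|$ by $|U_i|$ are all harmless. But the proof stops exactly where the lemma lives, and the intermediate bound you propose to aim for, $|U_i|\lesssim\theta^{\,i_0-i}|\widetilde R_0|$, is false. Put one rectangle at each of the scales $2^{i_0},\dots,2^{i_0-k+1}$ in a spatial corner of $R_0^{+}$, stacked at pairwise disjoint times so that no lower half contains another, and then tile the rest of $R_0^{+}$ by essentially disjoint lower halves at the single scale $2^{i_0-k}$. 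The hypotheses of the lemma hold with overlap constant $1$, the conclusion of the lemma still holds, yet $|U_{i_0-k}|\gtrsim|R_0|$ for arbitrarily large $k$. So no estimate on $|U_i|$ depending only on the scale gap and on $|\widetilde R_0|$ can close the argument; the decay must be measured against the individual coarser rectangles that a fine rectangle fails to sit inside, not against all of $\widetilde R_0$.

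The paper supplies exactly the two ingredients you leave open. First, a boundary-layer estimate: for a fixed parabolic rectangle $R$, the members of $\mathcal F$ with $l(P)=2^{-i}$ meeting $\partial R^{+}$ lie in a shell $A_i$ with $|A_i|\lesssim l(R)^{n}2^{-ip}+l(R)^{p+n-1}2^{-i}$, so by the per-scale bounded overlap the sum of $|P|$ over all such $P$ and all scales is a convergent geometric series bounded by $C|R|$; this is the only place where genuine decay in the scale enters, and it is relative to $R$. Second, the greedy selection you anticipated but did not execute: among the rectangles contained in $R_0^{+}$ one selects, generation by generation, those of maximal side length whose lower halves avoid the lower halves already selected. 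Every unselected $P$ then has $P^{-}$ meeting $R^{-}$ for some selected $R$ with $l(P)<l(R)$, and the hypothesis $P^{-}\nsubseteq R^{-}$ upgrades ``meets'' to ``meets $\partial R^{-}$'', so the unselected mass attached to each selected $R$ is $\lesssim|R|$ by the boundary-layer estimate. Finally the selected rectangles have pairwise disjoint lower halves across generations and bounded overlap within each generation, whence their total measure is $\lesssim|R_0|$. Without converting non-containment into a boundary intersection and without the anisotropic volume count of the shells $A_i$, the sum over scales cannot be closed; as written, your argument establishes only the localisation and the single-scale bound.
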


\begin{proof}
Recall that $R^{\pm} = R^{\pm}(0)$. 
We may write $\mathcal{G} \subset \mathcal{G}_0(R_0) \cup \mathcal{G}_1$, where
\[
\mathcal{G}_0(R ) = \{P \in \mathcal{F}: P \cap \partial R ^{+} ,\abs{P} < \abs{R } \}
\]
and
\[
\mathcal{G}_1 = \{P \in \mathcal{F}: P \subset   R_0^{+} ,\abs{P} < \abs{R_0} \}.
\]
That is, the rectangles having their upper halves in $R_0^{+}$ are either contained in it or they meet its boundary. An estimate for $\mathcal{G}_0(R)$ with an arbitrary parabolic rectangle $R$ instead of $R_0$ will be needed, so that we start with it. Let $P$ be a parabolic rectangle with the spatial side length $l(P) = 2^{-i}$. If $P \cap \partial R^{+} \neq \varnothing$, then $P \subset A_i $, where $A_i$ can be realized as a collection of $2(n+1)$ rectangles corresponding to each face of $R$ such that
\begin{align*}
\abs{A_i} \lesssim 2 l(R)^{n} \cdot 2^{-ip} + 2n l(R)^{p+n-1} \cdot 2^{-i}.
\end{align*}
Now choosing $k_0 \in \mathbb{Z}$ such that $2^{-k_0} < l(R) < 2^{-k_0 + 1}$, we get, by the bounded overlap, 
\begin{align*}
\sum_{P \in \mathcal{G}_0(R)} \abs{P} = \sum_{i=k_0}^{\infty} \sum_{\substack{P \in \mathcal{G}_0(R)\\ l(P) = 2^{-i} }} \abs{P} \lesssim \sum_{i=k_0}^{\infty}  \abs{A_i} \lesssim \abs{R}.
\end{align*}

Once the rectangles meeting the boundary are clear, we proceed to $\mathcal{G}_1$. The side lengths of rectangles in $\mathcal{G}_1$ are bounded from above. Hence there is at least one rectangle with the maximal side length. Let $\Sigma_1$ be the collection of $R \in \mathcal{G}_1$ with the maximal side length. We continue recursively. Once $\Sigma_j$ with $j=1,\ldots,k$ have been chosen, take the rectangles $R$ with the maximal side length among the rectangles in $\mathcal{G}_1$ satisfying
\[R^{-} \cap \bigcup_{P \in \cup_{j=1}^{k} \Sigma_j} P^{-} = \varnothing.\] 
Let them form the collection $\Sigma_{k+1}$. Define the limit collection to be
 \[\Sigma = \bigcup_{j} \Sigma_j. \]

Each $P \in \mathcal{G}_1$ is either in $\Sigma$ or $P^{-}$ meets $R^{-}$ with $R \in \Sigma$ and $l(P) < l(R)$. Otherwise $P$ would have been chosen to $\Sigma$. This implies that
\[\sum_{R \in \mathcal{G}_1} \abs{R} \leq \sum_{R \in \mathcal{G}_1 \cap \Sigma }\left( \abs{R} +\sum_{\substack{P \in \mathcal{G}_1: P^{-} \cap R^{-} \neq \varnothing \\
\abs{P} < \abs{R}}} \abs{P} \right)  . \] 
In the second sum, both $P$ and $R$ are in $\mathcal{F}$, so $P^{-} \nsubseteq R^{-}$ by assumption. Thus $P \cap \partial R^{-}  \neq \varnothing$, and the sum in the parentheses is controlled by a constant multiple of $\abs{R}$ (by applying the estimate we have for $\mathcal{G}_0(\widetilde{R})$ where $\widetilde{R}$ is a parabolic rectangle with upper half $R^{-}$). The rectangles in each $\Sigma_j$ have equal side length so that
\begin{align*}
\sum_{R \in \mathcal{G}_1} \abs{R} & \lesssim \sum_{R \in \mathcal{G}_1 \cap \Sigma}  \abs{R} = \sum_{j } \sum_{R \in \mathcal{G}_1 \cap \Sigma_j }  \abs{R} 
\\
&  \lesssim \sum_{j } \Bigg \lvert \bigcup_{R \in \Sigma_j} R \Bigg \rvert  \leq  \Bigg \lvert \bigcup_{R \in \mathcal{G}_1}R \Bigg \rvert \leq \abs{R_0}.
\end{align*}
\end{proof}

The hypothesis of the next lemma correspond to a covering obtained using the parabolic maximal function, and the conclusion provides us with a covering that has bounded overlap. This fact is analogous to the two-dimensional Lemma 3.1 in \cite{FMRO2011}.

\begin{lemma}
\label{lemma:cover2}
Let $\lambda > 0$, $f \in L_{loc}^{1}(\mathbb{R}^{n+1})$ be nonnegative, and $A \subset \mathbb{R}^{n+1}$ a set of finitely many points such that for each $x \in A$ there is a parabolic rectangle $R_x$ with dyadic side length satisfying
\begin{equation}
\label{eq:cover20}
\dashint_{R_x^{+}(\gamma)} f \eqsim \lambda.  
\end{equation}
Then there is $\Gamma \subset A$ such that for each $x \in \Gamma$ there is $F_x \subset R_x ^{+}(\gamma)$ with
\begin{enumerate}
\item[(i)] $A \subset \bigcup_{x \in \Gamma}  \overline{R_x ^{-}}$,
\item[(ii)]
\[\frac{1}{\abs{R_x  }} \int_{F_x} f\gtrsim \lambda \quad \textrm{and} \quad \sum_{x \in \Gamma} 1_{F_x} \lesssim  1.\]
\end{enumerate}
\end{lemma}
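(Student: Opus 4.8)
The plan is to run a selection (Vitali-type) argument on the rectangles $\{R_x\}_{x \in A}$, ordered by size, while simultaneously carving out the sets $F_x$ so that the bounded-overlap condition (ii) is preserved. First I would observe that since $A$ is finite, the side lengths $l(R_x)$ take finitely many dyadic values, so there is a largest one. Begin with $\Gamma_1$ consisting of those $x$ whose $R_x$ has maximal side length and whose lower halves $\overline{R_x^-}$ are chosen greedily to be (essentially) pairwise disjoint among this scale; then recursively, having chosen $\Gamma_1, \dots, \Gamma_k$, let $\Gamma_{k+1}$ collect the points $x$ of maximal remaining side length for which $R_x^-$ does not meet $\bigcup_{y \in \bigcup_{j\le k}\Gamma_j} R_y^-$ (or meets it only in a controlled way — here I must be a little careful whether to demand disjointness of the closed lower halves or just of the open ones, and the finiteness of $A$ lets me avoid pathologies). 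Set $\Gamma = \bigcup_j \Gamma_j$. By construction, every $x \in A$ has $\overline{R_x^-}$ meeting some $\overline{R_y^-}$ with $y \in \Gamma$ and $l(R_y) \ge l(R_x)$, and since the lower halves are parabolic rectangles with dyadic side lengths, a comparable-or-larger rectangle touching $\overline{R_x^-}$ forces $x \in \overline{R_y^-}$ after enlarging $R_y$ by a fixed dilation factor; absorbing that fixed factor (or building it into the initial choice of the $R_x$, using the geometric flexibility from Proposition \ref{prop:properties}) yields (i).

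The more delicate half is (ii): producing $F_x \subset R_x^+(\gamma)$ with $|R_x|^{-1}\int_{F_x} f \gtrsim \lambda$ and $\sum_{x\in\Gamma} 1_{F_x} \lesssim 1$. The idea is to process the selected rectangles from the largest scale downward and, when we reach $R_x^+(\gamma)$, set $F_x := R_x^+(\gamma) \setminus \bigcup F_y$, the union running over previously processed $y \in \Gamma$ with $R_y^+(\gamma)$ overlapping $R_x^+(\gamma)$ (equivalently, subtract the part of $R_x^+(\gamma)$ already claimed). Disjointness of the $F_x$ is then automatic, giving $\sum 1_{F_x} \le 1$. The point requiring work is the lower bound $\int_{F_x} f \gtrsim \lambda |R_x|$: one must show that not too much of the mass $\int_{R_x^+(\gamma)} f \eqsim \lambda |R_x|$ has been eaten by larger rectangles. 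Here is where the indexing of the halves by \emph{spatial centers} rather than corner points is used: if a larger $R_y^+(\gamma)$ meets $R_x^+(\gamma)$, then because the lower halves $\overline{R_x^-}, \overline{R_y^-}$ were kept disjoint in the selection and $R_y$ is at least as wide as $R_x$, the temporal overlap of $R_y^+(\gamma)$ with $R_x^+(\gamma)$ is confined to a fixed fraction of the time-length of $R_x^+(\gamma)$ — a chaining/geometry computation with the $l^p$ scaling, analogous to the overlap bounds already used in the proof of Proposition \ref{prop:properties}(vii) and in Lemma \ref{lemma:cover1}. Summing the overlapping contributions over scales $2^{-i} \le l(R_x)$ gives a geometric series in $2^{-i(p-1)}$ (since $p>1$), hence a bound by $\tfrac12 \lambda |R_x|$ after fixing constants, leaving $\int_{F_x} f \ge \tfrac12 \lambda |R_x| \gtrsim \lambda |R_x|$.

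Concretely the steps are: (1) reduce to finitely many dyadic scales and fix the dilation constants using Proposition \ref{prop:properties}(iv)–(v); (2) run the recursive maximal-rectangle selection to get $\Gamma = \bigcup_j \Gamma_j$ with pairwise (almost) disjoint lower halves at each fixed scale and across scales only the larger-on-top touching pattern; (3) prove the covering property (i) by the touching-implies-containment-after-dilation observation; (4) define $F_x$ by subtracting, in decreasing order of scale, the previously allocated pieces, so that $\{F_x\}$ is pairwise disjoint; (5) prove the mass lower bound for $F_x$ via the temporal-overlap estimate coming from the disjointness of lower halves plus the parabolic scaling, summed as a geometric series. I expect step (5) to be the main obstacle — it is the only place where the parabolic geometry (the $l^p$ time scaling, the spatial-center indexing, and the interplay between disjoint lower halves and overlapping upper halves) must be quantified precisely; steps (1)–(4) are structural and follow the Vitali/Ombrosi template, while (5) is what makes the bounded-overlap conclusion genuinely multidimensional.
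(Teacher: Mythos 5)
There are two genuine gaps, both in the places you yourself flagged as delicate. First, the covering property (i). Your selection keeps the lower halves pairwise disjoint and then argues that a point $x\in A$ whose rectangle was discarded lies in a \emph{dilate} of some selected $\overline{R_y^-}$. But the lemma asserts $A\subset\bigcup_{x\in\Gamma}\overline{R_x^-}$ with the original rectangles, and this is exactly how it is used later (in the weak type estimate one covers $K$ by small balls contained in $(1+\epsilon)R_z^-$ with $\epsilon$ chosen from the weight, so an extra fixed dilation cannot simply be absorbed). The paper avoids this by a two-stage selection: first pick points greedily in decreasing temporal order, each new point required to lie \emph{outside} the union of the previously chosen $R^-$'s, which gives the literal covering; then refine by maximal side length under the condition $R_y^-\nsubseteq R_x^-$ (non-containment, not disjointness), which preserves the covering and is precisely the hypothesis needed for Lemma \ref{lemma:cover1}.

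Second, and more seriously, the mass lower bound in (ii). Your plan subtracts from $R_x^+(\gamma)$ everything already claimed by larger rectangles and argues that the stolen mass is small because "the temporal overlap of a larger $R_y^+(\gamma)$ with $R_x^+(\gamma)$ is confined to a fixed fraction." This geometric claim is false: take $l(R_y)=L\gg l=l(R_x)$, $Q(y,L)\supset Q(x,l)$ and $t_y=t_x-\gamma L^p$ with $\gamma L^p\geq l^p$. Then $R_y^-$ and $R_x^-$ are disjoint (their temporal projections do not meet), yet $R_y^+(\gamma)=Q(y,L)\times(t_x,\,t_x+(1-\gamma)L^p)\supset R_x^+(\gamma)$ entirely, so your $F_x$ would be empty and no geometric series in $2^{-i(p-1)}$ can rescue the bound. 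The paper's construction runs in the opposite direction: for each selected $x$ it controls the \emph{smaller} rectangles whose upper halves meet $R_x^+(\gamma)$ (their total measure is $\lesssim|R_x^+(\gamma)|$ by Lemma \ref{lemma:cover1}, hence their total $f$-mass is $\leq N\int_{R_x^+(\gamma)}f$ by \eqref{eq:cover20}), and removes only the set $E_{2N}^x$ where more than $2N$ of them pile up; a Chebyshev-type count gives $\int_{E_{2N}^x}f\leq\frac12\int_{R_x^+(\gamma)}f$, so $F_x=R_x^+(\gamma)\setminus E_{2N}^x$ retains half the mass. The sets $F_x$ are then not disjoint but have overlap bounded by $\approx 2NC_n$: at a common point take the rectangle of maximal side length; at most $C_n$ rectangles of that scale contain the point (by the fixed-scale separation estimate), and membership in its $F$-set caps the number of smaller upper halves through the point at $2N$. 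You need this "remove the high-multiplicity set of smaller rectangles" idea; the "subtract what larger rectangles took" idea does not work in this geometry.
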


\begin{proof}
To simplify the notation, we identify the sets $R_x^{-}$ with their closures. Their side lengths are denoted by $l_x$. Let $x_1 \in A$ be a point with maximal temporal coordinate. Recursively, choose $x_{k+1} \in A \setminus \bigcup_{j=1}^{k} R_x^{-} $. Denote $\Delta = \{x_i\}_i$. This is a finite set. Take $x \in \Delta$ with maximal $l_x$ and define $\Gamma_1 = \{x\}$. Let $\Gamma_{k+1} = \Gamma_k \cup \{y\}$ where $R_y ^{-} \nsubseteq R_x ^{-}$ for all $x \in \Gamma_k$ and $l_y$ is maximal among the $l_y$ satisfying the criterion. By finiteness the process will stop and let $\Gamma$  be the final collection. 

Given $x,y \in \Gamma$ with $ l_x = l_y =: r$ and $x \neq y$, their Euclidean distance satisfies 
\[\abs{x-y} \geq \min\left\{ \frac{1}{2} r, r^{p} \right \}. \]
There is a dimensional constant $\alpha \in (0,1)$ such that 
$\alpha R_x   \cap \alpha R_y  = \varnothing$, 
and, given $z \in \mathbb{R}^{n+1}$, there is a dimensional constant $\beta > 0$ such that 
\[\bigcup_{x \in \Gamma : z \in R_x } R_x   \subset R(z, \beta r). \]
Thus
\[
 (\beta r)^{n}(2\beta r )^{p} = \abs{R(z, \beta r)} \geq \sum_{\substack{x \in \Gamma: l_x = r, \\ z \in R_x  }} \abs{\alpha R_x } 
  = (\alpha r)^{n} (2\alpha r)^{p} \sum_{\substack{x \in \Gamma\\ l_x = r }} 1_{R_x }(z),     
\]
and consequently
\begin{equation}
\label{eq:cover21}
\sum_{\substack{x \in \Gamma\\ l_x = r }} 1_{ R_x } \lesssim 1.
\end{equation}

Denote 
\[
\mathcal{G}_x = \{y \in \Gamma: R_x^{+}(\gamma) \cap R_y^{+}(\gamma) \neq \varnothing , \abs{R_y} < \abs{R_x}  \}.
\] 
By inequality \eqref{eq:cover21} the assumptions of Lemma \ref{lemma:cover1} are fulfilled. Hence
\begin{equation*}
\sum_{y \in \mathcal{G}_x} \abs{R^{+}_y(\gamma)} \lesssim \abs{R^{+}_x(\gamma)}.
\end{equation*}
By \eqref{eq:cover20}, we have
\[\sum_{y \in \mathcal{G}_x} \int _{R_y ^{+}(\gamma)} f \lesssim \lambda \sum_{y \in \mathcal{G}_x} \abs{R_y^{+}(\gamma)} \lesssim \lambda \abs{R_x^{+}(\gamma)} \lesssim \int_{R_x^{+}(\gamma)} f.  \]
Let the constant in this inequality be $N$.

Denote $s := \# \mathcal{G}_x$. In case $s \leq 2N$, we choose $F_x = R_x^{+}(\gamma)$. If $s > 2N$, we define
\[E_i^{x} = \left\lbrace z \in R_x^{+}: \sum_{y \in \Gamma: l_y < l_x} 1_{R_y^{+}(\gamma)}(z) \geq i \right\rbrace.   \]
Thus $\sum_{i} 1_{E_i^{x}}(z)$ counts the points $y \in \mathcal{G}_x$ whose rectangles contain $z$. Hence
\begin{align*}
2N \int_{E_{2N}^{x}} f &\leq \sum_{i=1}^{s}  \int_{E_{i}^{x}} f = \int_{R_x^{+}(\gamma)} f  \sum_{i=1}^{s}  1_{E_{i}^{x}}   \\
&\leq  \int_{R_x^{+}(\gamma)} f  \sum_{y  \in \mathcal{G}_x}   1_{R_{y}^{+}(\gamma)} = \sum_{y  \in \mathcal{G}_x}  \int_{R_y^{+}(\gamma)} f \leq N \int_{R_x^{+}(\gamma)} f.
\end{align*}
For the set $F_x = R_x^{+}(\gamma) \setminus E_{2N}^{x}$ we have
\[\int_{F_x} f = \int_{R_x^{+}(\gamma)}f - \int_{E_{2N}^{x}}f \geq \frac{1}{2}  \int_{R_x^{+}(\gamma)}f \gtrsim \lambda \abs{R_x^{+}(\gamma)}. \]

It remains to prove the bounded overlap of $F_x$. Take $z \in \bigcap_{i=1}^{k}F_{x_i}$. Take $x_{j}$ so that $l_{x_{j}}$ is maximal among $l_{x_i}$, $i=1,\ldots,k$. By \eqref{eq:cover21} there are at most $C_n$ rectangles with this maximal side length that contain $z$. Moreover, their subsets $F_x$ meet at most $2N$ upper halves of smaller rectangles, so that $k \leq 2NC_n$.
\end{proof}

\subsection{Weak type inequalities}
Now we can proceed to the proof of the weak type inequality. The proof makes use of a covering argument as in \cite{FMRO2011} adjusted to the present setting.

\begin{lemma}
\label{lemma:apweaktype}
Let $q \geq 1$, $w \in A_q^{+}(\gamma)$ and $f \in L^{q}(w)$.  There is a constant $C= C(n,\gamma,p,w,q)$ such that
\[w(\{x \in \mathbb{R}^{n+1}: M^{\gamma+} f > \lambda \}) \leq \frac{C }{\lambda^{p}} \int \abs{f}^{p} w \]
for every $\lambda > 0$.
\end{lemma}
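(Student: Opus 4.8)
The plan is to run a Vitali/Calder\'on--Zygmund-type argument on top of Lemma~\ref{lemma:cover2}. We may assume $f\ge 0$, and by inner regularity of the measure $w\,\mathrm{d}x$ it suffices to bound $w(K)$ for an arbitrary compact $K\subset\{M^{\gamma+}f>\lambda\}$. A routine reduction --- for each $x\in K$ pick a parabolic rectangle centered at $x$ with $\dashint_{R_x^+(\gamma)}f>\lambda$, enlarge it to the next dyadic side length (which costs only a dimensional factor in the average), translate it slightly upward in time so that $x$ lies in the interior of $R_x^-$, and pass to a finite subcover --- combined with Lemma~\ref{lemma:cover2} yields a finite index set $\Gamma$, parabolic rectangles $\{R_x\}_{x\in\Gamma}$ with dyadic sides and $\dashint_{R_x^+(\gamma)}f\eqsim\lambda$, and sets $F_x\subset R_x^+(\gamma)$, such that
\[
K\subset\bigcup_{x\in\Gamma}\overline{R_x^-},\qquad \frac1{\abs{R_x}}\int_{F_x}f\gtrsim\lambda,\qquad \sum_{x\in\Gamma}1_{F_x}\lesssim 1 .
\]
In particular $w(K)\le\sum_{x\in\Gamma}w(R_x^-)$.

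Write $\sigma=w^{1-q'}$. For fixed $x\in\Gamma$, H\"older's inequality with exponents $q,q'$, the identity $w^{-q'/q}=\sigma$, and $F_x\subset R_x^+(\gamma)$ give
\[
\lambda\abs{R_x}\lesssim\int_{F_x}f=\int_{F_x}f\,w^{1/q}\sigma^{1/q'}\le\Big(\int_{F_x}f^{q}w\Big)^{1/q}\sigma\big(R_x^+(\gamma)\big)^{1/q'} .
\]
The second ingredient is the following variant of the Muckenhoupt condition, valid for every $w\in A_q^+(\gamma)$ with a constant $C$ depending only on $n,p,q,\gamma$ and $[w]_{A_q^+(\gamma)}$:
\[
\Big(\dashint_{R^-}w\Big)\Big(\dashint_{R^+(\gamma)}\sigma\Big)^{q-1}\le C .
\]
Granting this and using $\abs{R_x^-}\eqsim\abs{R_x^+(\gamma)}\eqsim\abs{R_x}$, we obtain $w(R_x^-)\lesssim\abs{R_x}^{q}\,\sigma(R_x^+(\gamma))^{-(q-1)}$; inserting the H\"older estimate and simplifying with $q-1=q/q'$ gives $w(R_x^-)\lesssim\lambda^{-q}\int_{F_x}f^{q}w$. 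Summing over $x\in\Gamma$ and invoking the bounded overlap of the $F_x$,
\[
w(K)\le\sum_{x\in\Gamma}w(R_x^-)\lesssim\lambda^{-q}\sum_{x\in\Gamma}\int_{F_x}f^{q}w\lesssim\lambda^{-q}\int f^{q}w ,
\]
and taking the supremum over $K$ finishes the proof. (Here the right-hand side is $\lambda^{-q}\int\abs{f}^{q}w$; the exponents $p$ in the statement are to be read as $q$'s, since otherwise the hypothesis $f\in L^q(w)$ would not match the conclusion.)

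The step that actually requires work is the displayed variant of the $A_q^+(\gamma)$ condition, and here lies the main obstacle: the Muckenhoupt condition, and hence Lemma~\ref{lemma:cover2}, controls $w$ only on the \emph{lagged} lower half $R^-(\gamma)$, whereas the level set gets covered by the \emph{full} lower halves $R^-=R^-(0)$; since parabolic Muckenhoupt weights are nondoubling and may grow arbitrarily fast forward in time, the slab $R^-\setminus R^-(\gamma)$ cannot simply be discarded. I would establish the variant by the device used in the proof of Proposition~\ref{prop:properties}(vii): decompose $R^-$ into $R^-(\gamma)$ together with finitely many lagged lower halves $P_i^-(\gamma)$ of small parabolic rectangles $P_i$ (with $\sum_i\abs{P_i^-(\gamma)}\eqsim\abs{R^-}$), apply $A_q^+(\gamma)$ to $R$ and to each $P_i$, and use the forward-in-time comparison of Proposition~\ref{prop:properties}(iii) --- together with the subdivide-and-transport mechanism behind Proposition~\ref{prop:properties}(vi) --- to bound each $\dashint_{P_i^+(\gamma)}\sigma$ from below by a fixed multiple of $\dashint_{R^+(\gamma)}\sigma$; the hypothesis $p>1$ makes the accumulated temporal shifts negligible, exactly as in the proof of (vii), and the volume bookkeeping then closes. (Feeding Lemma~\ref{lemma:cover2} enlarged rectangles whose lagged lower halves already contain $R_x^-$ does not help, since it pushes the relevant upper halves upward in time, where $\sigma$ is small, so the same comparison would be needed.)
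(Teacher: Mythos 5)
Your proposal follows essentially the same route as the paper's proof: cover a compact subset of the level set via Lemma \ref{lemma:cover2}, apply H\"older's inequality on the sets $F_x$, invoke the Muckenhoupt condition for the mixed pair $(R^{-},R^{+}(\gamma))$, and finish with the bounded overlap of the $F_x$; you also correctly read the exponent $p$ in the statement as $q$, which is what the paper's own computation yields, and your treatment of the key point --- the validity of the $A_q^{+}$ condition for the configuration $(R^{-},R^{+}(\gamma))$ --- matches the paper, which likewise delegates it to the transport machinery of Proposition \ref{prop:properties}. However, two steps buried in your ``routine reduction'' are where the paper does genuine work that your sketch does not deliver as stated. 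First, Lemma \ref{lemma:cover2} requires the two-sided bound $\dashint_{R_x^{+}(\gamma)}f\eqsim\lambda$ (the upper bound is applied to the smaller rectangles $R_y$, $y\in\mathcal{G}_x$, in its proof); the paper secures it by estimating only $E=\{\lambda<M^{\gamma+}f\le 2\lambda\}$ and summing over dyadic annuli, whereas your claim that passing to the next dyadic scale ``costs only a dimensional factor'' is not quite right here, because the lagged upper halves $R(x,l)^{+}(\gamma)$ and $R(x,2l)^{+}(\gamma)$ are not nested --- one must first shrink $\gamma$, as the paper does explicitly at the start of its proof. Second, conclusion (i) of Lemma \ref{lemma:cover2} covers only the finite set $A$ of centers, not $K$: from $K\subset\bigcup_{x\in A}R_x^{-}$ and $A\subset\bigcup_{z\in\Gamma}\overline{R_z^{-}}$ one cannot conclude $w(K)\le\sum_{z\in\Gamma}w(R_z^{-})$. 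The paper bridges this by truncating $w$ to be bounded, bounding the admissible scales from below by $\xi$, and choosing $\epsilon$ with $w((1+\epsilon)R_z^{-})\le 2w(R_z^{-})$, so that every point of $K$ lies in some $(1+\epsilon)R_z^{-}$ with $z\in\Gamma$; some such device is needed to make your first display legitimate.
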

\begin{proof}
We first assume that $f > 0$ is bounded and compactly supported. Since
\begin{align*}
M^{\gamma+} f(x) &= \sup _{h > 0} \frac{1}{R(x,h, \gamma)^{+}} \int_{R(x,h, \gamma)^{+}} f \\
&\lesssim \sup _{i \in \mathbb{Z}} \frac{1}{R(x,2^{i},2^{-2} \gamma)^{+}} \int_{R(x,2^{i}, 2^{-2}\gamma )^{+}} f \\
&= \lim_{j \to -\infty} \sup_{i \in \mathbb{Z}; i> j} \frac{1}{R(x,2^{i}, \gamma')^{+}} \int_{R(x,2^{i}, \gamma')^{+}} f ,
\end{align*}
it suffices to consider rectangles with dyadic sidelengths bounded from below provided that we use smaller $\gamma$, and the claim will follow from monotone convergence. The actual value of $\gamma$ is not important because of Proposition \ref{prop:properties}. We may assume that $w$ is bounded from above and from below (see Proposition \ref{prop:truncation}). 

Moreover, it suffices to estimate $w(E)$, where 
\[
E= \{x \in \mathbb{R}^{n+1}: \lambda < M^{\gamma+} f \leq 2\lambda \}.
\] 
Once this has been done, we may sum up the estimates to get
\begin{align*}
w(\mathbb{R}^{n+1} \cap \{M^{ \gamma+} f > \lambda\}) 
&= \sum_{i=0}^{\infty} w(\mathbb{R}^{n+1} \cap \{2^{i}\lambda< M^{ \gamma+} f \le 2^{i+1}\lambda\}) 
\\
&\leq  \sum_{i=0}^{ \infty}  \frac{1}{2^{i}}\frac{C}{\lambda^{p}} \int \abs{f}^{p}w \leq \frac{C}{\lambda^{p}} \int \abs{f}^{p}w .
\end{align*}

Let $K \subset E$ be an arbitrary compact subset. Denote the lower bound for the sidelengths of the parabolic rectangles in the basis of the maximal operator by $\xi < 1$. For each $x \in K$ there is dyadic $l_x > \xi$ such that 
\[\dashint_{R^{+}(x,l_x, \gamma)} f \eqsim \lambda. \]
Denote $R_x := R(x,l_x)$. Since $f \in L^{1}$, we have
\[\abs{R_x^{+}(\gamma)} < \frac{1}{\lambda} \int f = C(\lambda, \norm{f}_{L^{1}}) < \infty.\] 
Thus $\sup_{x \in K} l_x < \infty $. Let $a = \min w$. There is $\epsilon  > 0$, uniform in $x$, such that 
\[w((1+\epsilon ) R_x^{-} \setminus R_x^{-} ) \leq a \xi^{n+p}   \leq w(R_x^{-})   \]
and $w((1+\epsilon) R_x^{-} ) \leq 2 w(  R_x^{-} ) $ hold for all $x \in K$. By compactness there is a finite collection of balls $B(x,\xi^{p} \epsilon / 2)$ to cover $K$. Denote the set of centers by $A$, apply Lemma \ref{lemma:cover2} to extract the subcollection $\Gamma$. Each $y \in K$ is in $B(x,\xi^{p} \epsilon / 2)$ with $x \in A$. Each $x \in A$ is in $R_z^{-}$ with $z \in \Gamma$, so each $y \in K$ is in $B(x,\xi^{p} \epsilon / 2) \subset (1+\epsilon)R_z^{-}$. Thus
\begin{align*}
w(K) &\leq \sum_{z \in \Gamma} w((1+\epsilon) R_z^{-}) \leq 2\sum_{z \in \Gamma} w(  R_z^{-}) \\
&\leq \frac{C}{\lambda^{q}} \sum_{z \in \Gamma} w(  R_z^{-}) \left( \frac{1}{\abs{R_z^{+}(\gamma)}} \int_{F_z} f \right)^{q} \\
&\leq \frac{C}{\lambda^{q}} \sum_{z \in \Gamma} \frac{ w(  R_z^{-})}{\abs{R_z^{-}}} \left(\dashint_{R_z^{+}(\gamma)} w^{1-q'} \right)^{q-1} \int_{F_z} f^{q} w \\
&\leq \frac{C}{\lambda^{q}} \int f^{q} w.
\end{align*}
In the last inequality we used the $A_q^{+}$ condition together with a modified configuration justified in Proposition \ref{prop:properties}, and the bounded overlap of the sets $F_z$.
\end{proof}

Now we are in a position to summarize the first results about the parabolic Muckenhoupt weights. We begin with the weak type characterization for the operator studied in \cite{Berkovits2011}. Along with this result, the definition in \cite{Berkovits2011} leads to all same results in $\mathbb{R}^{n+1}$ as the other definition from \cite{FMRO2011} does in $\mathbb{R}^{2}$. The next theorem holds even in the case $p =1$, which is otherwise excluded in this paper.  

\begin{theorem}
Let $w$ be a weight and $q > 1$. Then $w \in A_q^{+}(\gamma)$ with $\gamma = 0$ if and only if $M^{+}$ is of $w$-weighted weak type $(q,q)$.  
\end{theorem}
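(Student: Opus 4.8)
The plan is to obtain the theorem by combining the two results already at hand: Lemma \ref{lemma:weaktypeap} gives the necessity of the $A_q^{+}(0)$ condition, and Lemma \ref{lemma:apweaktype}, specialized to $\gamma = 0$, gives its sufficiency, once a routine truncation has removed the boundedness and compact support hypotheses from the latter. Throughout, recall that $M^{+} = M^{0+}$.

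For necessity I would simply observe that if $M^{+}$ is of $w$-weighted weak type $(q,q)$, then $M^{0+} \colon L^{q}(w) \to L^{q,\infty}(w)$ is bounded, which is exactly the hypothesis of Lemma \ref{lemma:weaktypeap} with $\gamma = 0$; that lemma then yields $w \in A_q^{+}(0)$.

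For sufficiency, assume $w \in A_q^{+}(0)$. Lemma \ref{lemma:apweaktype} with $\gamma = 0$ provides a constant $C = C(n,p,q,w)$ such that
\[
w(\{M^{0+} g > \lambda\}) \le \frac{C}{\lambda^{q}} \int \abs{g}^{q} w, \qquad \lambda > 0,
\]
for every bounded, compactly supported $g$. To upgrade this to arbitrary $f \in L^{q}(w)$ it suffices to take $f \ge 0$ and set $f_{k} = \min\{f,k\}\, 1_{B(0,k)}$, so that $f_{k} \uparrow f$ pointwise. For each fixed parabolic rectangle $R$ one has $\dashint_{R^{+}} f_{k} \uparrow \dashint_{R^{+}} f$ by monotone convergence, and taking the supremum over the rectangles centered at a given point gives $M^{0+} f_{k} \uparrow M^{0+} f$ pointwise; hence the sets $\{M^{0+} f_{k} > \lambda\}$ increase to $\{M^{0+} f > \lambda\}$. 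Continuity of the measure $w$ from below then gives
\[
w(\{M^{0+} f > \lambda\}) = \lim_{k \to \infty} w(\{M^{0+} f_{k} > \lambda\}) \le \frac{C}{\lambda^{q}} \sup_{k} \int \abs{f_{k}}^{q} w \le \frac{C}{\lambda^{q}} \int \abs{f}^{q} w,
\]
the desired weak type $(q,q)$ bound. I do not expect a genuine obstacle in this synthesis, since the substance is already contained in the two lemmas; the only point requiring a moment's care is the interchange of $M^{0+}$ with the monotone limit, which is the elementary computation just indicated.

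Finally, for the assertion that the equivalence persists when $p = 1$: the only places where $p > 1$ entered the proofs of Lemmas \ref{lemma:weaktypeap}, \ref{lemma:apweaktype} and of the covering Lemmas \ref{lemma:cover1}, \ref{lemma:cover2} are Proposition \ref{prop:properties}(vii) and the reduction in the proof of Lemma \ref{lemma:apweaktype} to dyadic side lengths bounded below, both of which serve only to shrink the lag parameter. When $\gamma = 0$ there is nothing to shrink: the reduction to dyadic side lengths $2^{i}$ follows directly from the inclusion $R(x,h)^{+} \subset R(x,2^{i+1})^{+}$ together with $\abs{R(x,2^{i+1})^{+}} \le 2^{n+p}\abs{R(x,h)^{+}}$ whenever $2^{i} \le h < 2^{i+1}$, and no change of $\gamma$ is needed, while the geometric and volume estimates in Lemmas \ref{lemma:cover1} and \ref{lemma:cover2} only ever use $p \ge 1$ (the geometric series over the scales still converges at $p = 1$, with a $p$-dependent constant). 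Hence the whole argument goes through verbatim for $p = 1$, which is the single place where the exclusion $p > 1$ has to be revisited.
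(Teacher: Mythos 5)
Your proposal is correct and matches the paper's proof, which is exactly the combination of Lemma \ref{lemma:weaktypeap} (necessity) and Lemma \ref{lemma:apweaktype} (sufficiency); the monotone-convergence truncation and the $p=1$ remark you spell out are details the paper leaves implicit.
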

\begin{proof}
Combine Lemma \ref{lemma:weaktypeap} and Lemma \ref{lemma:apweaktype}.
\end{proof}

The next theorem is the first main result of this paper. Observe that all the parabolic operators $M^{\gamma+}$ with $\gamma  \in (0,1) $ have the same class of good weights. This interesting phenomenon seems to be related to the fact that $p> 1$.

\begin{theorem}
\label{thm:weakchar}
Let $w$ be a weight and $q > 1$. Then the following conditions are equivalent:
\begin{itemize}
\item[(i)] $w \in A_q^{+}$ for some $\gamma \in (0,1)$,
\item[(ii)] $w \in A_q^{+}$ for all $\gamma \in (0,1)$,
\item[(iii)] there is $\gamma \in (0,1)$ such that the operator $M^{\gamma+}$ is of weighted weak type $(q,q)$ with the weight $w$,
\item[(iv)] the operator $M^{\gamma+}$ is of weighted weak type $(q,q)$ with the weight $w$ for all $\gamma \in (0,1)$.
\end{itemize}
\end{theorem}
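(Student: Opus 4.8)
\section*{Proof proposal}

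The plan is to close the cycle of implications $(iv) \Rightarrow (iii) \Rightarrow (i) \Rightarrow (ii) \Rightarrow (iv)$, reading off each arrow from a result already in hand, so that the theorem reduces to keeping track of the time lag $\gamma$.

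First, $(iv) \Rightarrow (iii)$ is immediate. For $(iii) \Rightarrow (i)$ I would take the $\gamma_0 \in (0,1)$ supplied by $(iii)$, for which $M^{\gamma_0 +}$ maps $L^q(w)$ into $L^{q,\infty}(w)$, and invoke Lemma \ref{lemma:weaktypeap} with this same $\gamma_0$ to conclude $w \in A_q^+(\gamma_0)$, which is $(i)$. The step $(i) \Rightarrow (ii)$ is exactly Proposition \ref{prop:properties}(vii): if $w$ lies in $A_q^+(\gamma)$ for a single $\gamma \in (0,1)$ (indeed even for $\gamma = 0$), then it lies in $A_q^+(\gamma')$ for every $\gamma' \in (0,1)$. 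Finally, for $(ii) \Rightarrow (iv)$ I would fix an arbitrary $\gamma \in (0,1)$ and apply Lemma \ref{lemma:apweaktype}, whose hypothesis $w \in A_q^+(\gamma)$ is guaranteed by $(ii)$, to obtain the weak type $(q,q)$ bound for $M^{\gamma+}$ with weight $w$; since $\gamma$ ranges over all of $(0,1)$, this is $(iv)$, and the cycle closes.

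All of the analytic weight has already been carried by the earlier results — the covering Lemmas \ref{lemma:cover1} and \ref{lemma:cover2} behind Lemma \ref{lemma:apweaktype}, and the parabolic scaling and chaining argument behind Proposition \ref{prop:properties}(vii) — so no serious obstacle remains here. The one genuine point of care is the matching of time lags: Lemma \ref{lemma:weaktypeap} delivers the parabolic Muckenhoupt condition at precisely the $\gamma$ of the operator, while Lemma \ref{lemma:apweaktype} consumes it at precisely the $\gamma$ of the operator one wants to bound, and these two values need not coincide a priori. It is the insensitivity of the class $A_q^+(\gamma)$ to the value of $\gamma \in (0,1)$ — a feature tied to $p > 1$ — that lets one pass freely between them, which is also the reason $\gamma = 0$ is deliberately excluded from the four conditions, since Proposition \ref{prop:properties}(vii) only transfers membership into $A_q^+(\gamma')$ for $\gamma' \in (0,1)$.
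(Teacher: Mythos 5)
Your proof is correct and follows exactly the route the paper intends: its one-line proof cites precisely Lemma \ref{lemma:weaktypeap}, Lemma \ref{lemma:apweaktype} and Proposition \ref{prop:properties}(vii), which are the three ingredients you assemble into the cycle $(iv)\Rightarrow(iii)\Rightarrow(i)\Rightarrow(ii)\Rightarrow(iv)$. Your additional remark about matching the time lags via the $\gamma$-insensitivity of $A_q^+(\gamma)$ is exactly the point that makes the combination work.
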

\begin{proof}
Lemma \ref{lemma:weaktypeap}, Lemma \ref{lemma:apweaktype} and (vii) of Proposition \ref{prop:properties}.
\end{proof}

\section{Reverse H\"older inequalities}
Parabolic reverse H\"older inequalities had already been studied in \cite{Berkovits2011}, and they were used to prove  sufficiency of the nonlagged Muckenhoupt condition for the lagged strong type inequality. The proof included the classical argument with self-improving properties and interpolation. Our reverse H\"older inequality will lead to an even stronger self-improving property, and this will give us a characterization of the strong type inequality. We will encounter several challenges. For example, our ambient space does not have the usual dyadic structure. In the classical Muckenhoupt theory this would not be a problem, but here the forwarding in time gives new complications. We will first prove an estimate for the level sets, and then we will use it to conclude the reverse H\"older inequality.  

\begin{lemma}
\label{lemma:quasistein}
Let $w \in A_q^{+}(\gamma)$, $\widetilde{R}_0 = Q_0 \times (\tau,\tau+ \frac{3}{2}l_0^{p})$ and $\widehat{R}_0 = Q_0 \times (\tau,\tau +  l_0^{p})$. 
Then there exist $C = C([w]_{A_q^{+}(\gamma)},n,p)$ and $\beta \in (0,1)$ such that for every $\lambda \geq w_{R_0^{-}}$, we have
\[w(\widehat{R}_0 \cap \{w > \lambda\} ) \leq C \lambda \abs{\widetilde{R}_0 \cap \{w > \beta \lambda\}}. \]

\end{lemma}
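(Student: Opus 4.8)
The plan is to run a Calderón–Zygmund stopping-time argument adapted to the parabolic geometry, extracting from the level set $\{w>\lambda\}$ inside $\widehat R_0$ a family of parabolic rectangles on which $w$ has controlled average, and then to use the forward-in-time $A_q^{+}(\gamma)$ condition to pass from an upper bound on $w$ over these rectangles to a lower bound on $|\widetilde R_0 \cap \{w>\beta\lambda\}|$. Concretely, I would first fix $\lambda \ge w_{R_0^{-}}$ and perform a parabolic dyadic-type decomposition of $\widehat R_0$: subdivide $Q_0$ dyadically and the time interval $(\tau,\tau+l_0^p)$ into matching pieces, and select maximal sub-rectangles $R_i$ on which the average of $w$ (over an appropriate forward half) first exceeds a fixed large multiple of $\lambda$. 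The extra room in $\widetilde R_0$ (the interval has length $\tfrac32 l_0^p$ rather than $l_0^p$) is exactly what lets every such selected $R_i^{-}$, together with the forward-companion $R_i^{+}(\gamma)$ needed to apply the Muckenhoupt condition, stay inside $\widetilde R_0$; this is the role of the extra half-length and must be tracked carefully when choosing the stopping scale.

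The second step is the pointwise/measure estimate. On the complement of the union of selected rectangles, a Lebesgue-type differentiation (along the nested parabolic rectangles used in the stopping time) shows $w \le C\lambda$ a.e., so
\[
w(\widehat R_0 \cap \{w>\lambda\}) \le \sum_i w(R_i \cap \{w>\lambda\}) \le \sum_i w(R_i).
\]
Here $R_i$ should be read as the backward-type half $R_i^{-}$ (or the relevant slab) on which the average of $w$ is comparable to $\lambda$ by maximality, so that $w(R_i) \lesssim \lambda |R_i|$. It then remains to produce, for each $i$, a definite-proportion subset of some forward rectangle $R_i^{+}(\gamma) \subset \widetilde R_0$ lying in $\{w>\beta\lambda\}$, with the $R_i^{+}(\gamma)$ having bounded overlap.

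The key mechanism — and the step I expect to be the main obstacle — is exactly this last point: converting the information ``$\dashint_{R_i^{-}(\gamma)} w \gtrsim \lambda$'' into ``$|R_i^{+}(\gamma) \cap \{w>\beta\lambda\}| \gtrsim |R_i^{+}(\gamma)|$'' in the forward direction. This is where $w \in A_q^{+}(\gamma)$ enters decisively: by property (vi) of Proposition \ref{prop:properties}, for any $S \subset R_i^{+}(\gamma)$ one has $w(R_i^{-}(\gamma)) \le C(|R_i^{-}(\gamma)|/|S|)^q w(S)$, so taking $S = R_i^{+}(\gamma) \cap \{w \le \beta\lambda\}$ and assuming for contradiction that $|S|$ is close to $|R_i^{+}(\gamma)|$ forces $w(S) \le \beta\lambda|S|$ to be too small relative to $w(R_i^{-}(\gamma)) \gtrsim \lambda |R_i^{-}(\gamma)|$, once $\beta$ is chosen small enough depending only on $[w]_{A_q^{+}(\gamma)}$, $n$, $p$. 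This yields $|R_i^{+}(\gamma) \cap \{w>\beta\lambda\}| \ge c|R_i^{+}(\gamma)| \gtrsim |R_i|$. Summing and using the bounded overlap of the forward rectangles $\{R_i^{+}(\gamma)\}$ inside $\widetilde R_0$ (which I would arrange either by a Vitali-type selection among the stopping rectangles or directly from the dyadic structure, exactly as the overlap bounds are obtained in Lemma \ref{lemma:cover2}), I get
\[
w(\widehat R_0 \cap \{w>\lambda\}) \le \sum_i w(R_i) \lesssim \lambda \sum_i |R_i| \lesssim \lambda \sum_i |R_i^{+}(\gamma) \cap \{w>\beta\lambda\}| \lesssim \lambda\, |\widetilde R_0 \cap \{w>\beta\lambda\}|,
\]
which is the claimed inequality. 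The subtle points to get right are: ensuring the forward companions do not escape $\widetilde R_0$ (handled by the $\tfrac32 l_0^p$ slack plus $p>1$ scaling, à la Proposition \ref{prop:properties}(vii)), the bounded-overlap bookkeeping, and the differentiation argument on the complement in the absence of a genuine dyadic lattice — for the latter I would cover by countably many translated dyadic parabolic grids, as is standard.
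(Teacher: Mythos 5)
Your plan follows essentially the same route as the paper: a Calder\'on--Zygmund stopping-time selection inside $\widehat R_0$ at height $\lambda$, the bound $w(\widehat R_0\cap\{w>\lambda\})\lesssim\lambda\sum_i|R_i|$ from maximality of the stopping rectangles, and then the conversion of $\dashint_{R_i^-}w\gtrsim\lambda$ into $|\check R_i\cap\{w>\beta\lambda\}|\gtrsim|\check R_i|$ for a forward companion $\check R_i\subset\widetilde R_0$. Your conversion step via Proposition \ref{prop:properties}(vi) is correct and is equivalent to what the paper does (the paper applies Chebyshev to $w^{1-q'}$ directly: $|\check R\cap\{w\le\beta w_{\widehat R}\}|\le\beta^{q'-1}w_{\widehat R}^{q'-1}\int_{\check R}w^{1-q'}\le(C\beta)^{q'-1}|\check R|$). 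Your observation that the $\tfrac32 l_0^p$ slack together with $p>1$ keeps the forward companions inside $\widetilde R_0$ is also the right accounting.

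The one place where your proposal underestimates the difficulty is the bounded overlap of the forward companions, which you dispatch with ``Vitali-type selection or directly from the dyadic structure.'' Neither works off the shelf here: there is no dyadic lattice in time, the stopping rectangles occur at all scales and all temporal positions, and what must be controlled is the overlap of the \emph{forward} halves $\check R_i$, not of the selected sets themselves. This is precisely why the paper runs its slicing construction (maximal dyadic cubes on each time slice $E_t$, a one-dimensional covering argument in $t$ giving overlap $2$ on each fixed dyadic scale) and then invokes Lemma \ref{lemma:cover1} through the greedy selection $\Sigma_1,\Sigma_2,\dots$ to absorb the smaller rectangles whose upper halves meet a chosen one. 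Without some version of that two-stage argument, the sum $\sum_i|R_i|$ over all scales cannot be converted into $\lesssim|\widetilde R_0\cap\{w>\beta\lambda\}|$, so you should regard this as the step that still needs to be written out rather than a routine covering remark. Everything else in your outline matches the paper's proof.
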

\begin{proof}
We introduce some notation first. For a parabolic rectangle $R = Q \times (t_0,t_0 + 2 l(Q)^{p})$, we define
\begin{align}
\label{def:hattu}
\widehat{R} &= Q \times (t_0,t_0 + l(Q)^{p}) \quad \textrm{and} \\
\label{def:mato}
\check{R} &= Q \times (t_0 +(1+\gamma) l(Q)^{p}, \textstyle{\frac{3}{2}} l(Q)^{p}   ).
\end{align} 
Here $\gamma \in (0,1/2)$, and by Proposition \ref{prop:properties}, we may replace the sets $R^{\pm}(\gamma)$ everywhere by the sets $\widehat{R}$ and $\check{R}$. Note that $\widehat{R} = R^{-}$. The hat is used to emphasize that $\widehat{R} $ and $\check{R}$ are admissible in the $A_q^{+}$ condition, whereas $R^{-}$ is used as the set should be interpreted as a part of a parabolic rectangle. For $\beta \in (0,1)$, the condition $A_q^{+}(\gamma)$ gives
\[
\abs{ \check{R} \cap \{w \leq \beta w_{\widehat{R}}\}}  \leq \beta^{p'-1} \int_{\check{R}} \frac{ w ^{1-p'}}{w_{\widehat{R}}^{1-p'}} 
\leq (\beta C)^{p'-1} \abs{\check{R}}.
\]
Taking $\alpha \in (0,1)$, we may choose $\beta$ such that 
\begin{equation}
\label{eq:quasi0}
\abs{\check{R} \cap \{w > \beta w_{\widehat{R}}\}} > \alpha \abs{\check{R}}.
\end{equation}

Let 
\[\mathcal{B} = \{Q \times (t-\textstyle{\frac{ 1}{2}} l(Q)^{p},t+\textstyle{\frac{ 1}{2} }l(Q)^{p} ): Q \subset Q_0 \ \textrm{dyadic}, \ t \in (0,l^{p}) \}.\]
Here dyadic means dyadic with respect to $Q_0$, and hence the collection $\mathcal{B}$ consists of the lower parts $\widehat{R}$ of spatially dyadic short parabolic rectangles interpreted as metric balls with respect to
\[d((x,t),(x',t')) = \max \{ \abs{x-x'}_\infty, C_{ p} \abs{t-t'}^{1/p}  \}. \]
Notice that $(n+1)$-dimensional Lebesgue measure is doubling with respect to $d$.
 
We define a noncentered maximal function with respect to $\mathcal{B}$ as
\[M_{\mathcal{B}} f(x) = \sup_{\{x\} \subset B \in \mathcal{B}} \dashint_{B} f, \]
where the supremum is taken over all sets in $\mathcal{B}$ that contain $x$. 
By the Lebesgue differentiation theorem, we have 
\[\widehat{R}_0  \cap \{w > \lambda\} \subset \{M_{\mathcal{B}} (1_{\widehat{R}_0}w) > \lambda\} =: E\] 
up to a null set. Next we will construct a Calder\'on-Zygmund type cover. The idea is to use dyadic structure to deal with spatial coordinates, then separate the scales, and finally conclude, with one-dimensional arguments, the assumptions of Lemma \ref{lemma:cover1}.  

Define the slice $E_t = E \cap (\mathbb{R}^{n} \times \{t\})$ for fixed $t$. Since $\lambda \geq w_{\widehat{R}_0}$, we may find a collection of maximal dyadic cubes $Q_i^{t} \times \{t\} \subset E_t$ such that for each $Q_i$ there is $B_i^{t} \in \mathcal{B}$ with 
\[B_i^{t} \cap (Q_0 \times \{t\}) = Q_i^{t} \quad \textrm{and} \quad \dashint_{B_i^{t}}w > \lambda.\]
Clearly $\{B_i^{t}\}_{i}$ is pairwise disjoint and covers $E_t$. Moreover, since $Q_i^{t}$ is maximal, the dyadic parent $\widehat{Q}_i^{t}$ of $Q_i^{t}$ satisfies 
\[ \dashint_{\widehat{Q}_i^{t} \times I} w  \leq \lambda\]
for all intervals $I \ni t$ with $\abs{I} = l(\widehat{Q}_{i}^{t})^{p}$ and especially for the ones with $\widehat{Q}_{i}^{t} \times I \supset B_i^{t} $. Hence
\begin{equation}
\label{eq:quasi1}
\lambda < \dashint_{B_i^{t}} w \lesssim \dashint_{\widehat{Q}_i^{t} \times I} w \leq \lambda. 
\end{equation}

We gather the collections corresponding to $t \in (\tau, \tau + l_0^{p})$ together, and separate the resulting collection to subcollections as follows:
\[\mathcal{Q} = \{B_i^{t} :i \in\mathbb{Z}, t \in (0,l^{p})  \} = \bigcup_{j\in \mathbb{Z}} \mathcal{Q}_j,\]
where $\mathcal{Q}_j = \{Q \times I \in \mathcal{Q}:  \abs{Q} = 2^{-jn} \abs{Q_0}  \}$.
Each $\mathcal{Q}_j$ can be partitioned into subcollections corresponding to different spatial dyadic cubes $\mathcal{Q}_j = \bigcup_{i} \mathcal{Q}_{ji}$. Here 
\[\mathcal{Q}_{ji} = \{Q \times I \in \mathcal{Q}_j: Q = Q_i^{t} , t \in (\tau, \tau + l^{p}) \}.\]
If needed, we may reindex the Calder\'on-Zygmund cubes canonically with $j$ and $i$ such that $j$ tells the dyadic generation and $i$ specifies the cube such that $Q_{ji}^{t} =  Q_{ji}^{t'}$. Then
\[\bigcup_{B \in \mathcal{Q}_{ij} } B \cap \bigcup _{B' \in \mathcal{Q}_{i'j} } B' = \varnothing\]
whenever $i \neq i'$. Thus we may identify $\mathcal{Q}_{ji}$ with a collection of intervals and extract a covering subcollection with an overlap bounded by $2$. Hence we get a covering subcollection of $\mathcal{Q}_j$ with an overlap bounded by $2$, and hence a countable covering subcollection of $\mathcal{Q}$ such that its restriction to any dyadic scale has an overlap bounded by $2$. Denote the final collection by $\mathcal{F}$. Its elements are interpreted as lower halves of parabolic rectangles, that is, there are parabolic rectangles $P $ with $P^{-} \in \mathcal{F}$.  

Collect the parabolic halves $P^{-} \in \mathcal{F}$ with maximal side length to the collection $\Sigma_1$. Recursively, if $\Sigma_k$ is chosen, collect $P^{-} \in \mathcal{F}$ with equal maximal size such that 
\[P^{+} \cap \bigcup_{Q^{-} \in \bigcup_{i = 1 }^{k} \Sigma_i} Q^{+} = \varnothing \]
to the collection $\Sigma_{k+1}$. The collections $\Sigma_k$ share no elements, and their internal overlap is bounded by $2$. Since each $A \in \Sigma_k$ has equal size, the bounded overlap is inherited by the collection
\[\Sigma_k^{+} := \{A^{+}: A^{-} \in \Sigma_k \}.\]  
Moreover, by construction, if $A^{+} \in \Sigma_i^{+}$ and $B^{+} \in \Sigma_j^{+}$ with $i \neq j$ then $A^{+} \cap B^{+} = \varnothing$. Hence 
\[\mathcal{F}' := \bigcup_{i} \Sigma_i\]
is a collection such that 
\[\sum_{P^{-} \in \mathcal{F}'} 1_{P^{+}} \leq 2. \] 

According to \eqref{eq:quasi1} and Lemma \ref{lemma:cover1}, we get
\begin{align*}
w(E) &\leq \sum_{B \in \mathcal{F}} w(B) \lesssim \sum_{B \in \mathcal{F}} \lambda \abs{B} \\
& \leq   \sum_{P^{-} \in \mathcal{F}'}\left(\lambda  \abs{P^{-}} +  \sum_{\substack{B \in \mathcal{F} \\ B^{+} \cap P^{+} \neq \varnothing \\ \abs{B} < \abs{P}}}  \lambda \abs{B} \right)     \lesssim \lambda \sum_{P^{-} \in \mathcal{F}'} \abs{P^{+}}.
\end{align*}
Then 
\begin{align*}
w(E) &\lesssim_{\gamma} \lambda \sum_{P^{-} \in \mathcal{F}'} \abs{\check{P}} \lesssim \sum_{P^{-} \in \mathcal{F}'}   \lambda \abs{\check{P} \cap \{w > \beta \lambda \}} \\
& \leq   \int_{\bigcup_{S^{-} \in \mathcal{F}'} \check{S} \cap \{w > \beta \lambda \}} \sum_{P^{-} \in \mathcal{F}'} 1_{P^{+}}  \lesssim \lambda \abs{\widetilde{R}_0 \cap \{w > \beta \lambda \}}.
\end{align*}
\end{proof}

The fact that the sets in the estimate given by the above lemma are not equal is reflected to the reverse H\"older inequality as a time lag. This phenomenon is unavoidable, and it was noticed already in the one-dimensional case, see for instance \cite{Martin1993}.

\begin{theorem}
\label{thm:RHI}
Let $w \in A_q^{+}(\gamma)$ with $\gamma  \in (0,1)$. Then there exist $\delta > 0$ and a constant $C$ independent of $R$ such that
\[\left(\dashint_{R^{-}(0)} w^{\delta + 1} \right)^{1/(1+\delta)} \leq C \dashint _{R^{+}(0)} w .\]
Furthermore, there exists $\epsilon > 0$ such that $w \in A_{q-\epsilon}^{+}(\gamma)$.
\end{theorem}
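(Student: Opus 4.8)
The plan is to derive Theorem~\ref{thm:RHI} from the level-set estimate in Lemma~\ref{lemma:quasistein} by the standard Calder\'on--Zygmund self-improvement scheme, taking care that the lag forces the two sides of the inequality to live on rectangles of slightly different temporal length. First I would fix a parabolic rectangle $R_0 = Q_0 \times (t_0 - l_0^p, t_0 + l_0^p)$ and, after a harmless temporal shift, set $\widehat{R}_0 = Q_0 \times (\tau, \tau + l_0^p)$ and $\widetilde{R}_0 = Q_0 \times (\tau, \tau + \tfrac32 l_0^p)$ as in the lemma. Writing $a = w_{R_0^-}$, I would introduce the truncated distribution function
\[
\phi(\lambda) = w\bigl(\widehat{R}_0 \cap \{w > \lambda\}\bigr), \qquad \lambda \geq a,
\]
and apply Lemma~\ref{lemma:quasistein} on $\widehat{R}_0$; but crucially the right-hand side of that lemma involves $|\widetilde{R}_0 \cap \{w > \beta\lambda\}|$, a \emph{Lebesgue}-measure level set on the slightly larger rectangle $\widetilde{R}_0$. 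To iterate, I would cover $\widetilde{R}_0$ by boundedly many sub-rectangles each of which is the lower half of a parabolic rectangle whose $A_q^+(\gamma)$-partners still sit inside the geometry governed by $R_0$ (this is exactly the kind of subdivision used in the proof of Proposition~\ref{prop:properties}(vii) and in Lemma~\ref{lemma:quasistein} itself), so that the average of $w$ over each such piece is comparable to $w_{R_0^-}$; this lets me replace $|\widetilde{R}_0 \cap \{w > \beta\lambda\}|$ by a constant multiple of $\tfrac1\lambda w(\widehat{R}_0 \cap \{w > c\beta\lambda\})$ plus a harmless term, i.e.\ close the estimate back to $\phi$ evaluated at a comparable level.

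Next I would run the classical computation: using $\int_{\widehat{R}_0} w^{1+\delta} = \delta \int_a^\infty \lambda^{\delta-1} \phi(\lambda)\,\mathrm{d}\lambda + a^\delta \phi(a)$ together with the inequality $\phi(\lambda) \lesssim \lambda\,|\widetilde{R}_0 \cap \{w > \beta\lambda\}|$ and, after the covering step, $\phi(\lambda) \lesssim \tfrac{C}{\beta}\,\phi(c\beta\lambda) + (\text{main term})$, one gets for small enough $\delta>0$
\[
\dashint_{\widehat{R}_0} w^{1+\delta} \leq C\, a^\delta \dashint_{\widehat{R}_0} w \leq C \Bigl(\dashint_{R_0^+(0)} w\Bigr)^{\delta} \dashint_{R_0^+(0)} w,
\]
where in the last step I used $A_q^+(\gamma)$ (and Proposition~\ref{prop:properties}(iii)--(iv) to pass between $R_0^-(\gamma)$, $R_0^-(0)=\widehat{R}_0$ and $R_0^+(0)$ at the cost of a constant). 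Taking $(1+\delta)$-th roots yields
\[
\Bigl(\dashint_{R_0^-(0)} w^{1+\delta}\Bigr)^{1/(1+\delta)} \leq C \dashint_{R_0^+(0)} w,
\]
which is the asserted reverse H\"older inequality; the constants are uniform in $R_0$ because Lemma~\ref{lemma:quasistein} and all the covering constants depend only on $[w]_{A_q^+(\gamma)}$, $n$ and $p$.

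For the final clause, that $w \in A_{q-\epsilon}^+(\gamma)$ for some $\epsilon>0$, I would apply the reverse H\"older inequality not to $w$ but to $\sigma = w^{1-q'}$: by Proposition~\ref{prop:properties}(ii), $\sigma \in A_{q'}^-(\gamma)$, and the $A^-$ version of the theorem just proved gives a $\delta'>0$ with
\[
\Bigl(\dashint_{R^+(0)} \sigma^{1+\delta'}\Bigr)^{1/(1+\delta')} \leq C \dashint_{R^-(0)} \sigma.
\]
Choosing $\epsilon>0$ small enough that $1-(q-\epsilon)' = (1+\delta')(1-q')$ — equivalently $(q-\epsilon)'-1 = (1+\delta')(q'-1)$, which has a solution with $\epsilon \in (0,q-1)$ since the right side depends continuously on $\delta'$ and equals $q'-1$ at $\delta'=0$ — one plugs this higher integrability of $\sigma$ into the definition of $A_{q-\epsilon}^+(\gamma)$ and applies H\"older's inequality (together with Proposition~\ref{prop:properties}(iii)--(iv) to align the lagged rectangles) to bound $[w]_{A_{q-\epsilon}^+(\gamma)}$ by a constant times $[w]_{A_q^+(\gamma)}^{\text{const}}$; this is the usual openness-of-$A_q$ argument, now valid in the parabolic setting precisely because the reverse H\"older inequality is available on both sides.

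\medskip
I expect the main obstacle to be the \emph{covering/alignment step} bridging Lemma~\ref{lemma:quasistein} and the iteration: Lemma~\ref{lemma:quasistein} outputs a Lebesgue level set on $\widetilde{R}_0$ (a rectangle of temporal length $\tfrac32 l_0^p$), whereas to iterate one needs a weighted level set on a rectangle of the same shape as the one feeding the lemma. Making this passage quantitative — subdividing $\widetilde{R}_0$ into controlled pieces on which $w$ has average comparable to $w_{R_0^-}$, with the number of pieces depending only on $n$, $p$ — is where the parabolic geometry (the scaling $t\sim l^p$ with $p>1$, and the lag) genuinely enters, and it is the part that has no counterpart in the classical elliptic proof. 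Everything else is the textbook self-improvement and openness argument, modulo bookkeeping with Proposition~\ref{prop:properties}.
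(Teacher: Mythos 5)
Your high-level plan — feed the level-set estimate of Lemma~\ref{lemma:quasistein} into the distribution-function identity, absorb the small term for $\delta$ small, then deduce openness by dualizing to $\sigma = w^{1-q'}$ — matches the paper's, and the last part (openness via the reverse H\"older inequality for $\sigma \in A_{q'}^-(\gamma)$) is exactly what the paper does. The gap is in the one step you yourself flag as the crux, and the fix you propose there does not work.

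You propose to subdivide $\widetilde{R}_0 = Q_0 \times (\tau, \tau + \tfrac{3}{2}l_0^p)$ into boundedly many pieces ``so that the average of $w$ over each such piece is comparable to $w_{R_0^-}$,'' and thereby to turn $|\widetilde{R}_0 \cap \{w > \beta\lambda\}|$ back into $\tfrac{1}{\lambda}\,w(\widehat{R}_0 \cap \{w > c\beta\lambda\})$, closing the recursion on the same rectangle. This comparability is false for parabolic $A_q^+$ weights: the segment $\widetilde{R}_0 \setminus \widehat{R}_0$ lies strictly \emph{forward} in time of $R_0^-$, and Proposition~\ref{prop:properties}(iii) only gives the one-directional bound $\dashint_{R^-(\gamma)} w \lesssim \dashint_{t+R^-(\gamma)} w$ for $t>0$. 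Nothing bounds forward averages by backward ones — indeed any weight increasing in $t$, as fast as you like, is in $A_q^+$. So the replacement $|\widetilde{R}_0 \cap \{w>\beta\lambda\}| \lesssim \tfrac{1}{\lambda}\phi(c\beta\lambda) + (\text{harmless})$ has no justification, and without it the self-improvement scheme does not close.

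The paper's actual device is different and is worth internalizing: it never returns to the original rectangle. After integrating the level-set estimate it arrives at
\[
\int_{R^-} w^{1+\delta} \leq C_0 |R^-| w_{R^-}^{1+\delta} + C_1 \delta \int_{\widetilde{R}\setminus R^-} w^{1+\delta},
\]
and then \emph{iterates forward in time}: $\widetilde{R}\setminus R^-$ (temporal length $\tfrac12 l^p$) is covered by $M_{np}$ lower halves $R_i^{1-}$ of smaller parabolic rectangles with $l_1^p = \tfrac12 l^p$, to each of which the same inequality is applied, producing a further forward tail of temporal length $\tfrac14 l^p$, and so on. The cumulative temporal reach of the iterates is $\sum_{k\ge 0} 2^{-k} l^p = 2l^p$, so every generation of rectangles stays inside $R = Q \times (0, 2l^p)$; this is what lets the paper bound $\sum_i \int_{R_i^{j-}} w$ by $\int_R w$ uniformly in $j$, and the resulting series $\sum_j (C_0 C_1 \delta)^j 2^{-j\delta n}$ converges for $\delta$ small. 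The remainder term is controlled by first truncating $w$ to $\min\{w,m\}$ (so that it tends to $0$ with $N$) and then letting $m\to\infty$ by monotone convergence — a technicality your sketch also omits. In short: the parabolic lack of backward control is handled by always moving forward with geometrically shrinking rectangles confined to $R$, not by closing the estimate on the starting rectangle.
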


\begin{proof}
We will consider a truncated weight $w := \min\{w, m\}$ in order to make quantities bounded. At the end, the claim for general weights will follow by passing to the limit as $m\to\infty$. Without loss of generality, we may take $R^{-} = Q \times (0,l^{p})$. Define $\widehat{R}$ and $\check{R}$ as in the previous lemma (see \eqref{def:hattu} and \eqref{def:mato}). In addition, let $\widetilde{R}$ be the convex hull of $\widehat{R}\cup \check{R}$. 

Let $E = \{w > w_{R^{-}}\}$. By Lemma \ref{lemma:quasistein}
\begin{align*}
\int_{R^{-} \cap E} w^{\delta +1}  
&= \abs{R^{-} \cap E} w_{R^{-}}^{\delta +1} + \delta \int_{w_{R^{-}}}^{\infty} \lambda^{\delta - 1} w(\{R^{-} \cap \{w > \lambda\} \}) \dla   \\
&\leq \abs{R^{-} \cap E} w_{R^{-}}^{\delta +1} + C  \delta \int_{w_{R^{-}}}^{\infty} \lambda^{\delta - 1} \abs{\{R  \cap \{w > \beta\lambda\} \}} \dla   \\
&\leq  \abs{R^{-} \cap E} w_{R^{-}}^{\delta +1} + C  \delta \int_{\widetilde{R}  \cap E} w^{\delta +1} ,
\end{align*}
which implies that
\begin{align*}
\int_{R^{-} \cap E }w^{\delta +1} & \leq \frac{1}{1-\delta C} \left(   \abs{R^{-} \cap E} w_{R^{-}}^{\delta +1} + C  \delta \int_{\widetilde{R} \setminus (R^{-}  \cap E)} w^{\delta +1}\right) .
\end{align*} 
Consequently
\begin{align}
\label{eq:rhi1}
\int_{R^{-} }w^{\delta +1} & \leq \frac{2-\delta C}{1-\delta C}   \abs{R^{-}} w_{R^{-}}^{\delta +1} + \frac{C \delta}{1-\delta C}   \int_{\widetilde{R}\setminus R^{-}   } w^{\delta +1} \nonumber \\
& = C_0   \abs{R^{-}} w_{R^{-}}^{\delta +1} + C_1  \delta \int_{\widetilde{R}\setminus R^{-}   } w^{\delta +1}.
\end{align}

Then we choose $l_1^{p} = 2^{-1} l^{p}$. We can cover $Q$ by $M_{np}$ subcubes $\{Q_i^{1}\}_{i=1}^{M_{np}}$ with $l(Q_i^{1}) = l_1$. Their overlap is bounded by $M_{np}$, and so is the overlap of the rectangles
\[\{R_i^{1-}\} = Q_i \times (l^{p}, \frac{3}{2} l^{p}) \]
that cover $\widetilde{R} \setminus R^{-}$ and share the dimensions of the original $R^{-}$. Hence we are in position to iterate. The rectangles $R_{ij}^{(k+1)-}$ are obtained from $R_i^{k-}$ as $R_i^{1-}$ were obtained from $R^{-} =: R_i^{0-}$, $i = 1, \ldots , M_{np}$. Thus
\begin{align*}
\label{eq:rhi2}
&\int_{R^{-} }w^{\delta +1} 
\leq  C_0   \abs{R^{-}} w_{R^{-}}^{\delta +1} + C_1  \delta \sum_{i=1}^{M_{np}} \int_{R_i^{1}   } w^{\delta +1} \\
&\leq \sum_{j=0}^{N}\left( C_0^{j+1} (C_1 \delta )^{j } \sum_{i=1}^{M_{np}} \abs{R_{i}^{j-}} w_{R_{i}^{j-}}^{\delta +1} \right) + (C_1 \delta M_{np})^{N} \int_{\bigcup_{i=1}^{M_{np}} \widetilde{R}_{i}^{N} \setminus R_{i}^{N-} } w^{\delta + 1} \\
&= I + II.
\end{align*}

For the inner sum in the first term we have
\[
\sum_{i=1}^{M_{np}} \abs{R_{i}^{j-}} w_{R_{i}^{j-}}^{\delta +1} 
\leq \sum_{i=1}^{M_{np}} 2^{-j \delta n } l^{- \delta(n+p)} \left(\int_{R_{i}^{j-}} w \right)^{\delta +1} 
\leq  2^{-j \delta n} l^{n+p} M_{np}^{\delta + 1} w_{R}^{\delta + 1}.
\]
Thus  
\begin{align*}
I \leq \left( \dashint_{R }w \right)^{1+\delta}  C_0 M_{np}^{\delta + 1} l^{n+p} \sum_{j=0}^{N}(C_1C_0 \delta )^{j}  2^{-j \delta n} ,
\end{align*}
where the series converges as $N \to \infty$ if $\delta $ is small enough. On the other hand, if $w$ is bounded, it is clear that $II \to 0$ as $N \to \infty$. This proves the claim for bounded $w$, hence for truncations $\min \{w,m\}$, and the general case follows from the monotone convergence theorem as $m \to \infty$. The self improving property of $A_q^{+}(\gamma)$ follows from applying the reverse H\"older inequality coming from the $A_{q'}^{-}(\gamma)$ condition satisfied by $w^{1-q'}$ and using Proposition \ref{prop:properties}.
\end{proof}

\begin{remark}
\label{remark:rhiimproving}
An easy subdivision argument shows that the reverse H\"older inequality can be obtained for any pair $R, t + R$ where $t > 0$. Namely, we can divide $R$ to arbitrarily small, possibly overlapping, subrectangles. Then we may apply the estimate to them and sum up. This kind of procedure has been carried out explicitly in \cite{Berkovits2011}.
\end{remark}

Now we are ready to state the analogue of Muckenhoupt's theorem in its complete form. Once it is established, many results familiar from the classical Muckenhoupt theory follow immediately.

\begin{theorem}
\label{thm:strongchar}
Let $\gamma_{i} \in (0,1)$, $i = 1,2,3$. Then the following conditions are equivalent:
\begin{itemize}
\item[(i)] $w \in A_q^{+}(\gamma_1)$,
\item[(ii)] the operator $M^{\gamma_2+}$ is of weighted weak type $(q,q)$ with the weight $w$,
\item[(iii)] the operator $M^{\gamma_3+}$ is of weighted strong type $(q,q)$ with the weight $w$.
\end{itemize}
\end{theorem}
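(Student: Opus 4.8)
The plan is to prove the cycle of implications $(iii)\Rightarrow(ii)\Rightarrow(i)\Rightarrow(iii)$, since the first two are essentially free and the work is concentrated in the last one. The implication $(iii)\Rightarrow(ii)$ is immediate because strong type $(q,q)$ trivially implies weak type $(q,q)$ by Chebyshev. For $(ii)\Rightarrow(i)$: weak type $(q,q)$ for $M^{\gamma_2+}$ gives $w\in A_q^+(\gamma_2)$ by Lemma \ref{lemma:weaktypeap}, and then by property (vii) of Proposition \ref{prop:properties} we upgrade to $w\in A_q^+(\gamma_1)$ for the given $\gamma_1\in(0,1)$ (the value of $\gamma$ in $(0,1)$ does not matter). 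So the only substantive task is $(i)\Rightarrow(iii)$: from $w\in A_q^+(\gamma_1)$ deduce that $M^{\gamma_3+}$ is of strong type $(q,q)$ with weight $w$.

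For $(i)\Rightarrow(iii)$ I would run the standard self-improvement plus interpolation scheme, now adapted to the parabolic setting. First, by Proposition \ref{prop:properties}(vii) again, we may freely assume $w\in A_q^+(\gamma)$ for whatever small $\gamma$ is convenient, and in particular $\gamma<\gamma_3/2$ so there is room for the time lags. By Theorem \ref{thm:RHI}, $w$ satisfies a reverse Hölder inequality and, crucially, $w\in A_{q-\epsilon}^+(\gamma)$ for some $\epsilon>0$. Now invoke Lemma \ref{lemma:apweaktype} (the weak type lemma) applied to the exponent $q-\epsilon$: it gives that $M^{\gamma'+}$ is of weighted weak type $(q-\epsilon,q-\epsilon)$ with weight $w$, for $\gamma'$ slightly smaller than $\gamma_3$. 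Thus $M^{\gamma'+}:L^{q-\epsilon}(w)\to L^{q-\epsilon,\infty}(w)$ is bounded. Since also, by the same token applied to a slightly larger exponent (or by monotonicity and the trivial $L^\infty$ bound), $M^{\gamma'+}:L^{\infty}(w)\to L^\infty(w)$ is bounded, Marcinkiewicz interpolation between the weak $(q-\epsilon,q-\epsilon)$ endpoint and the $(\infty,\infty)$ endpoint yields strong type $(q,q)$ for $M^{\gamma'+}$ with weight $w$. Finally, since $\gamma'$ can be taken equal to (or smaller than, using $M^{\gamma_3+}f\lesssim M^{\gamma'+}f$ when $\gamma'\le\gamma_3$ up to the harmless rescaling of Proposition \ref{prop:properties}) $\gamma_3$, we conclude that $M^{\gamma_3+}$ is of strong type $(q,q)$.

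The main obstacle is making the interplay of the three different lag parameters $\gamma_1,\gamma_2,\gamma_3$ completely rigorous: one must repeatedly use Proposition \ref{prop:properties}(vii) and the comparison between $M^{\gamma+}$ for different $\gamma$ (via the rescaling of rectangles, as in the chain of inequalities at the start of the proof of Lemma \ref{lemma:apweaktype}) to ensure that the weak type input of Lemma \ref{lemma:apweaktype} at exponent $q-\epsilon$ and a slightly shrunk lag actually bounds the operator $M^{\gamma_3+}$ we want. The other point requiring a little care is the Marcinkiewicz interpolation step: the underlying measure is $w\,dx$, which need not be doubling, so one should phrase the interpolation purely in terms of the distribution function with respect to $w$ — this is fine since Marcinkiewicz interpolation does not use any regularity of the measure, only its additivity. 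Once these bookkeeping issues are handled, the rest is routine: the deep inputs (the covering lemmas, the reverse Hölder inequality, and the self-improvement $w\in A_{q-\epsilon}^+$) have already been established, so the theorem is essentially an assembly of Lemmas \ref{lemma:weaktypeap}, \ref{lemma:apweaktype}, Theorem \ref{thm:RHI}, and Proposition \ref{prop:properties}.
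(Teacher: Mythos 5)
Your proposal is correct and follows essentially the same route as the paper: the weak-type equivalence comes from Lemma \ref{lemma:weaktypeap}, Lemma \ref{lemma:apweaktype} and Proposition \ref{prop:properties}(vii) (packaged in the paper as Theorem \ref{thm:weakchar}), and the strong type follows from the self-improvement $w\in A_{q-\epsilon}^{+}$ of Theorem \ref{thm:RHI} together with Marcinkiewicz interpolation between the weak $(q-\epsilon,q-\epsilon)$ and trivial $(\infty,\infty)$ endpoints. The extra bookkeeping you flag about the lags $\gamma_i$ and the non-doubling measure $w\,dx$ is handled exactly as you describe, so no further comment is needed.
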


\begin{proof}
Equivalence of $A_q^{+}$ and weak type follows from Theorem \ref{thm:weakchar}. Theorem \ref{thm:RHI} gives $A_{q-\epsilon}^{+}$, so (iii) follows from Marcinkiewicz interpolation and the final implication (iii) $\Rightarrow$ (ii) is clear.
\end{proof}

\section{Factorization and $A_1^{+}$ weights}
In contrast with the classical case, it is not clear what is the correct definition of the parabolic Muckenhoupt class $A_1^{+}$. One option is to derive a $A_1^{+}$ condition from the weak type $(1,1)$ inequality for $M^{\gamma+}$, and get a condition that coincides with the formal limit of $A_q^{+}$ conditions.  We propose a slightly different approach and  consider the class arising from factorization of the parabolic Muckenhoupt weights and characterization of the parabolic $\BMO$.

\begin{definition}
Let $\gamma \in [0,1)$. A weight $w> 0$ is in $ A_1^{+}(\gamma)$ if for almost every $z \in \mathbb{R}^{n+1}$, we have
\begin{equation}
\label{eq:MA1condition1}
M^{\gamma-} w(z) \leq [w]_{ A_1^{+}(\gamma)}  w(z) .
\end{equation}
The class $ A_1^{-}(\gamma)$ is defined by reversing the direction of time.
\end{definition}

The following proposition shows that, in some cases, the $A_1^{+}$ condition implies the $A_1$ type condition equivalent to the weak $(1,1)$ inequality. Moreover, if $\gamma = 0$, then the two conditions are equivalent.

\begin{proposition}
\label{prop:MA1}
Let $w \in A_1^{+}(\gamma)$ with $\gamma < 2^{1-p}$.
\begin{enumerate}
\item[(i)] For every parabolic rectangle $R$ it holds that
\begin{equation}
\label{eq:MA1condition}
\dashint_{R^{-}(2^{p-1}\gamma) } w \lesssim_{\gamma, [w]_{A_1^{+}}} \inf_{z \in R^{+}(2^{p-1}\gamma)} w(z) .
\end{equation}
\item[(ii)] For all $q > 1$ we have that $w \in A_q^{+}$.
\end{enumerate}

\end{proposition}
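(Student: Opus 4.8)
The plan is to prove the two parts in order, deducing (ii) from (i) together with the characterizations already established.

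\textbf{Part (i).} Fix a parabolic rectangle $R = Q(x,l) \times (t - l^p, t + l^p)$ and write $R^{+} = R^{+}(2^{p-1}\gamma)$, $R^{-} = R^{-}(2^{p-1}\gamma)$. I want to show $\dashint_{R^{-}} w$ is controlled by $w(z)$ for \emph{every} $z \in R^{+}$. The key observation is that for a fixed target point $z = (y,s) \in R^{+}$, I can find a parabolic rectangle $R'$, centered at a suitable point, whose \emph{lower} half $R'^{-}(\gamma)$ covers $R^{-}$ (up to a controlled dilation) and whose \emph{upper} half $R'^{+}(\gamma)$ contains $z$. Concretely, if $R'$ has spatial cube containing both $Q$ and $y$ and side length comparable to $l$, and its temporal center is placed so that the time-lag gap of width $\gamma l'^p$ lands just below $s$, then the condition $\gamma < 2^{1-p}$ (equivalently $2^{p-1}\gamma < 1$) is exactly what guarantees that the lagged bottom half $R^{-}(2^{p-1}\gamma)$ of the original rectangle still fits inside the unlagged-by-$\gamma$ bottom half of $R'$, while $z$ sits in the top half. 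Then $A_1^{+}(\gamma)$ applied with the maximal function $M^{\gamma-}$ at the point $z$ gives
\[
\dashint_{R'^{-}(\gamma)} w \leq M^{\gamma-}w(z) \leq [w]_{A_1^{+}(\gamma)}\, w(z),
\]
and since $|R'| \lesssim_{\gamma} |R|$ and $R^{-} \subset R'^{-}(\gamma)$ (after the dilation/comparison, using Proposition \ref{prop:properties}(iii)--(iv) to move and rescale halves at bounded cost), we get $\dashint_{R^{-}} w \lesssim \dashint_{R'^{-}(\gamma)} w \lesssim w(z)$. Taking the infimum over $z \in R^{+}$ finishes (i). The bookkeeping of how large $l'$ must be and where to center $R'$ is the one genuinely fiddly point; the inequality $\gamma < 2^{1-p}$ must be used sharply there.

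\textbf{Part (ii).} Given (i), fix $q > 1$; I must produce some $\gamma' \in (0,1)$ with $w \in A_q^{+}(\gamma')$, and then Proposition \ref{prop:properties}(vii) upgrades this to all $\gamma'$, which is what the statement "$w \in A_q^{+}$" (with $\gamma$ suppressed) means. Set $\gamma' = 2^{p-1}\gamma \in (0,1)$. For a parabolic rectangle $R$, by (i) we have $w(z) \gtrsim \dashint_{R^{-}(\gamma')} w =: a$ for a.e. $z \in R^{+}(\gamma')$, hence $w^{1-q'}(z) \lesssim a^{1-q'}$ there (note $1 - q' < 0$). Therefore
\[
\left(\dashint_{R^{-}(\gamma')} w\right)\left(\dashint_{R^{+}(\gamma')} w^{1-q'}\right)^{q-1}
\lesssim a \cdot \left(a^{1-q'}\right)^{q-1} = a \cdot a^{-1} = 1,
\]
using $(1-q')(q-1) = -1$. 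The bound is uniform in $R$, so $w \in A_q^{+}(\gamma')$, and Proposition \ref{prop:properties}(vii) gives $w \in A_q^{+}(\gamma'')$ for all $\gamma'' \in (0,1)$.

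\textbf{Main obstacle.} The only real work is the geometric construction in part (i): choosing the auxiliary rectangle $R'$ and verifying that $R^{-}(2^{p-1}\gamma) \subset R'^{-}(\gamma)$ (up to a bounded dilation) while the chosen point $z$ lies in $R'^{+}(\gamma)$, with $|R'| \lesssim_\gamma |R|$. This is where the hypothesis $\gamma < 2^{1-p}$ enters essentially — the factor $2^{p-1}$ is precisely the amount by which enlarging the rectangle's scale (and thus the time-lag gap, which scales like $l^p$) can be absorbed. Once this inclusion is set up, everything else is routine averaging and the cited properties from Proposition \ref{prop:properties}.
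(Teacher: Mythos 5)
Your part (ii) is fine and matches the paper's intent: given the pointwise bound (i), raising to the power $1-q'<0$ and using $(1-q')(q-1)=-1$ gives $A_q^{+}(2^{p-1}\gamma)$, and Proposition \ref{prop:properties}(vii) upgrades to all lags. The gap is in part (i), at the step
\[
\dashint_{R'^{-}(\gamma)} w \leq M^{\gamma-}w(z).
\]
The operator $M^{\gamma-}$ is a \emph{centered} maximal function: by definition it is a supremum of $\dashint_{R^{-}(\gamma)}|f|$ over parabolic rectangles $R$ \emph{centered at} $z$. In your construction $z$ lies in the upper half $R'^{+}(\gamma)$, not at the center of $R'$, so $\dashint_{R'^{-}(\gamma)} w$ is simply not one of the averages competing in the supremum $M^{\gamma-}w(z)$, and the displayed inequality does not follow. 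Appealing to Proposition \ref{prop:properties}(iii)--(iv) to "move and rescale halves at bounded cost" does not repair this, and is in fact circular: those properties are stated for $w\in A_q^{+}(\gamma)$ with $q>1$, which is precisely what part (ii) is trying to establish.

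The paper avoids this by placing the auxiliary rectangle so that its \emph{center is $z$}. Concretely, with $R_0=Q(x_0,l_0)\times(t_0-l_0^p,t_0+l_0^p)$, $\delta=2^{p-1}\gamma$, and $z=(y,s)\in R_0^{+}(\delta)$, take $R(z,2l_0)$; then $\gamma(2l_0)^p=2\delta l_0^p$, and one checks directly that $R(z,2l_0)^{-}(\gamma)\supset R_0^{-}(\delta)$ with $\lvert R(z,2l_0)^{-}(\gamma)\rvert\lesssim_{\gamma}\lvert R_0^{-}(\delta)\rvert$, the inclusion using exactly $s\in(t_0+\delta l_0^p,\,t_0+l_0^p)$ and $\delta<1$ (i.e.\ $\gamma<2^{1-p}$). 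Since this rectangle is centered at $z$, the chain $\dashint_{R_0^{-}(\delta)}w\lesssim\dashint_{R(z,2l_0)^{-}(\gamma)}w\leq M^{\gamma-}w(z)\leq[w]_{A_1^{+}(\gamma)}w(z)$ is legitimate, and taking the essential infimum over $z\in R_0^{+}(\delta)$ yields (i). You should replace your "$z$ in the upper half of $R'$" picture with "$z$ is the center of $R'$"; the rest of your argument then goes through.
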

\begin{proof}
Denote $\delta = 2^{p-1} \gamma$. Take a parabolic rectangle $R_0$. We see that every $z \in R_0^{+}(\delta)$ is a center of a parabolic rectangle with $R^{-} (z,\gamma) \supset R_0^{-}(\delta)$ such that 
\[\dashint_{R^{-}(\delta)}w \lesssim \dashint_{R^{-}(z,\gamma)}w \leq M^{\gamma-}w(z) \lesssim w(z), \]
where the last inequality used  \eqref{eq:MA1condition1}. This proves (i). The second statement (ii) follows from the fact that  \eqref{eq:MA1condition} is an increasing limit of $A_q^{+}(\gamma)$ conditions, see 
Proposition \ref{def:MAq}.
\end{proof}

Now we will state the main result of this section, that is, the factorization theorem for the parabolic Muckenhoupt weights corresponding to the classical results, for example, in  \cite{Jones1980} and \cite{CJR1983}.

\begin{theorem}
\label{thm:factorization}
Let $\delta \in (0,1)$ and $\gamma \in (0,\delta 2^{1-p})$. A weight $w \in A_q^{+}(\delta) $ if and only if $w = u v^{1-p}$, where $u \in A_1^{+}(\gamma)$ and $v \in A_1^{-}(\gamma)$.
\end{theorem}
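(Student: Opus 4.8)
The easy direction is the ``if'' part: given $u \in A_1^+(\gamma)$ and $v \in A_1^-(\gamma)$ with $w = u v^{1-p}$, I would verify the $A_q^+(\delta)$ condition directly. By Proposition \ref{prop:MA1}, the hypothesis $\gamma < \delta 2^{1-p}$ lets me upgrade the pointwise $A_1^\pm(\gamma)$ bounds to the averaged comparison inequalities \eqref{eq:MA1condition} on rectangles, with the lag improved from $\gamma$ to $2^{p-1}\gamma < \delta$. Writing $\sigma = w^{1-q'}$, one has $\sigma = u^{1-q'} v^{(p-1)(q'-1)}$, and since $(1-p)(1-q') = (p-1)(q'-1)$ the exponents are arranged so that the numerator factor of $w$ on $R^-(\delta)$ and the numerator factor of $\sigma$ on $R^+(\delta)$ both get controlled by infima (hence by the same quantity) via \eqref{eq:MA1condition}, while the $u$ and $v$ pieces that remain combine to a bounded constant. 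This is a short computation once the lag bookkeeping is set up; monotonicity in $\gamma$ from Proposition \ref{prop:properties}(v),(vii) absorbs the discrepancy between $\gamma$, $2^{p-1}\gamma$ and $\delta$.

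The substantive direction is ``only if''. The plan is to mimic the Jones factorization argument via a fixed-point / iteration scheme adapted to the parabolic setting. Assume $w \in A_q^+(\delta)$. By Theorem \ref{thm:strongchar} the operators $M^{\gamma'+}$ and (by the reversed statement) $M^{\gamma'-}$ are bounded on the appropriate weighted $L^q$ and $L^{q'}$ spaces for any $\gamma' \in (0,1)$; fix a small auxiliary lag $\gamma$ as in the statement. Define an operator $T$ on nonnegative functions by
\[
T h = w^{-1/(pq')}\, M^{\gamma-}\!\big( w^{1/(pq')} h^{1/q'} \big)^{?} + \cdots
\]
— more precisely the Rubio de Francia style operator $Th = w^{1/p}\,(M^{\gamma+}(h w^{-1/p}))$ paired with its dual built from $M^{\gamma-}$, chosen so that a fixed point yields weights with the right $A_1^\pm$ bounds and the right product. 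I would run the Rubio de Francia algorithm: start from a suitable $h_0$, set $h = \sum_k 2^{-k} T^k h_0 / \|T\|^k$ so that $M^{\gamma\pm}(\text{pieces of } h) \lesssim h$ pointwise, i.e. the pieces lie in $A_1^\pm(\gamma)$, and check that the resulting product reconstructs $w$ (or a weight comparable to $w$, which by Proposition \ref{prop:truncation}-type flexibility and the geometric robustness of the classes suffices). The exponent $1-p$ rather than the classical $1-q$ reflects the doubly-nonlinear normalization and must be tracked through the algebra of exponents.

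The main obstacle I anticipate is the \emph{time lag mismatch} intrinsic to the parabolic geometry: the maximal operators that are bounded carry a lag $\gamma$, the target classes $A_1^\pm(\gamma)$ are defined with that same lag, but the $A_q^+$ membership we start from and want to recover is at lag $\delta$, and composing forward-in-time and backward-in-time maximal operators in the iteration shifts the relevant rectangles in time. Keeping all the ``$+$''-lagged and ``$-$''-lagged pieces compatible — so that the product $uv^{1-p}$ genuinely controls $w$ on the rectangles appearing in the $A_q^+(\delta)$ supremum — is where the constraint $\gamma \in (0,\delta 2^{1-p})$ is forced, exactly as in Proposition \ref{prop:MA1}. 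I would handle this by doing all estimates at the worst lag and invoking Proposition \ref{prop:properties}(iv),(v),(vii) repeatedly to slide rectangles and re-lag; the self-improvement from Theorem \ref{thm:RHI} (openness of the $A_q^+$ condition) gives the room needed for the geometric series in the Rubio de Francia step to converge.
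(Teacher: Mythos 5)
Your plan matches the paper's proof in both directions: sufficiency of the factored form is obtained from Proposition \ref{prop:MA1} plus a direct computation with the $A_q^{+}(\delta)$ quotient exactly as you describe, and necessity is proved by the Rubio de Francia algorithm applied to an operator $T$ built from $M^{\gamma+}$ and $M^{\gamma-}$ conjugated by powers of $w$, whose $L^{q}\to L^{q}$ boundedness comes from Theorem \ref{thm:strongchar}. The one detail to pin down when writing out the iteration is that the operator is $Tf = \bigl(w^{-1/q}M^{\gamma-}(f^{q-1}w^{1/q})\bigr)^{1/(q-1)} + w^{1/q}M^{\gamma+}(fw^{-1/q})$ (powers $w^{1/q}$, not $w^{1/p}$), and the exponent $1/(q-1)$ makes $T$ sublinear only for $q \geq 2$; the paper runs the algorithm in that range and then settles $1<q<2$ via the duality $w \in A_q^{+} \Leftrightarrow w^{1-q'} \in A_{q'}^{-}$ of Proposition \ref{prop:properties}(ii).
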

\begin{proof}
Let $u \in A_1^{+}(\gamma)$, $v \in A_1^{-}(\gamma)$ and fix a parabolic rectangle $R$. By Proposition \ref{prop:MA1}, for all $x \in R^{+}(\delta)$, we have 
\begin{align*}
u(x)^{-1} \leq \sup_{x \in R^{+}(\delta)} u(x)^{-1} = \left( \inf_{x \in R^{+}(\delta)} u(x) \right)^{-1} \lesssim  \left( \dashint_{R^{-}(\delta)} u \right)^{-1},
\end{align*}
and, for all $y \in R^{-}(\delta)$, we have the corresponding inequality for $v$, that is,
\begin{align*}
v(y)^{-1} \leq \sup_{y \in R^{-}(\delta)} v(y)^{-1} = \left( \inf_{y \in R^{-}(\delta)} v(y) \right)^{-1} \lesssim  \left( \dashint_{R^{+}(\delta)} v \right)^{-1}.
\end{align*}
Hence
\begin{align*}
&\left(\dashint_{R^{-}(\delta)} uv^{1-q} \right) \left(\dashint_{R^{+}(\delta)} u^{1-q'} v \right)^{p-1} \\
&    \lesssim    \left(\dashint_{R^{-}(\delta)} u  \right)\left( \dashint_{R^{+}(\delta)} v \right)^{1-q}  \left( \dashint_{R^{+}(\delta)}   v \right)^{q-1}\left( \dashint_{R^{-}(\delta)} u \right)^{-1}  = C ,
\end{align*}
which proves that $uv^{1-q} \in A_q^{+}(\delta)$. The finite constant $C$ depends only on $\gamma, \delta, [u]_{A_1^{+}(\gamma)}$ and $[v]_{A_1^{-}(\gamma)}$. 

For the other direction, fix $q \geq 2$ and $w \in A_q^{+}$. Define an operator $T$ as
\[Tf = (w^{-1/q} M^{\gamma -} (f^{q-1} w^{1/q}))^{1/(q-1)}  +   w^{1/q} M^{\gamma +} (f  w^{-1/q}).  \]
By boundedness of  the operators 
\[
M^{\gamma+} : L^{q}(w) \to L^{q}(w)
\quad\text{and}\quad M^{\gamma -} : L^{q'}(w^{1-p'}) \to L^{q'}(w^{1-p'})
\]
we conclude that $T : L^{q} \to L^{q} $ is bounded. Let 
\[B(w) := \norm{T}_{L^{q} \to L^{q}} \eqsim_{[w]_{A_q^{+}}} 1. \]

Take $f_0 \in L^{q}$ with $\norm{f_0}_{L^{q}} = 1$. Let
\[\phi = \sum_{i=1}^{\infty} (2B(w))^{-i} T^{i}f_0 \]
where $T^{i}$ simply means the $i$th iterate of $T$. We define 
\[u = w^{1/q} \phi^{q-1} \quad \textrm{and} \quad v = w^{-1/q}\phi. \]
Clearly $w = uv^{1-q}$. We claim that $u \in  A_1^{+}$ and $v \in  A_1^{-}$. Since $q \geq 2$ the operator $T$ is sublinear, and we obtain
\begin{align*}
T(\phi) &\leq 2B(w) \sum_{i=1}^{\infty} (2B(w))^{-(i+1)} T^{i+1}(f_0) \\
& = 2B(w) \left( \phi - \frac{T(f_0)}{2B(w)} \right) 
 \leq 2B(w) \phi.  
\end{align*}
Noting that $\phi = (w^{-1/q}u)^{1/(q-1)} = w^{1/q} v$ and inserting the above inequality into the definition of $T$, we obtain
\begin{align*}
M^{\gamma-} u \leq (2B(w))^{q-1} u \quad \textrm{and} \quad M^{\gamma +} v \leq 2B(w) v.
\end{align*}
This implies that $u \in A_1^{+}$ and $v \in A_1^{-}$ so the proof is complete for $q \geq 2$. Once the claim is known for $q \geq 2$, the complementary case $1 < q < 2$ follows from Proposition \ref{prop:properties} (ii).
\end{proof}

Next we will characterize $A_1^{+}$ weights as small powers of maximal functions up to a multiplication by bounded functions. The following result looks very much like the classical characterization of Muckenhoupt $A_1$ weights. However, we emphasize that even if the maximal operator $M^{\gamma+}$ is dominated by the Hardy-Littlewood maximal operator, the assumptions of the following lemma are not restrictive at all when it comes to the measure $\mu$. Indeed, the condition $M^{\gamma-} \mu < \infty$ almost everywhere still includes rather rough measures. For instance, their growth towards the positive time direction can be almost arbitrary, and the same property is carried over to the $A_1^{+}$ weights. 

\begin{lemma} 
\label{lemma:charofMA1}
\begin{enumerate} 
\item[(i)] Let $\mu $ be a locally finite nonnegative Borel measure on $\mathbb{R}^{n+1}$ such that $M^{-} \mu < \infty$ almost everywhere. If $\delta \in [0,1)$, then 
\[w:=(M^{-}\mu)^{\delta} \in A_1^{+}(0)\] 
with $[w]_{ A_1^{+}(0)}$ independent of $\mu$. 
\item[(ii)] Let $w \in A_1^{+}(\gamma')$. Then there exists a $\mu$ as above, $\delta \in [0,1)$ and $K$ with $K,K^{-1} \in L^{\infty}$ such that 
\[w = K (M^{\gamma -}\mu)^{\delta},\]
where $\gamma' < \gamma$.
\end{enumerate}
\end{lemma}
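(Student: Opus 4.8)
The plan is to carry over the two halves of the classical Coifman--Rochberg argument to the forward-in-time parabolic setting; the only genuinely new ingredient is a careful accounting of temporal positions, which is exactly what forces the extra room $\gamma'<\gamma$ in part (ii).

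For (i) I would fix a point $z_0=(x_0,t_0)$ and a parabolic rectangle $R=R(x_0,t_0,l)$, and prove $\dashint_{R^-(0)}(M^-\mu)^\delta\lesssim(M^-\mu(z_0))^\delta$ with a constant depending only on $n,p,\delta$; taking the supremum over all such $R$ then gives $M^-w\le[w]w$ a.e.\ with $[w]=[w]_{A_1^+(0)}$ independent of $\mu$. Split $\mu=\mu_1+\mu_2$, with $\mu_1=\mu|_{\widetilde R^-}$ for a fixed dilate $\widetilde R=R(x_0,t_0,Cl)$, $C=C(n,p)$, and $\mu_2=\mu-\mu_1$. The geometric point is that $M^-\mu(z)\le M^-\mu_1(z)+C'M^-\mu(z_0)$ for every $z\in R^-(0)$: a parabolic rectangle $P$ centred at $z$ of side $\le 2l$ has $P^-(0)\subset\widetilde R^-$ and so sees only $\mu_1$, while one of side $>2l$ has $P^-(0)$ contained in $\widehat P^-(0)$ for a comparable rectangle $\widehat P$ centred at $z_0$ (here $p\ge1$ is used to absorb the temporal displacement), whence its average of $\mu$ is $\lesssim M^-\mu(z_0)$. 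Since $\delta<1$, $(a+b)^\delta\le a^\delta+b^\delta$, so it remains to estimate $\dashint_{R^-(0)}(M^-\mu_1)^\delta$. For this I use that $M^-$ is of weak type $(1,1)$ against Lebesgue measure --- because $M^-f\lesssim M_{\mathrm{HL}}f$ for a non-centred Hardy--Littlewood maximal operator $M_{\mathrm{HL}}$ over parabolic rectangles, and Lebesgue measure is doubling for the parabolic metric --- and then Kolmogorov's inequality gives $\dashint_{R^-(0)}(M^-\mu_1)^\delta\lesssim_\delta(|R^-(0)|^{-1}\|\mu_1\|)^\delta\lesssim(\dashint_{\widetilde R^-}\mu)^\delta\le(M^-\mu(z_0))^\delta$, the last step because $\widetilde R^-$ is itself the lower half of a rectangle centred at $z_0$.

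For (ii) I would first observe that $w\in A_1^+(\gamma')$ (with $\gamma'$ in the appropriate small range) forces $w\in A_2^+(\gamma'')$ for a suitable $\gamma''>\gamma'$: for $R=R(x,t,l)$ and any $z\in R^+(\gamma'')$, the set $R^-(\gamma'')$ is contained in $P^-(\gamma')$ for a fixed dilate $P=R(z,Cl)$, so $\dashint_{R^-(\gamma'')}w\lesssim M^{\gamma'-}w(z)\le[w]_{A_1^+(\gamma')}w(z)$; taking the essential infimum over $z\in R^+(\gamma'')$ and using $\essinf_{R^+(\gamma'')}w\,(\dashint_{R^+(\gamma'')}w^{-1})\lesssim1$ gives the $A_2^+(\gamma'')$ condition. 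Hence Theorem~\ref{thm:RHI} produces $\delta_0>0$ and $C$, uniform in $R$, with $(\dashint_{R^-(0)}w^{1+\delta_0})^{1/(1+\delta_0)}\le C\dashint_{R^+(0)}w$. Set $\mu:=w^{1+\delta_0}\dx$ (locally finite, by reverse H\"older) and $\delta:=1/(1+\delta_0)\in(0,1)$. Since the sets $P^-(\gamma)$ centred at a point $z$ shrink regularly to $z$ as the side tends to $0$, Lebesgue differentiation gives $M^{\gamma-}\mu\ge w^{1+\delta_0}$ a.e. For the opposite bound, fix $z=(x,t)$ and $P=R(x,t,m)$, and decompose $P^-(\gamma)$ into $N=N(n,p,\gamma,\gamma')$ boundedly overlapping lower halves $S_\alpha^-(0)$ of parabolic rectangles of common side $s:=(\gamma-\gamma')^{1/p}m$; apply the reverse H\"older inequality to each, $\dashint_{S_\alpha^-(0)}w^{1+\delta_0}\lesssim(\dashint_{S_\alpha^+(0)}w)^{1+\delta_0}$, where $S_\alpha^+(0)=S_\alpha^-(0)+s^p$ is shifted forward in time by exactly one box length. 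The choice $s^p=(\gamma-\gamma')m^p$ is precisely what makes every $S_\alpha^+(0)$ sit inside $P^-(\gamma')$, so $\dashint_{S_\alpha^+(0)}w\lesssim M^{\gamma'-}w(z)\le[w]_{A_1^+(\gamma')}w(z)$. Summing over $\alpha$ and dividing by $|P^-(\gamma)|\eqsim Ns^{n+p}$ yields $\dashint_{P^-(\gamma)}w^{1+\delta_0}\lesssim w^{1+\delta_0}(z)$, hence $M^{\gamma-}\mu\lesssim w^{1+\delta_0}$ a.e. Combining the two inequalities, $K:=w\,(M^{\gamma-}\mu)^{-\delta}$ satisfies $K,K^{-1}\in L^\infty$ and $w=K(M^{\gamma-}\mu)^\delta$, while $M^{\gamma-}\mu\eqsim w^{1+\delta_0}<\infty$ a.e., so $\mu$ is admissible.

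I expect the main obstacle to be the geometric step in (ii) that aligns the forward-shifted halves $S_\alpha^+(0)$ inside a single $\gamma'$-lagged lower half centred at $z$: the reverse H\"older inequality can only transport mass forward in time, whereas the $A_1^+(\gamma')$ condition only controls past averages that are separated from $z$ by a temporal gap, so the slack $\gamma-\gamma'$ has to be spent to make these two mechanisms meet, and this is where the hypothesis $\gamma'<\gamma$ is genuinely needed. The remaining pieces --- Kolmogorov's inequality, the weak type $(1,1)$ bound for $M^-$, the reverse H\"older bookkeeping, and keeping the constant in (i) uniform in $\mu$ --- are routine.
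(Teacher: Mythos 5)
Your proposal is correct and follows essentially the same route as the paper's proof: part (i) is the identical near/far splitting of $\mu$ relative to a dilate of $R^{-}$ combined with Kolmogorov's inequality for the weak type $(1,1)$ bound, and part (ii) is the same combination of the reverse H\"older inequality with Lebesgue differentiation along the regularly shrinking family $P^{-}(\gamma)$. The only difference is that you spell out two steps the paper delegates elsewhere, namely the subdivision argument transporting the reverse H\"older inequality across the time lag (Remark \ref{remark:rhiimproving}) and the implication from $A_1^{+}(\gamma')$ to a higher $A_q^{+}$ class (Proposition \ref{prop:MA1}).
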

\begin{proof}
Let $x \in \mathbb{R}^{n+1}$ and fix a parabolic rectangle $R_0$ centered at $x$. Denote $\widetilde{B} = (2R_0)^{-}$. Decompose $\mu = \mu_1 + \mu_2$ where $\mu_1 = \mu \vert_{\widetilde{B} } $ and $\mu_2 = \mu \vert_{\widetilde{B}^{c}}$. Kolmogorov's inequality gives
\begin{align*}
\dashint_{R_0^{-}} (M^{-}\mu_{1} )^\delta &\leq C \abs{R_0^{-}}^{-\delta}\mu_{1}(\widetilde{B})^{\delta}   \leq C \left( \frac{\mu(\widetilde{B})}{ \lvert \widetilde{B} \rvert } \right)^{\delta}  \leq C M^{ -}\mu(x)^{\delta} .
\end{align*}

On the other hand, for any $y \in R_0^{-}$ and a rectangle $R(y,L) \cap (\widetilde{B})^{c} \neq \varnothing$,
we have $L \gtrsim l(R_0)$. Moreover, $R(y,L) \subset R(x,C L)$ so that
\[M^{-}\mu_2(y)  \lesssim M^{-}\mu(x) \]
and
\[
\dashint_{R_0^{-}} (M^{-}\mu)^{\delta} \leq  \dashint_{R_0^{-}}  (M^{ -}\mu_2)^{\delta} + \dashint_{R_0^{-}} (M^{-}\mu_{1}) ^\delta 
\lesssim M^{-}\mu(x)^{\delta}.
\]

To prove (ii), take $w \in A_1^{+}(\gamma')$ and a parabolic rectangle $R$ centered at $x$. By the reverse H\"older property (Theorem \ref{thm:RHI}), Remark \ref{remark:rhiimproving}, and inequality \eqref{eq:MA1condition1} we have
\begin{align*}
\left( \dashint_{R^{-}(\gamma)} w^{1+\epsilon} \right)^{1/(1+\epsilon)} \lesssim   w(x).
\end{align*}
Denote $\mu = w ^{1+\epsilon}$ and $\delta = 1/(1+\epsilon)$. By the Lebesgue differentiation theorem 
\[w(x) \leq M^{\gamma-}\mu(x)^{\delta} \lesssim w(x). \]
Hence 
\[K = \frac{w}{(M^{\gamma-}\mu)^{\delta}} \]
is bounded from above and from below, which proves the claim.
\end{proof}

\section{A characterization of the parabolic $\BMO$}
In this section we discuss the connection between parabolic Muckenhoupt weights and the parabolic $\BMO$. 
The parabolic $\BMO$ was explicitly defined by Fabes and Garofalo in \cite{FG1985}, who gave a simplified proof of the parabolic John-Nirenberg lemma in Moser's paper  \cite{Moser1964}. We consider a  slightly modified definition in order to make the parabolic $\BMO$ a larger space and a more robust class, see \cite{Saari2014}. Our definition has essentially the same connections to PDE as the one in \cite{FG1985}. Moreover, this extends the theory beyond the quadratic growth case and applies to the doubly nonlinear parabolic equations.

\begin{definition}
\label{def:parabmo}
A function $u \in L_{loc}^{1}(\mathbb{R}^{n+1})$ belongs to $\PBMO^{+}$, if there are constants $a_R$, that may depend on the parabolic rectangles $R$, such that
\begin{equation}
\label{eq:bmocondition}
\sup_{ R  } \left( \dashint_{  R ^{+}(\gamma)} (u-a_R)^{+}   +  \dashint_{ R ^{-}(\gamma)} (a_R- u)^{+}   \right) < \infty.
\end{equation}
for some $\gamma \in (0,1)$.
If \eqref{eq:bmocondition} holds with the time axis reversed, then $u \in \PBMO^{-}$.
\end{definition}

If  \eqref{eq:bmocondition} holds for some $\gamma \in (0,1)$, then it holds for all of them. Moreover, we can consider prolonged parabolic rectangles $Q \times (t-T l^{p},t+T l^{p})$ with $T > 0$ and still recover the same class of functions. These facts follow from the main result in \cite{Saari2014}, and they can be deduced from  results in \cite{Aimar1988} and in a special case from results in \cite{FG1985}. 

The fact that $\gamma >0$ is crucial. For example, the John-Nirenberg inequality (Lemma \ref{lemma:JN}) for the parabolic $\BMO$ cannot hold without a time lag. Hence a space with $\gamma = 0$ cannot be characterized through the John-Nirenberg inequality. The following lemma can be found in \cite{Saari2014}. See also \cite{FG1985} and \cite{Aimar1988}.

\begin{lemma}
\label{lemma:JN}
Let $u \in \PBMO^{+}$ and $\gamma \in (0,1)$. Then there are $A,B > 0$ depending only on $n,\gamma$ and $u$ such that
\begin{equation}
\label{JN+}
 \abs{  R ^{+}(\gamma) \cap \{ (u-a_R)^{+} > \lambda\}} \leq A e^{-B\lambda} \abs{R } 
\end{equation}
 and
\begin{equation}
 \label{JN-}
 \abs{ R ^{-}(\gamma) \cap \{ (a_R-u)^{+} > \lambda\}} \leq A e^{-B\lambda} \abs{R } .
\end{equation}
\end{lemma}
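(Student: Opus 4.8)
\textbf{Proof plan for Lemma \ref{lemma:JN} (parabolic John--Nirenberg inequality).}

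The plan is to prove \eqref{JN+}; the estimate \eqref{JN-} follows by applying \eqref{JN+} to $-u \in \PBMO^{-}$ after reversing time. Fix a parabolic rectangle $R$ and, after translating and rescaling, assume $R^{+}(\gamma) = Q \times (\gamma l^{p}, l^{p})$ with $Q = Q(0,l)$ and $a_R = 0$, so that $\dashint_{R^{+}(\gamma)} u^{+} \leq \|u\|_*$ where $\|u\|_*$ denotes the $\PBMO^{+}$ seminorm. First I would fix an auxiliary lag $\gamma' \in (0,\gamma)$ and set up a Calder\'on--Zygmund stopping-time decomposition adapted to the parabolic geometry: starting from $R$, subdivide $Q$ dyadically and, for each spatially dyadic subcube $Q'$ with associated time interval, stop the first time the average of $u^{+}$ over the corresponding lower part $\check{R'}$ (in the notation of \eqref{def:mato}) exceeds a large threshold $\rho$. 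This is exactly the slicing-plus-dyadic construction already used in the proof of Lemma \ref{lemma:quasistein}: it produces a countable family $\mathcal F$ of parabolic halves $P^{-}$ with controlled overlap at each scale, with $\rho < \dashint_{P^{-}} u^{+} \lesssim \rho$ on each selected piece, and with $u^{+} \leq \rho$ a.e. off $\bigcup P^{-}$.

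The key mechanism is a self-improvement recursion for the distribution function
\[
\psi(\lambda) := \sup_{R} \frac{1}{\abs{R}} \abs{R^{+}(\gamma) \cap \{(u - a_R)^{+} > \lambda\}},
\]
where the supremum is over all parabolic rectangles. On each stopping piece $P$, the $\PBMO^{+}$ condition forces $\dashint_{P^{+}(\gamma)} (u - a_P)^{+} \leq \|u\|_*$ for a suitable constant $a_P$ comparable to $\rho$ (here I use that $a_P$ can be read off from the average on $\check P$, together with Proposition \ref{prop:properties}-type flexibility in the lag, to pass between $P^{-}$, $\check P$ and $P^{+}(\gamma)$). Combining the bounded-overlap estimate $\sum_{P} \abs{P} \lesssim \lambda^{-1}\,\|u\|_*\,\abs{R}$ from the stopping-time construction (this is where Lemma \ref{lemma:cover1} enters, precisely as in Lemma \ref{lemma:quasistein}) with the inductive bound applied on each $P^{+}(\gamma)$, one gets an inequality of the form $\psi(\lambda + c\rho) \leq \tfrac{C\|u\|_*}{\rho}\,\psi(\lambda)$ for all $\lambda \geq 0$, with $C$ depending only on $n$ and the overlap constants. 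Choosing $\rho = 2C\|u\|_*$ makes the multiplicative factor $\leq 1/2$, and iterating $k$ times gives $\psi(k c\rho) \leq 2^{-k}$, which yields the exponential decay $\psi(\lambda) \leq A e^{-B\lambda}$ with $B = B(n,\gamma,\|u\|_*)$, hence \eqref{JN+}.

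The main obstacle is the bookkeeping forced by the time lag: unlike the classical John--Nirenberg argument, the ``good'' set where $u$ is small and the ``bad'' rectangles where the recursion is fed live in temporally shifted copies of the rectangle ($P^{-}$, $\check P$, $P^{+}(\gamma)$ rather than a single $P$), so one must repeatedly trade one lag for another and absorb the geometric errors — exactly the role played by Proposition \ref{prop:properties} (v)--(vii) and by the flexibility built into Definition \ref{def:MAq}. A second, more technical point is ensuring the overlap of the selected family $\mathcal F$ stays bounded independently of $R$ and the scale; this is handled by the separation-of-scales argument and Lemma \ref{lemma:cover1}, just as in Lemma \ref{lemma:quasistein}, but it must be checked that the presence of $a_P$ (rather than a fixed constant) does not spoil the telescoping. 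Everything else — the passage from $\psi$ to the stated form of \eqref{JN+}, and the deduction of \eqref{JN-} — is routine. Since this lemma is quoted from \cite{Saari2014}, the cleanest exposition is to cite that reference and indicate the above outline; I would include the recursion explicitly only if a self-contained proof is desired.
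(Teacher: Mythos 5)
The paper does not prove this lemma at all: it is imported verbatim from \cite{Saari2014} (see also \cite{FG1985} and \cite{Aimar1988}), so there is no internal proof to compare against, and your closing suggestion to cite the reference is in fact exactly what the authors do. Your outline is the right general shape — a Calder\'on--Zygmund stopping-time decomposition adapted to the parabolic slicing, a distribution-function recursion $\psi(\lambda+c\rho)\le \tfrac{C\|u\|_*}{\rho}\psi(\lambda)$, and iteration — and this is indeed the skeleton of the arguments in the cited references. The reduction of \eqref{JN-} to \eqref{JN+} via $-u\in\PBMO^{-}$ (Proposition \ref{prop:parabmo}(ii)) is also correct. Minor slip: the measure bound on the stopping family should read $\sum_P|P|\lesssim \rho^{-1}\|u\|_*|R|$ (threshold $\rho$, not the level $\lambda$).

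As a proof, however, the plan stops short precisely at the two points where the one-sided geometry makes this lemma genuinely harder than the classical John--Nirenberg inequality, and you name them as ``obstacles'' without resolving them. First, the comparison $a_P\le a_R+c\rho$ on each stopping piece cannot be read off from a single average as in the classical case, because the $\PBMO^{+}$ condition only controls $(u-a_P)^{+}$ on $P^{+}$ and $(a_P-u)^{+}$ on $P^{-}$, which live in disjoint time slices; relating $a_P$ to the stopping average requires the chaining argument of \cite{Saari2014} (or the abstract machinery of \cite{Aimar1988}), not merely the lag-adjustment of Proposition \ref{prop:properties}. Second, each generation of stopping rectangles is shifted forward in time relative to its parent, and over infinitely many generations this drift must be summable so that the iterated pieces remain inside a fixed dilate of $R^{+}(\gamma)$; this is where $p>1$ (or a prolonged rectangle) enters, as in the proof of Proposition \ref{prop:properties}(vii), and it is not automatic. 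Since the lemma is quoted rather than proved in the paper, leaving these to the reference is acceptable, but a self-contained proof would have to supply both.
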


There are also more elementary properties that can be seen from the Definition \ref{def:parabmo}. Since we will need them later, they will be stated in the next proposition.

\begin{proposition}
\label{prop:parabmo}

\begin{enumerate}
\item[(i)] If $u,v \in \PBMO^{+}$ and $\alpha , \beta \in (0,\infty)$, then $\alpha u+ \beta v \in \PBMO^{+}$.
\item[(ii)] $u \in \PBMO^{+}$ if and only if $-u \in \PBMO^{-}$.
\end{enumerate}
\end{proposition}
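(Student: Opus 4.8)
\textbf{Proof plan for Proposition \ref{prop:parabmo}.}
The statement collects two elementary facts about $\PBMO^{+}$, and the plan is to verify each directly from Definition \ref{def:parabmo}, using the stability of the class under the choice of $\gamma$ noted just before Lemma \ref{lemma:JN}.

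First I would prove (i). Suppose $u, v \in \PBMO^{+}$ and pick a common $\gamma \in (0,1)$ for which both satisfy \eqref{eq:bmocondition}; this is possible because the class is independent of $\gamma \in (0,1)$. Let $a_R$ and $b_R$ be the constants associated to $u$ and $v$ respectively, and set $c_R = \alpha a_R + \beta b_R$. The key observation is the subadditivity and positive homogeneity of the positive part: for $\alpha, \beta > 0$,
\[
(\alpha u + \beta v - c_R)^{+} = \big(\alpha(u - a_R) + \beta(v - b_R)\big)^{+} \le \alpha (u-a_R)^{+} + \beta (v-b_R)^{+},
\]
and likewise $(c_R - (\alpha u + \beta v))^{+} \le \alpha (a_R - u)^{+} + \beta (b_R - v)^{+}$. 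Integrating over $R^{+}(\gamma)$ and $R^{-}(\gamma)$, adding, and taking the supremum over all parabolic rectangles $R$, the right-hand side is bounded by $\alpha$ times the $\PBMO^{+}$ seminorm of $u$ plus $\beta$ times that of $v$, both finite. Hence $\alpha u + \beta v \in \PBMO^{+}$ with the constants $c_R$.

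For (ii), observe that with the choice of constant $-a_R$ for $-u$, one has $(-u - (-a_R))^{+} = (a_R - u)^{+}$ and $((-a_R) - (-u))^{+} = (u - a_R)^{+}$. Thus the two terms in \eqref{eq:bmocondition} for $-u$ on $R^{+}(\gamma)$ and $R^{-}(\gamma)$ are exactly the terms for $u$ on $R^{-}(\gamma)$ and $R^{+}(\gamma)$ with the roles of the two halves interchanged, which is precisely the defining condition for $\PBMO^{-}$; the converse is identical. Neither part presents any real obstacle — the only point requiring care is to fix a single $\gamma$ that works for both functions in (i), which is exactly what the $\gamma$-independence of $\PBMO^{+}$ provides.
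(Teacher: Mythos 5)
Your proof is correct and follows essentially the same route as the paper: part (i) uses subadditivity and positive homogeneity of $(\cdot)^{+}$ with the natural choice of constant $c_R = \alpha a_R^u + \beta a_R^v$, and part (ii) uses the elementary identity exchanging positive and negative parts under negation. You are in fact a bit more careful than the paper in two minor respects: you explicitly record the need to fix one common $\gamma$, and you write the correct constant $c_R = \alpha a_R + \beta b_R$ (the paper's displayed constant $a_R^u/\alpha + a_R^v/\beta$ is a typo).
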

\begin{proof}
For (i), note that 
\[(u+v - (a_{R}^{u} + a_{R}^{v}))^{+} \leq (u-a_R^{u})^{+} +(u-a_R^{v})^{+},\]
and an analogous estimate holds for the negative part. Hence $\alpha u + \beta v \in \PBMO^{+}  $ with 
\[a_R = \frac{a_{R}^{u} }{\alpha} + \frac{ a_{R}^{v}}{\beta}.\]

Since
\[(u-a_R)^{+} = ((-u) - (-a_R))^{-} \quad \textrm{and } \quad (u-a_R)^{-} = ((-u) - (-a_R))^{+} ,\]
the second assertion is clear.
\end{proof}

The goal of this section is to characterize the parabolic $\BMO$ in the sense of Coifman and Rochberg \cite{CR1980}. The Muckenhoupt theory developed so far gives a characterization for the parabolic Muckenhoupt weights, so what remains to do is to prove the equivalence of the parabolic $\BMO$ and the $A_q^{+}$ condition.

\begin{lemma}
\label{lemma:charbmo}
Let $q  \in (1,\infty)$ and $\gamma \in (0,1)$. Then
\begin{equation}
\label{eq:charofbmo}
\PBMO^{+} = \{-\lambda \log w : w \in A_q^{+}(\gamma), \lambda \in (0,\infty)  \}.
\end{equation}
\end{lemma}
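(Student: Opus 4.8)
The plan is to establish the two inclusions in \eqref{eq:charofbmo} separately, using the $\BMO$-to-$A_q^+$ direction as the harder one. For the inclusion ``$\supseteq$'', suppose $w \in A_q^{+}(\gamma)$ and set $u = -\lambda \log w$. First I would normalize $\lambda$: by Theorem~\ref{thm:RHI} we may assume $w \in A_q^+(\gamma')$ for some $\gamma' < \gamma$ and with a reverse H\"older exponent, and, more importantly, the self-improvement lets us reduce to showing $\log w \in \PBMO^{+}$ (the constant $\lambda$ just rescales). For a parabolic rectangle $R$, choose $a_R = \log\big(\dashint_{R^{+}(\gamma)} w^{1-q'}\big)^{-1/(q'-1)}$, i.e. essentially the logarithm of the ``harmonic-type'' average on the plus part. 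On $R^{-}(\gamma)$ one estimates $\dashint_{R^{-}(\gamma)} (\log w - a_R)^{+}$ by Jensen's inequality applied to the concave function $\log$, which converts the average of $(\log w - a_R)^{+}$ into $\log$ of an average of $w e^{-a_R}$, controlled by $[w]_{A_q^+(\gamma)}$; on $R^{+}(\gamma)$ one treats $\dashint_{R^{+}(\gamma)} (a_R - \log w)^{+} = \frac{1}{q'-1}\dashint_{R^{+}(\gamma)} \big(\log(w^{1-q'}) - \log(\dashint_{R^{+}(\gamma)} w^{1-q'})\big)^{+}$ by the same Jensen argument applied to $\sigma = w^{1-q'}$. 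Both bounds are uniform in $R$, giving $\log w \in \PBMO^{+}$.

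For the reverse inclusion ``$\subseteq$'', take $u \in \PBMO^{+}$ with constants $a_R$ and lag $\gamma$; I want to find $\lambda > 0$ with $e^{-u/\lambda} \in A_q^{+}(\gamma)$. The key input is the parabolic John--Nirenberg inequality, Lemma~\ref{lemma:JN}, which gives $A, B > 0$ with the exponential decay estimates \eqref{JN+} and \eqref{JN-}. Fixing a rectangle $R$ and writing $w = e^{-u/\lambda}$, one computes
\[
\dashint_{R^{-}(\gamma)} w \cdot e^{a_R/\lambda} = \dashint_{R^{-}(\gamma)} e^{(a_R - u)/\lambda} \leq \dashint_{R^{-}(\gamma)} e^{(a_R - u)^{+}/\lambda},
\]
and the layer-cake formula together with \eqref{JN-} bounds this by $1 + \frac{A}{\lambda}\int_0^\infty e^{t/\lambda} e^{-Bt}\,dt$, which is finite and bounded uniformly in $R$ provided $\lambda > 1/B$. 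Symmetrically, using \eqref{JN+},
\[
\Big(\dashint_{R^{+}(\gamma)} w^{1-q'}\Big)^{q-1} e^{-a_R/\lambda} = \Big(\dashint_{R^{+}(\gamma)} e^{(q'-1)(u - a_R)/\lambda}\Big)^{q-1}
\]
is bounded uniformly in $R$ as long as $\lambda > (q'-1)/B$. Multiplying the two displays, the factors $e^{\pm a_R/\lambda}$ cancel and one gets $[w]_{A_q^{+}(\gamma)} < \infty$ for any $\lambda > \max\{1, q'-1\}/B$. Hence $u = -\lambda \log w$ with $w \in A_q^+(\gamma)$.

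The main obstacle, and the place requiring care, is the bookkeeping around the lag parameter $\gamma$ and the exponent $q$. In the first direction, one must make sure the same $\gamma$ (or an admissible one) works in both the $\PBMO^{+}$ condition and the $A_q^+$ condition; Proposition~\ref{prop:properties}(vii) and the remark after Definition~\ref{def:parabmo} guarantee the class of $\gamma$'s is insensitive, so this is routine but should be stated. In the second direction the genuine constraint is that $\lambda$ must be chosen large relative to $1/B$, where $B$ depends on $u$ through Lemma~\ref{lemma:JN} — this is exactly why the statement quantifies over $\lambda \in (0,\infty)$ rather than fixing $\lambda = 1$. One should also note that the choice works for \emph{every} $q > 1$ simultaneously (taking $\lambda$ large enough), consistent with the fact that the right-hand side of \eqref{eq:charofbmo} does not actually depend on $q$; this is worth a sentence, since it shows $\bigcup_\gamma A_q^+(\gamma)$ modulo logarithms and scaling is the same set for all $q$.
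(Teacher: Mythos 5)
Your proposal is correct and follows essentially the same route as the paper: the John--Nirenberg inequality (Lemma~\ref{lemma:JN}) gives the exponential integrability needed for the inclusion $\PBMO^{+}\subseteq\{-\lambda\log w\}$ (with $\lambda$ large relative to $1/B$, exactly as you argue), and Jensen's inequality handles the converse. The one genuine, and welcome, variation is your choice $a_R=-\tfrac{1}{q'-1}\log\dashint_{R^{+}(\gamma)}w^{1-q'}$ in the Jensen direction, which works for all $q>1$ at once, whereas the paper's choice $a_R=\log w_{R^{-}}$ forces the restriction $q\le 2$ and a subsequent duality step through $A_{q'}^{-}$ and Proposition~\ref{prop:parabmo} to cover $q>2$.
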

\begin{proof}
We abbreviate $R^{\pm}(\gamma)  = R^{\pm}$ even if $\gamma  \neq 0$. For $u \in \PBMO^{+}$, Lemma \ref{lemma:JN} gives $\epsilon > 0$ such that
\begin{align*}
\dashint_{R^{-}} e^{ -\epsilon u}   = e^{-a_R\epsilon }\dashint_{R^{-}} e^{\epsilon( a_R - u)} \leq e^{-a_R\epsilon}\dashint_{R^{-}} e^{ \epsilon(a_R -  u)^{+} } \leq C_- e^{-a_R \epsilon}
\end{align*}
and, for some $q < \infty$,
\begin{align*}
\dashint_{R^{+}} e^{ \epsilon u/(q-1)}   &= e^{ a_R \epsilon / (q-1)}\dashint_{R^{+}} e^{  ( u - a_R)\epsilon / (q-1)} \\
& \leq e^{ a_R\epsilon / (q-1)}\dashint_{R^{+}} e^{ (  u - a_R)^{+} \epsilon / (q-1) } \leq  C_+ e^{ a_R\epsilon / (q-1)}
\end{align*}
so $w := e^{-  u \epsilon } \in A_q^{+}$ and $u = -\epsilon^{-1} \cdot \log w$ as it was claimed.

To prove the other direction, take $w \in A_q^{+}$ with $q \leq 2$. Choose 
\[a_R =  \log w_{R^{-}}.\] 
Then by Jensen's inequality and the parabolic Muckenhoupt condition, we have
\begin{align*}
\exp \dashint_{R^{+}} (a_R -  \log w)^{+}   &\leq \dashint_{R^{+}} \exp (a_R -  \log w)^{+} \\
&\leq 1 + \dashint_{R^{+}} \exp \left(a_R - \frac{1}{1-q'} \log w^{1-q'}\right)  \\
&\leq 1 + \exp(a_R) \left( \dashint_{R^{+}} w^{1-q'} \right)^{q-1} \\
& = 1 + w_{R^{-}} \left( \dashint_{R^{+}} w^{1-q'} \right)^{q-1} 
 \leq 1  + C_{A_q^{+}}.
\end{align*} 
On the other hand, again by Jensen's inequality,
\begin{align*}
\exp \dashint_{R^{-}}(  \log w - a_R)^{+}   &\leq \dashint_{R^{-}} \exp ( \log w- a_R)^{+} \\
&\leq 1 + \dashint_{R^{-}} \exp ( \log w - a_R)   \\
& \leq 1 + \exp(- a_R) \dashint_{R^{-}} w  \\
& \leq 1 + w_{R^{-}}^{- 1}w_{R^{-}}  \leq 2.
\end{align*} 
This implies that
\begin{align*}
&\log(2(1+C_{A_q^{+}})) \\
&\geq \dashint_{R^{+}}(-  \log w-(-a_R))^{+}   + \dashint_{R^{-}}(-a_R - (-\log w))^{+}  ,
\end{align*}
and $u = -\log w \in \PBMO^{+}$. Applying the same argument for $A_{q'}^{-}$ with $q > 2$ shows that $- \log w^{1-q'} \in \PBMO^{-}$ and consequently Proposition \ref{prop:parabmo} implies that $- (q'-1) \log w \in \PBMO^{+}$.
\end{proof}

The following Coifman-Rochberg \cite{CR1980} type characterization for the parabolic $\BMO$ is the main result of this section. Observe, that it gives us a method to construct functions of parabolic bounded mean oscillation with prescribed singularities.
 
\begin{theorem}
\label{thm:coif-roch}
If $f\in\PBMO^{+}$ then there exist $\gamma \in (0,1)$, constants $\alpha,\beta >0$, a bounded function $b \in L^{\infty}$ and non-negative Borel measures $\mu$ and $ \nu$ such that
\[f = - \alpha \log M^{\gamma- }\mu + \beta \log M^{\gamma+} \nu + b. \]
Conversely, any $f$ of the form above with $\gamma = 0$ and $M^{- }\mu,M^{+} \nu < \infty$ belongs to $\PBMO^{+}$.
\end{theorem}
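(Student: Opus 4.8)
The plan is to assemble the representation from three ingredients already at our disposal: the factorization of parabolic Muckenhoupt weights (Theorem \ref{thm:factorization}), the description of $A_1^{\pm}$ weights as powers of maximal functions of measures (Lemma \ref{lemma:charofMA1}), and the logarithmic identification of $\PBMO^{+}$ with $A_q^{+}$ (Lemma \ref{lemma:charbmo}). This is the parabolic counterpart of the classical Coifman--Rochberg scheme, and almost all of the work is routing the earlier results through one another with consistent time lags.

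For the forward implication, let $f \in \PBMO^{+}$. By Lemma \ref{lemma:charbmo}, applied with $q=2$ and a fixed $\delta_0 \in (0,1)$, there are $\lambda>0$ and $w \in A_2^{+}(\delta_0)$ with $f = -\lambda\log w$. Choosing $\eta \in (0,\delta_0 2^{1-p})$, Theorem \ref{thm:factorization} yields $w = uv^{-1}$ with $u \in A_1^{+}(\eta)$ and $v \in A_1^{-}(\eta)$. Applying part (ii) of Lemma \ref{lemma:charofMA1} to $u$ and its time-reversed counterpart to $v$, and fixing a single $\gamma \in (\eta,1)$, we obtain exponents $\delta_1,\delta_2 \in (0,1)$, locally finite nonnegative Borel measures $\mu,\nu$ with $M^{\gamma-}\mu, M^{\gamma+}\nu < \infty$ almost everywhere, and measurable $K_1,K_2$ with $K_i,K_i^{-1} \in L^{\infty}$, such that $u = K_1(M^{\gamma-}\mu)^{\delta_1}$ and $v = K_2(M^{\gamma+}\nu)^{\delta_2}$. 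Taking logarithms of $w = uv^{-1}$ gives
\[
f = -\lambda\log u + \lambda\log v = -\lambda\delta_1\log M^{\gamma-}\mu + \lambda\delta_2\log M^{\gamma+}\nu + b,
\]
with $b = \lambda\log(K_2/K_1) \in L^{\infty}$. Setting $\alpha = \lambda\delta_1$ and $\beta = \lambda\delta_2$ finishes this direction; both are strictly positive because the reverse Hölder exponent furnished by Theorem \ref{thm:RHI} and used in Lemma \ref{lemma:charofMA1}(ii) is finite, forcing $\delta_1,\delta_2>0$.

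For the converse, suppose $f = -\alpha\log M^{-}\mu + \beta\log M^{+}\nu + b$ with $\gamma=0$, $M^{-}\mu, M^{+}\nu < \infty$ almost everywhere, and $b \in L^{\infty}$. Fix any $\delta \in (0,1)$. By Lemma \ref{lemma:charofMA1}(i), $(M^{-}\mu)^{\delta} \in A_1^{+}(0)$; since $0 < 2^{1-p}$, Proposition \ref{prop:MA1}(ii) gives $(M^{-}\mu)^{\delta} \in A_q^{+}(0)$ for all $q>1$, and Proposition \ref{prop:properties}(vii) then gives $(M^{-}\mu)^{\delta} \in A_q^{+}(\gamma')$ for all $\gamma' \in (0,1)$. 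Hence $-\alpha\log M^{-}\mu = -(\alpha/\delta)\log(M^{-}\mu)^{\delta} \in \PBMO^{+}$ by Lemma \ref{lemma:charbmo}. By the time-reversed forms of these statements $(M^{+}\nu)^{\delta} \in A_q^{-}(\gamma')$, so $-\log M^{+}\nu \in \PBMO^{-}$ and therefore $\log M^{+}\nu \in \PBMO^{+}$ by Proposition \ref{prop:parabmo}(ii). Finally $b \in L^{\infty} \subset \PBMO^{+}$ (take $a_R \equiv 0$), and Proposition \ref{prop:parabmo}(i), applied twice, shows that the sum of these three $\PBMO^{+}$ functions, namely $f$, lies in $\PBMO^{+}$.

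The one genuinely delicate point is the bookkeeping of time lags: one must check that the lag $\eta$ demanded by the factorization theorem can be chosen below $\delta_0 2^{1-p}$ yet above the input lag of Lemma \ref{lemma:charofMA1}(ii), and that Lemma \ref{lemma:charbmo} may be invoked at the single common lag $\gamma \in (0,1)$ produced at the end — which is possible precisely because that lemma holds for every lag in $(0,1)$. The time-reversed versions of Lemma \ref{lemma:charofMA1}, Proposition \ref{prop:MA1}, and Lemma \ref{lemma:charbmo} used above follow at once from the symmetry of all the relevant definitions under reversal of the temporal coordinate.
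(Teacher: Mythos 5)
Your proposal is correct and follows essentially the same route as the paper: Lemma \ref{lemma:charbmo} to write $f=-\lambda\log w$ with $w\in A_2^{+}$, Theorem \ref{thm:factorization} to factor $w=uv^{-1}$, and Lemma \ref{lemma:charofMA1} to express $u$ and $v$ as powers of maximal functions, with the converse run back through Lemma \ref{lemma:charofMA1}(i), Proposition \ref{prop:MA1} and Lemma \ref{lemma:charbmo}. Your write-up of the converse and of the time-lag bookkeeping is in fact more explicit than the paper's, which simply states that the other direction follows from Lemma \ref{lemma:charofMA1}.
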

\begin{proof}
Take first $f \in \PBMO^{+}$. By Lemma \ref{lemma:charbmo}
\[f = - C \log w\]
with $C > 0$ and $w \in  A_2^{+}$. By Theorem \ref{thm:factorization}, there are $u \in  A_1^{+}$ and $v \in  A_1^{-}$ satisfying the corresponding maximal function estimates \eqref{eq:MA1condition1} such that 
\[w = uv^{-1}.\]
By Lemma \ref{lemma:charofMA1}, there exist functions $K_u,K_v, K_u^{-1},K_v^{-1} \in L^{\infty}$ and non-negative Borel measures $\mu$ and $\nu$ such that
\[u = K_u (M^{\gamma-}\mu)^{\alpha}  \quad \textrm{and} \quad  v = K_v (M^{\gamma+}\nu)^{\beta}.\]
Hence $f$ is of the desired form. The other direction follows from Lemma \ref{lemma:charofMA1}.
\end{proof}

\section{Doubly nonlinear equation}
%In this concluding section, we discuss the connection between the parabolic Muckenhoupt classes and doubly nonlinear equation. By Harnack type estimates the supersolutions to PDEs of the $p$-Laplacian type belong Muckenhoupt class . The relation between parabolic weight classes and supersolutions to parabolic partial differential equations turns out to be very similar as we shall see in what follows. 

We begin with pointing out that the theory discussed here applies not only to  \eqref{intro:equation}
but also to the PDEs
\[
\frac{\partial(|u|^{p-2}u)}{\partial t} - \dive A(x,t,u,Du)  =0,  \quad1<p<\infty,
 \]
where $A$ satisfies the growth conditions
\[
A(x,t,u,Du) \cdot Du \geq C_0 \abs{Du}^{p} 
\]
and
\[
\abs{A(x,t,u,Du)} \leq C_1 \abs{Du}^{p-1}.
\]
See \cite{KK2007} and \cite{Saari2014} for more. 
For simplicity, we have chosen to focus on the prototype equation \eqref{intro:equation} here.  

\subsection{Supersolutions are weights}
We say that 
\[
v \in L^{p}_{loc}((-\infty,\infty);W_{\loc}^{1,p}(\R^{n+1}))
\] 
is a supersolution to \eqref{intro:equation} provided 
\[\int \left( |\nabla v|^{p-2} \nabla v \cdot \nabla \phi - |v|^{p-2} v \frac{\partial \phi}{\partial t} \right)  \geq 0 \]
for all non-negative $\phi \in C_0^{\infty}(\R^{n+1})$. If the reversed inequality is satisfied, we call $u$ a subsolution. If a function is both sub- and supersolution, it is a weak solution.

The definition above allows us to use the following a priori estimate, which is Lemma 6.1 in \cite{KK2007}. 
%It is an inequality for the distribution sets, and it implies that the logarithm of a positive supersolution belongs to the parabolic BMO. 
Similar results can also be found in \cite{Moser1964} and \cite{Trudinger1968}, but we emphasize that the following lemma applies to the full range $1<p<\infty$ instead of just $p=2$.

\begin{lemma}[Kinnunen--Kuusi \cite{KK2007}]
\label{lemma:kinnunenkuusi}
Suppose that $v>0$ is a supersolution of the doubly nonlinear equation in $\sigma R$ where $\sigma > 1$ and $R$ is a parabolic rectangle. Then there are constants $C = C(p,\sigma,n)$, $C'=C'(p,\sigma ,n)$ and $\beta = \beta(R)$ such that
\[ \abs{ R^{-}  \cap \{ \log v > \lambda + \beta + C' \} } \leq \frac{C}{\lambda^{p-1}} \abs{R^{-} }  \]
and
\[ \abs{ R^{+}  \cap \{ \log v < -\lambda + \beta - C' \} } \leq \frac{C}{\lambda^{p-1}} \abs{R^{+} }\]
for all $\lambda > 0$.
\end{lemma}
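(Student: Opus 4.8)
The statement is Moser's logarithmic lemma for positive supersolutions, adapted to the doubly nonlinear equation, and I would prove it along the classical scheme. \textbf{Step 1 (energy estimate).} Test the supersolution inequality with $\phi = v^{1-p}\eta^{p}$, where $\eta = \eta(x)$ is a purely spatial cutoff with $\eta \equiv 1$ on the spatial cube of $R$, $\supp\eta$ contained in the spatial cube of $\sigma R$, and $\abs{\nabla\eta}\lesssim ((\sigma-1)l)^{-1}$. Since $\eta$ does not depend on $t$, the parabolic term reduces to $-\int v^{p-1}\partial_t\phi = (p-1)\int\partial_t(\log v)\,\eta^{p}$, which is \emph{linear} in $\partial_t(\log v)$; this is precisely the feature that makes the doubly nonlinear equation behave, for this estimate, like the heat equation. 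Writing $w = \log v$, the diffusion term produces $(1-p)\abs{\nabla w}^{p}\eta^{p}$ together with a lower-order term that is absorbed by Young's inequality, so after integrating in time over an interval $(t_1,t_2)$ one arrives at a Caccioppoli-type estimate of the (schematic) form
\[
\int_{t_1}^{t_2}\int_{Q}\abs{\nabla w}^{p}\dx\dt
+ (p-1)\int_{Q}\big(w(x,t_1)-w(x,t_2)\big)\dx
\;\lesssim\; \frac{(t_2-t_1)\abs{Q}}{((\sigma-1)l)^{p}}.
\]

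\textbf{Step 2 (the constant $\beta$).} Dropping the nonnegative gradient term shows that $W(t):=\dashint_{Q}w(x,t)\dx$ satisfies $W(t_1)-W(t_2)\lesssim (t_2-t_1)\,l^{-p}$ whenever $t_1<t_2$; equivalently $t\mapsto W(t)+C_{*}\,t\,l^{-p}$ is nondecreasing. Choosing $\beta=\beta(R)$ to be $W$ evaluated at the central time of $R$ and using that the time span of $R$ is comparable to $l^{p}$, one obtains $W(s)\le\beta+C'$ on the temporal projection of $R^{-}$ and $W(s)\ge\beta-C'$ on that of $R^{+}$, with $C'=C'(p,\sigma,n)$.

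\textbf{Step 3 (from averages to distribution functions) and the main obstacle.} Keeping the gradient term and combining it with the slicewise spatial Poincaré inequality $\dashint_{Q}\abs{w(\cdot,s)-W(s)}\dx\lesssim l\,\dashint_{Q}\abs{\nabla w(\cdot,s)}\dx$ controls, after integrating in $s$ and applying Hölder's inequality, the quantity $\dashint_{R^{-}}\abs{w-W(s)}$. On $R^{-}$ one then writes $w-\beta-C'\le (w-W(s))+(W(s)-\beta-C')$ with the second summand $\le 0$, so $\dashint_{R^{-}}(w-\beta-C')^{+}\lesssim 1$, and Chebyshev's inequality yields the claimed bound for $\abs{R^{-}\cap\{\log v>\lambda+\beta+C'\}}$; the estimate on $R^{+}$ follows by the symmetric argument applied to $-w$. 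The delicate point is that the right-hand side of the energy estimate in Step 1 retains the time-boundary term $\int_{Q}(w(\cdot,t_2)-w(\cdot,t_1))\dx$, which is not controlled by the almost-monotonicity of $W$ alone; extracting the full $L^{p}$ gradient information — so as to reach the sharp decay $\lambda^{-(p-1)}$ rather than merely $\lambda^{-1}$ — requires iterating the estimate over a chain of time slices inside $\sigma R$. This is exactly the technical content of Lemma~6.1 in \cite{KK2007}, to which I would ultimately appeal for the remaining details.
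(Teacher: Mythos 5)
The paper offers no proof of this lemma: it is imported verbatim as Lemma 6.1 of \cite{KK2007} (with the remark that the technical lower bound $v>\rho>0$ assumed there can be removed via \cite{IMM2014}), so citing that reference is exactly what the paper does. Your sketch of the underlying Moser-type argument is sound and correctly isolates the real difficulty — the uncontrolled time-boundary term $\int_Q(w(\cdot,t_2)-w(\cdot,t_1))$ and the passage from the $L^1$ (Chebyshev) bound to the sharp $\lambda^{-(p-1)}$ decay, both handled in \cite{KK2007} by the slicewise differential-inequality argument — and since you too ultimately appeal to that reference, your treatment is consistent with, indeed more informative than, the paper's.
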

\begin{remark}
There is a technical assumption $v > \rho > 0$ in  \cite{KK2007}. 
However, this assumption can be removed, see \cite{IMM2014}. Indeed, Lemma 2.3 of \cite{IMM2014} improves the inequality (3.1) of \cite{KK2007} as to make the proof of the above lemma work with general $v > 0$ in the case of \eqref{intro:equation} or more general parabolic quasiminimizers.
\end{remark}

Let $v$ be a positive supersolution and set  $u = -\log v$. We apply Lemma \ref{lemma:kinnunenkuusi} together with Cavalieri's principle to obtain
\[  \dashint_{  R^{+}} (   u  -a_R)_{+}^{b}   +  \dashint_{  R^{-}} (a_R- u)_{+}^{b}  < C(p,\sigma,\gamma,n) \]
with $b =  \min\{(p-1)/2,1\}$. A general form of the John-Nirenberg inequality from \cite{Aimar1988} together with its local-to-global properties from \cite{Saari2014} can be used to obtain
\[  \dashint_{  R^{+}(\gamma)} (u-a_R)_{+}    +  \dashint_{  R^{-}(\gamma)} (a_R- u)_{+}  < C(p,\sigma,\gamma,n). \]
Hence $u = - \log v$ belongs to $\PBMO^{+}$ in the sense of Definition \ref{def:parabmo}. The computations required in this passage are carried out in detail in Lemma 6.3 of \cite{Saari2014}.
We collect the results into the following proposition, whose content, up to notation, is folklore by now.

\begin{proposition}\label{prop:logbmo}
Let $v > 0$ be a supersolution to \eqref{intro:equation} in $\R^{n+1}$. Then
\[
 u = -\log v \in \PBMO^{+}
\]
In addition, $v \in \cap_{q > 1} A_q^{+}$.
\end{proposition}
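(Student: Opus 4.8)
The plan is to derive both assertions from the a priori estimate for positive supersolutions (Lemma~\ref{lemma:kinnunenkuusi}) together with the $\PBMO^{+}$ and Muckenhoupt theory of the previous sections, so that the proof is essentially a transcription of the discussion preceding the statement. Fix a parabolic rectangle $R$. Because $v>0$ is a supersolution on all of $\R^{n+1}$, Lemma~\ref{lemma:kinnunenkuusi} applies on $\sigma R$ for a fixed $\sigma>1$ with constants $C,C',\sigma$ independent of $R$; setting $u=-\log v$ and choosing $a_{R}$ so as to absorb the shift $\beta(R)\pm C'$, the two inclusions of that lemma read
\[
\bigl|R^{-}\cap\{(a_{R}-u)^{+}>\lambda\}\bigr|\lesssim \lambda^{-(p-1)}\abs{R^{-}},\qquad \bigl|R^{+}\cap\{(u-a_{R})^{+}>\lambda\}\bigr|\lesssim \lambda^{-(p-1)}\abs{R^{+}}
\]
for every $\lambda>0$. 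Integrating these tails against $\lambda^{b-1}$ (Cavalieri's principle) converges whenever $b<p-1$, so with $b=\min\{(p-1)/2,1\}$, admissible for all $p>1$, one obtains
\[
\dashint_{R^{+}}(u-a_{R})_{+}^{b}+\dashint_{R^{-}}(a_{R}-u)_{+}^{b}\le C(p,\sigma,n),
\]
a bound uniform in $R$. If one wishes to dispense with the auxiliary hypothesis $v>\rho>0$ of \cite{KK2007}, one inserts \cite{IMM2014} here, as noted in the remark following Lemma~\ref{lemma:kinnunenkuusi}.

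The second step, which I expect to be the only genuinely delicate point, is to raise the oscillation exponent from $b$ to $1$ and to produce a time lag. The inequality just displayed states that $u$ lies in the parabolic space of bounded $b$-mean oscillation over parabolic rectangles \emph{without} a lag; applying the general John--Nirenberg inequality of Aimar~\cite{Aimar1988}, together with the local-to-global principle and the lag-insertion results of \cite{Saari2014}, upgrades this to the defining condition of $\PBMO^{+}$ (Definition~\ref{def:parabmo}) for some $\gamma\in(0,1)$, i.e. $u=-\log v\in\PBMO^{+}$. Two features make this nontrivial: the ambient parabolic measure space does not carry the usual dyadic structure adapted to the rectangles $R^{\pm}(\gamma)$, and the self-improvement must simultaneously manufacture the lag $\gamma>0$ that is indispensable for the parabolic John--Nirenberg inequality (Lemma~\ref{lemma:JN}). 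The detailed bookkeeping for this passage is Lemma~6.3 of \cite{Saari2014}, which I would simply cite.

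For the remaining claim $v\in\bigcap_{q>1}A_{q}^{+}$, once $-\log v\in\PBMO^{+}$ is known the argument is formal: one may invoke Lemma~\ref{lemma:charbmo} directly, or, more transparently, rerun the ``$u\in\PBMO^{+}\Rightarrow$ Muckenhoupt'' computation from the proof of Lemma~\ref{lemma:charbmo}. Namely, the parabolic John--Nirenberg inequality (Lemma~\ref{lemma:JN}) yields exponential integrability of $(u-a_{R})^{+}$ over $R^{+}(\gamma)$ and of $(a_{R}-u)^{+}$ over $R^{-}(\gamma)$ with constants uniform in $R$, and multiplying the resulting bounds for $\dashint_{R^{-}(\gamma)}v$ and $(\dashint_{R^{+}(\gamma)}v^{1-q'})^{q-1}$ produces a finite $[v]_{A_{q}^{+}(\gamma)}$; the forward-in-time geometry of $A_{q}^{+}(\gamma)$ — these weights may grow arbitrarily fast in the temporal direction — is precisely what keeps this compatible with an unbounded $v$, and the hypothesis that $v$ is a \emph{global} supersolution is what makes every constant above uniform across scales. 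In short, all the substance is in Step~2; Steps~1 and~3 are routine once Lemmas~\ref{lemma:kinnunenkuusi}, \ref{lemma:JN} and~\ref{lemma:charbmo} are at hand.
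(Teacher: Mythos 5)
Your treatment of the first assertion follows the paper's route exactly: Lemma \ref{lemma:kinnunenkuusi} plus Cavalieri's principle gives the uniform bound on the $b$-mean oscillation with $b=\min\{(p-1)/2,1\}$ (your convergence check $0<b<p-1$ is right), and the upgrade to exponent $1$ with a time lag is delegated to the John--Nirenberg machinery of \cite{Aimar1988} and Lemma 6.3 of \cite{Saari2014}. That part is correct.

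The gap is in your third step. From $u=-\log v\in\PBMO^{+}$, Lemma \ref{lemma:charbmo} (equivalently the John--Nirenberg inequality, Lemma \ref{lemma:JN}) only yields exponential integrability at the \emph{small} exponent $\epsilon$ dictated by the constant $B$ in Lemma \ref{lemma:JN}: one gets $\dashint_{R^{-}(\gamma)}e^{\epsilon(a_R-u)^{+}}\le C$ and hence $v^{\epsilon}\in A_q^{+}$, i.e.\ membership of a small power of $v$, not of $v$ itself. There is no self-improvement raising $\epsilon$ to $1$: a $\PBMO^{+}$ function $u$ need not even have $e^{-u}$ locally integrable (already in the elliptic model, $-2\log\abs{x}\in\BMO(\R)$ while $\abs{x}^{-2}\notin L^{1}_{loc}$). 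The paper is explicit on this point in the paragraph following the proposition: the BMO membership only shows that \emph{some power} of $v$ satisfies a local $A_2^{+}(\gamma)$ condition, and the full claim $v\in\bigcap_{q>1}A_q^{+}$ is obtained by returning to the PDE and invoking the weak Harnack estimate for positive supersolutions from \cite{KK2007}, with the refinement of \cite{IMM2014} needed to drop the auxiliary hypothesis $v>\rho>0$. That extra PDE input is what controls the factor $\dashint_{R^{-}(\gamma)}v$ at exponent one uniformly in $R$; it cannot be recovered from the oscillation estimates alone, so your Step 3 as written does not close.
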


\begin{remark}
This gives a way to construct nontrivial examples of the parabolic Muckenhoupt weights and parabolic $\BMO$ functions.
\end{remark}

Since $\log v \in \PBMO^{-}$, we have that some power of the positive supersolution $w$ satisfies a local $A_2^{+}(\gamma)$ condition. This follows from Lemma \ref{lemma:charbmo}. However, working a bit more with the PDE, it is possible to prove a weak Harnack estimate which implies the improved weight condition stated in the above proposition. This has been done in \cite{KK2007}, but the refinement provided in \cite{IMM2014} is again needed in order to cover all positive supersolutions. 

%The conditions appearing in the above proposition are localized to a space time cylinder, but in the case of the full space $\Omega = \mathbb{R}^{n}$ and $T = \infty$, we recover Definition \ref{def:MAq} and Definition \ref{def:parabmo} that are studied in the present paper. 

%This class is defined as in Definition \ref{def:parabmo} with the difference that the supremum is taken only over parabolic rectangles contained in $\Omega \times (-T,T)$. The result holds in the case $\Omega = \mathbb{R}^{n}$ and $T = \infty$ as well. 

\subsection{Applications}
The previous proposition asserts that the definitions of parabolic weights and parabolic BMO are correct from the point of view of doubly nonlinear equation. These properties can be used to deduce two interesting results, the second one of which is new. 
The first one is a global integrability result for supersolutions, see Theorem 6.5 from \cite{Saari2014}.
%\begin{theorem}
%Let $v> 0$ be a supersolution to \eqref{intro:equation} in $Q \times (0,T)$ where $Q \subset \mathbb{R}^{n}$ is a cube. Then for each $\delta > 0$ there is $\epsilon > 0$ depending only on $p$, $n$ and $\delta$ such that
%\begin{equation*}
%\int_{ Q \times (0,T- \delta)} v ^{\epsilon}  < \infty. 
%\end{equation*}
%\end{theorem}
The second application of the parabolic theory of weights is related to singularities of supersolutions. It follows from Proposition \ref{prop:logbmo} and Theorem \ref{thm:coif-roch}. 
In qualitative terms, the following theorem tells quite explicitly what kind of functions the generic positive supersolutions are. 
\begin{theorem}
Let $v> 0$ be a supersolution to  \eqref{intro:equation} in $\mathbb{R}^{n+1}$. Then there are positive Borel measures $\nu$ and $\mu$ with
\[M^{\gamma -} \nu < \infty \quad \textrm{and} \quad M^{\gamma +} \mu < \infty , \]
numbers $\alpha , \beta > 0$, and a positive function $b$ with $b, b^{-1} \in L^{\infty}(\mathbb{R}^{n+1})$ so that 
\[v =  b \frac{(M^{\gamma -} \nu)^{\alpha}}{(M^{\gamma +} \mu)^{\beta}} .\]
\end{theorem}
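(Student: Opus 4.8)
The plan is to combine the results already assembled in the paper: Proposition \ref{prop:logbmo} identifies $-\log v$ as an element of $\PBMO^{+}$, and Theorem \ref{thm:coif-roch} supplies a representation of an arbitrary $\PBMO^{+}$ function as a difference of logarithms of maximal functions of Borel measures plus a bounded error. Exponentiating such a representation should yield exactly the claimed multiplicative form for $v$ itself.

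\medskip

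First I would invoke Proposition \ref{prop:logbmo} to get $f := -\log v \in \PBMO^{+}$. Then I would apply Theorem \ref{thm:coif-roch} to $f$: there exist $\gamma \in (0,1)$, constants $\alpha, \beta > 0$, a bounded function $\widetilde b \in L^{\infty}$, and non-negative Borel measures $\mu, \nu$ with $M^{\gamma-}\mu, M^{\gamma+}\nu < \infty$ (these finiteness statements are part of Lemma \ref{lemma:charofMA1}, which underlies the theorem) such that
\[
-\log v = -\alpha \log M^{\gamma-}\mu + \beta \log M^{\gamma+}\nu + \widetilde b .
\]
Here I must be slightly careful about which measure plays which role: tracing through the proof of Theorem \ref{thm:coif-roch}, the first maximal function $M^{\gamma-}\mu$ comes from the $A_1^{+}$ factor $u$ and the second $M^{\gamma+}\nu$ from the $A_1^{-}$ factor $v$ in the factorization $w = uv^{-1}$ with $w$ a power of the supersolution. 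To match the statement of the final theorem, I would simply relabel the two measures, writing $\nu$ for the measure attached to $M^{\gamma-}$ and $\mu$ for the one attached to $M^{\gamma+}$, so that $M^{\gamma-}\nu < \infty$ and $M^{\gamma+}\mu < \infty$.

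\medskip

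Next, exponentiate. Writing $b := e^{-\widetilde b}$, which satisfies $b, b^{-1} \in L^{\infty}(\mathbb{R}^{n+1})$ and $b > 0$ since $\widetilde b \in L^{\infty}$, we obtain
\[
v = e^{-f} = b \, (M^{\gamma-}\nu)^{\alpha} (M^{\gamma+}\mu)^{-\beta} = b \, \frac{(M^{\gamma-}\nu)^{\alpha}}{(M^{\gamma+}\mu)^{\beta}},
\]
which is precisely the asserted representation. The positivity of $b$ and the two-sided $L^{\infty}$ bound are immediate from those of $\widetilde b$; the positivity and finiteness a.e.\ of the maximal functions guarantee the right-hand side is a well-defined positive function.

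\medskip

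There is essentially no serious obstacle here — the theorem is a corollary obtained by feeding Proposition \ref{prop:logbmo} into Theorem \ref{thm:coif-roch} and exponentiating. The only point demanding a little attention is bookkeeping: making sure the orientation of the time lag (the sign in $M^{\gamma\pm}$) is consistent between the factorization $w = uv^{1-q}$ used in Theorem \ref{thm:factorization}, the characterization of $A_1^{\pm}$ weights in Lemma \ref{lemma:charofMA1}, and the final statement, and correspondingly matching the names $\mu$ and $\nu$ to the right maximal operators. One should also note that $-\log v \in \PBMO^{+}$ already encodes $\log v \in \PBMO^{-}$, so both one-sided maximal operators genuinely appear, as they must for the quotient structure to reflect the two-sided oscillation control of a supersolution.
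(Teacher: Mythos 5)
Your proposal is correct and follows exactly the route the paper intends: the theorem is stated there as a direct consequence of Proposition \ref{prop:logbmo} and Theorem \ref{thm:coif-roch}, and your exponentiation step, sign bookkeeping, and relabelling of $\mu$ and $\nu$ fill in precisely the details the authors leave implicit. Nothing is missing.
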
 


\begin{thebibliography}{99}

\bibitem{Aimar1988}
H. Aimar, \emph{Elliptic and parabolic BMO and Harnack's inequality}, Trans. Amer. Math. Soc. {\bf 306} (1988), 265--276.

\bibitem{AC1998}
H. Aimar and R. Crescimbeni, \emph{On one-sided BMO and Lipschitz functions}, Ann. Sc. Norm. Sup. Pisa {\bf 27} (1998), 437-456.

\bibitem{AFMR1997}
H. Aimar, L. Forzani and F.J. Mart\'in-Reyes, \emph{On weighted inequalities for singular integrals}, Proc. Amer. Math. Soc. {\bf 125} (1997), 2057--2064.

\bibitem{Berkovits2011}
L. Berkovits, \emph{Parabolic Muckenhoupt weights in the Euclidean space}, J. Math. Anal. Appl. {\bf 379} (2011), 524--537.

\bibitem{CJR1983}
R. Coifman, P.W. Jones and J.L. Rubio de Francia, \emph{Constructive decomposition of BMO functions and factorization of $A_p$ weights}, Proc. Amer. Math. Soc. {\bf 87} (1983), 675--676. 

\bibitem{CR1980}
R. Coifman and R. Rochberg, \emph{Another characterization of BMO}, Proc. Amer. Math. Soc. {\bf 79} (1980), 249--254. 

\bibitem{CUNO1995}
D. Cruz-Uribe SFO, C.J. Neugebauer and V. Olesen, \emph{The one-sided minimal operator and the one-sided reverse H\"older inequality}, Studia Math. {\bf 116} (1995), 255--270.

\bibitem{FG1985}
E.B. Fabes and N. Garofalo, \emph{Parabolic B.M.O. and Harnack's inequality}, Proc. Amer. Math. Soc. {\bf 95} (1985), 63--69. 

\bibitem{FMRO2011}
L. Forzani, F.J. Mart\'in-Reyes and S. Ombrosi, \emph{Weighted inequalities for the two-dimensional one-sided Hardy-Littlewood maximal function}, Trans. Amer. Math. Soc. {\bf 363} (2011), 1699--1719. 

\bibitem{GR1985}
J. Garc\'ia-Cuerva and J.L. Rubio de Francia, \emph{Weighted Norm Inequalities and Related Topics}, North Holland, Amsterdam, 1985.

\bibitem{GV2006}
U. Gianazza and V. Vespri, \emph{A Harnack inequality for solutions of doubly nonlinear parabolic equations}, J. Appl. Funct. Anal. {\bf 1} (2006), 271--284.

\bibitem{Jones1980}
P.W. Jones, \emph{Factorization of $A_p$ weights}, Ann. of Math. {\bf 111} (1980), 511--530.

\bibitem{KK2007}
J. Kinnunen and T. Kuusi, \emph{Local behaviour of solutions to doubly nonlinear parabolic equations}, Math. Ann. {\bf 337} (2007), 705--728.

\bibitem{KS2016}
J. Kinnunen and O. Saari, \emph{On weights satisfying parabolic Muckenhoupt conditions},
Nonlinear Anal. {\bf 131} (2016), 289--299.

\bibitem{KSU}
T. Kuusi, J. Siljander and J.M. Urbano, \emph{Local H\"older continuity for doubly nonlinear parabolic equations},  Indiana Univ. Math. J. \textbf{61} (2012), 399--430.

\bibitem{LO2010}
A. Lerner and S. Ombrosi, \emph{A boundedness criterion for general maximal operators}, Publ. Mat. { \bf 54} (2010), 53--71.

\bibitem{IMM2014}
P.-A. Ivert, N. Marola and M. Masson, \emph{Energy estimates for variational minimizers of a doubly nonlinear equation on metric measure spaces}, Ann. Acad. Sci. Fenn. Math. \textbf{39} (2014), 711--719.

\bibitem{Martin1993}
F.J. Mart\'\i n-Reyes, \emph{New proofs of weighted inequalities for the one-sided Hardy-Littlewood maximal functions}, Proc. Amer. Math. Soc. {\bf 117} (1993), 691--698.

\bibitem{MOST1990}
F.J. Mart\'\i n-Reyes, P. Ortega Salvador and A. de la Torre, \emph{Weighted inequalities for one-sided maximal functions}, Trans. Amer. Math. Soc. {\bf 319} (1990), 517--534.

\bibitem{MRPT1993}
F.J. Mart\'\i n-Reyes, L. Pick and A. de la Torre,
\emph{ $A^+_\infty$ condition}, Can. J. Math. {\bf 45} (1993), 1231--1244.

%\bibitem{MRT1987}
%F. J. Mart\'\i n-Reyes and A. de la Torre, \emph{Weighted weak type inequalities for the ergodic maximal function and the pointwise ergodic theorem}, 
%Studia Math. \textbf{87} (1987), 33--46. 

\bibitem{MRT1992}
F.J. Mart\'\i n-Reyes and A. de la Torre, \emph{Two weight norm inequalities for one-sided fractional maximal operators}, Proc. Amer. Math. Soc. {\bf 117} (1992), 483--489.

\bibitem{MRT1994}
F.J. Mart\'\i n-Reyes and A. de la Torre, \emph{One-sided BMO spaces}, J. London Math. Soc. {\bf 49} (1994), 529-542.

\bibitem{Moser1961}
J. Moser, \emph{On Harnack's theorem for elliptic differential equations}, Comm. Pure Appl. Math. \textbf{14} (1961) 577--591. 

\bibitem{Moser1964}
J. Moser, \emph{A Harnack inequality for parabolic differential equations}, Comm. Pure Appl. Math. {\bf 17} (1964), 101--134. 
 
\bibitem{Moser1967}
J. Moser, \emph{Correction to: "A Harnack inequality for parabolic differential equations"}, Comm. Pure Appl. Math. {\bf 20} (1967), 231--236.

\bibitem{Ombrosi2005}
S. Ombrosi, \emph{Weak weighted inequalities for a dyadic one-sided maximal function in $\mathbb R^n$}, Proc. Amer. Math. Soc. {\bf 133} (2005), 1769--1775.

\bibitem{Saari2014}
O. Saari, \emph{Parabolic $\BMO$ and global integrability of supersolutions to doubly nonlinear parabolic equations}, to appear in Rev. Mat. Iberoam, available at arXiv:1408.5760 (2014). 

\bibitem{Sawyer1986}
E. Sawyer, \emph{Weighted inequalities for the one-sided Hardy-Littlewood maximal functions}, Trans. Amer. Math. Soc. {\bf 297} (1986), 53--61.  
 
\bibitem{Trudinger1968}
N.S. Trudinger, \emph{Pointwise estimates and quasilinear parabolic equations}, Comm. Pure Appl. Math. {\bf 21} (1968), 205--226. 

\bibitem{V1992}
V. Vespri, \emph{On the local behaviour of solutions of a certain class of doubly nonlinear parabolic equations}, 
Manuscripta Math. {\bf 75} (1992), 65--80.

\end{thebibliography}
\end{document}